\providecommand\sslash{\mathbin{/\mkern-5.5mu/}}
\tikzstyle{startstop} = [rectangle, rounded corners, minimum width=3cm, minimum height=1cm,text centered, draw=black, fill=red!30]
\tikzstyle{io} = [trapezium, trapezium left angle=70, trapezium right angle=110, minimum width=3cm, minimum height=1cm, text centered, draw=black, fill=blue!30]
\tikzstyle{process} = [rectangle, minimum width=3cm, minimum height=1cm, text centered, draw=black, fill=orange!30]
\tikzstyle{decision} = [diamond, minimum width=3cm, minimum height=1cm, text centered, draw=black, fill=green!30]
\tikzstyle{arrow} = [thick,->,>=stealth]
\definecolor{lightred}{HTML}{ff4d4d}
\definecolor{lightblue}{HTML}{1F88CD}
\definecolor{lightgrey}{HTML}{727272}
\definecolor{lightblue2}{HTML}{009EC1}
\definecolor{mypink}{HTML}{FD00B0}
\tikzset{
>=stealth',
  punktchain/.style={
    rectangle,
    rounded corners,
    draw=black, thick,
    minimum height=3em,
    text centered,
    on chain},
  line/.style={draw, thick, <-},
  element/.style={
    tape,
    top color=white,
    bottom color=blue!50!black!60!,
    minimum width=8em,
    draw=blue!40!black!90, very thick,
    text width=10em,
    minimum height=3.5em,
    text centered,
    on chain},
  every join/.style={->, thick,shorten >=1pt},
  decoration={brace},
  tuborg/.style={decorate},
  tubnode/.style={midway, right=2pt},
}
\newtheorem*{rep@theorem}{\rep@title}
\newcommand{\newreptheorem}[2]{%
\newenvironment{rep#1}[1]{%
 \def\rep@title{#2 \ref{##1}}%
 \begin{rep@theorem}}%
 {\end{rep@theorem}}}
\newtheorem{theorem}{Theorem}[section]
\newtheorem{proposition}[theorem]{Proposition}
\newtheorem{lemma}[theorem]{Lemma}
\newtheorem{corollary}[theorem]{Corollary}
\newtheorem{conjecture}[theorem]{Conjecture}
\newtheorem{thm-int}{Theorem}
\theoremstyle{definition}
\newtheorem{Def-s}[theorem]{Definition}
\newtheorem{definition}[theorem]{Definition}
\newtheorem{remark}[theorem]{Remark}
\newcommand{\ignore}[1]{}
\newcommand{\xra}{\xrightarrow}
\newcommand{\wt}{\widetilde}
\newcommand{\ZZ}{\mathbb{Z}}
\newcommand{\QQ}{\mathbb{Q}}
\newcommand{\CC}{\mathbb{C}}
\newcommand{\PP}{\mathbb{P}}
\newcommand{\reg}{\mathrm{reg}}
\newcommand{\codim}{\mathrm{codim}}
\newcommand{\pr}{\mathrm{pr}}
\DeclareMathOperator{\Aut}{Aut}
\DeclareMathOperator{\im}{im}
\DeclareMathOperator{\Sym}{Sym}
\DeclareMathOperator{\Proj}{Proj}
\DeclareMathOperator{\Hom}{Hom}
\DeclareMathOperator{\Spec}{Spec}
\DeclareMathOperator{\Pic}{Pic}
\newcommand{\cX}{\mathcal{X}}
\newcommand{\cY}{\mathcal{Y}}
\newcommand{\cZ}{\mathcal{Z}}
\newcommand{\cC}{\mathcal{C}}
\newcommand{\cE}{\mathcal{E}}
\newcommand{\cH}{\mathcal{H}}
\newcommand{\cI}{\mathcal{I}}
\newcommand{\cJ}{\mathcal{J}}
\newcommand{\cP}{\mathcal{P}}
\newcommand{\cM}{\mathcal{M}}
\newcommand{\rH}{\mathrm{H}}
\newcommand{\cO}{\mathcal{O}}
\newcommand{\bP}{\mathbb{P}}
\newcommand{\bZ}{\mathbb{Z}}
\newcommand{\bQ}{\mathbb{Q}}
\newcommand{\SL}{\mathrm{SL}}
\newcommand{\PGL}{\mathrm{PGL}}
\newcommand{\bG}{\mathbb{G}}
\newcommand{\bC}{\mathbb{C}}
\newcommand{\bfP}{\mathbf{P}}
\newcommand{\hW}{\widehat{W}}
\newcommand{\bfM}{\mathbf{M}}
\newcommand{\tU}{\widetilde{U}}
\newcommand{\obfM}{\overline{\mathbf{M}}}
\newcommand{\sm}{\mathrm{sm}}
\DeclareMathOperator{\oh}{\mathcal{O}}
\begin{document}

\title[Irreducible symplectic varieties with a large second Betti number]{Irreducible symplectic varieties with \\ a large second Betti number}
\subjclass[2020]{14J42 (Primary); 14E30, 14J32 (Secondary)}
\keywords{Irreducible symplectic varieties, Hyper-K\"ahler varieties, Lagrangian fibrations, Minimal model program}

\author{Yuchen Liu}

\address{Department of Mathematics, Northwestern University, Evanston, IL 60208, USA}
\email{yuchenl@northwestern.edu}

\author{Zhiyu Liu}

\address{School of Mathematical Sciences, Zhejiang University, Hangzhou, Zhejiang Province, 310058, P. R. China}
\email{jasonlzy0617@gmail.com}

\author{Chenyang Xu}

\address{Department of Mathematics, Princeton University, Princeton, NJ 08544, USA}
\email{chenyang@princeton.edu}

\begin{abstract}
We prove a general result on the existence of irreducible symplectic compactifications of non-compact Lagrangian fibrations. As an application, we show that the relative Jacobian fibration of cubic fivefolds containing a fixed cubic fourfold can be compactified by a $\mathbb{Q}$-factorial terminal irreducible symplectic variety with the second Betti number at least 24, and admits a Lagrangian fibration whose base is a weighted projective space. In particular, it belongs to a new deformation type of irreducible symplectic varieties.

\end{abstract}

\vspace{-1em}
\maketitle

\setcounter{tocdepth}{1}
\tableofcontents

\section{Introduction}

Irreducible symplectic manifolds, also known as hyper-K\"ahler manifolds, are one of three building blocks of compact K\"ahler manifolds with trivial first Chern class according to the Beauville--Bogomolov decomposition
theorem \cite{beauville:trivial-c1}.

In view of the birational classification of varieties with Kodaira dimension zero, it is also natural to develop a singular version of the decomposition theorem. This is achieved by the efforts of many papers, such as \cite{greb:klt-varieties,greb:singular-space,druel:decomp-dim-5,druel:smoothable-decomp,cam,horing:decomposition,bakker:alg-approx}, where the role of irreducible symplectic manifolds in the smooth case is replaced by \emph{irreducible symplectic varieties} (cf.~Definition \ref{def-sym}).

Despite their rich geometry, there have been only a relatively limited number of approaches to construct irreducible symplectic varieties. 
In particular, the second Betti number $b_2$ satisfies $b_2\leq 24$ for any known example, and the only examples with $b_2=24$ are deformation equivalent to O’Grady’s 10-dimensional variety \cite{ogrady:og10}. A strategy is proposed in \cite[Section 7.4]{markman:rank-1-obstruction} to find out new irreducible symplectic varieties with $b_2\geq 24$. More precisely, based on the moduli theory of symplectic varieties \cite{BL22}, Markman explains that $\QQ$-factorial terminal irreducible symplectic compactifications of relative Picard fibrations of certain Lagrangian subvarieties in irreducible symplectic manifolds should lead to such new examples.

The result of \cite{lsv} (see also \cite{voisin:twist-jac,Sacca23}) provides strong evidence for Markman’s expectation: the relative Jacobian fibration of smooth cubic threefolds contained in a cubic fourfold, which is also a Picard fibration of Lagrangian surfaces in the Fano variety of lines, is compactified by an irreducible symplectic manifold. They also show that these irreducible symplectic manifolds are deformation equivalent to O'Grady's 10-dimensional variety. However, to construct the compactifications, the method of \cite{lsv} is to investigate explicit degenerations of Prym varieties, which is restrictive when applied to other situations.

In this paper, we establish a general criterion for the existence of irreducible symplectic compactifications and apply it to produce examples with $b_2\geq 24$ that are not deformation equivalent to any previous examples.

\subsection{Existence}

Recall that a pair $(X,\sigma)$ is a \emph{holomorphic symplectic manifold} if $X$ is a smooth quasi-projective variety and $\sigma$ is a holomorphic symplectic form on $X$. Our first main theorem is the following existence result of compactifications.


\begin{theorem}[{Theorem \ref{thm-irr-sym}}]\label{thm-intro-sym}
Let $(X_0,\sigma_0)$ be a holomorphic symplectic manifold with a dominant morphism $$\pi_0\colon X_0\to B$$ to a normal projective variety $B$. Assume that $\sigma_0$ extends to a holomorphic $2$-form on a smooth compactification of $\pi_0$. If

    \begin{enumerate}[(1)]
       \item $\mathrm{codim}_{B}(B\setminus U)\geq 2$ for $U:=\pi_0(X_0)$, and
    
        \item very general fibers of $\pi_0$ are projective simple Lagrangian tori and $\pi_0$ is not isotrivial\footnote{We say $\pi_0$ is not isotrivial if there exists an ample divisor $H$ on $X_0$ and an (hence any) open subset $V\subset B$ with $\pi^{-1}(b)$ smooth projective for any $b\in V$, such that the set of isomorphism classes of polarised varieties $(\pi^{-1}(b), H|_{\pi^{-1}(b)})$ for $b\in V$ is not a singleton.},
    \end{enumerate}
then there exists an irreducible symplectic variety $X$ with $\QQ$-factorial terminal singularities and a Lagrangian fibration $\pi\colon X\rightarrow B$ extending $\pi_0$, i.e. there is an open immersion $X_0\hookrightarrow X$ such that $X_0\hookrightarrow X\xra{\pi} B$ is isomorphic to $\pi_0$.
\end{theorem}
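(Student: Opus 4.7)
The plan is to build the compactification in three stages: (i) resolve $\pi_0$ to a smooth model with SNC boundary, (ii) run a minimal model program to contract the boundary onto a $\QQ$-factorial terminal model with numerically trivial canonical class, and (iii) upgrade that model to an irreducible symplectic variety using the singular Beauville--Bogomolov decomposition together with the simplicity hypothesis on fibers.

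First, by Hironaka I would construct a smooth projective compactification $\bar X$ of $X_0$ carrying a morphism $\bar\pi\colon\bar X\to B$ extending $\pi_0$, with SNC boundary $E=\bar X\setminus X_0$. By hypothesis $\sigma_0$ extends to a holomorphic $2$-form $\bar\sigma$ on $\bar X$; since $\sigma_0^{n}$ trivializes $K_{X_0}$, the top power $\bar\sigma^{n}$ exhibits $K_{\bar X}$ as effective and supported on $E$. I then run a $K_{\bar X}$-MMP, which can be organized as an MMP for a klt pair with big boundary so that termination follows from standard results. Because $K_{\bar X}$ is supported on $E$, every contraction or flip is supported inside $E$ and preserves $X_0\subset\bar X$. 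The program terminates at a $\QQ$-factorial model $X$ with a morphism $\pi\colon X\to B$ extending $\pi_0$, and with $K_X$ nef and effective, still supported on the image of $E$. A negativity-lemma argument, aided by condition~(1), then forces $K_X\equiv 0$; and since $K_{X_0}=0$, any exceptional divisor over $X_0$ would carry positive discrepancy, so $X$ is terminal.

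The reflexive $2$-form $\sigma$ on $X$ induced by $\bar\sigma$ has $\sigma^{n}$ trivializing $\omega_X$ in codimension one, so $\sigma$ is a symplectic form on $X$ in the sense of \cite{greb:klt-varieties, BL22}, and the general fiber of $\pi$ remains Lagrangian. To upgrade $X$ to an irreducible symplectic variety, I would appeal to the singular Beauville--Bogomolov decomposition \cite{horing:decomposition, bakker:alg-approx}: a finite quasi-\'etale cover $\widetilde X\to X$ splits as a product of irreducible symplectic, strict Calabi--Yau, and abelian factors. An abelian factor would descend to a nontrivial isogeny factor of the very general fiber of $\pi$, contradicting the simplicity assumed in~(2); a strict Calabi--Yau factor would produce extra reflexive $2$-forms incompatible with the uniqueness (up to scalar) of $\sigma$. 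Hence the decomposition is trivial and $X$ itself is irreducible symplectic, with $\pi_1(X)$ trivial and $h^0(X,\Omega_X^{[2]})=1$.

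The hardest part, as I see it, is this final step: controlling the singular Beauville--Bogomolov decomposition sharply enough that hypothesis~(2) (simplicity of the very general fiber, not just irreducibility) becomes the precise obstruction to a nontrivial splitting, and simultaneously using condition~(1) to guarantee that the boundary of $\bar X$ is mapped into a codimension-$\ge 2$ subset of $B$, so that $\pi$ actually extends $\pi_0$ over all of $B$ rather than over a proper open subvariety.
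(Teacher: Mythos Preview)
Your MMP construction in stages (i)--(ii) is essentially the paper's Theorem \ref{thm-sym}: the paper likewise takes a smooth compactification, observes $K_{\wt X}\sim_{\QQ} G$ with $G$ effective and supported on the boundary, runs a relative MMP over $B$ (using \cite{HX} for termination rather than a ``big boundary'' trick), and then kills the pushforward of $G$ via Lai's degenerate-divisor lemma, which is where condition~(1) enters. So far so good.

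The genuine gap is in stage (iii). First, your reasoning for excluding strict Calabi--Yau factors is backwards: a strict CY factor $Y_i$ of dimension $\geq 3$ has \emph{no} nonzero reflexive $2$-forms, so it cannot carry a piece of the symplectic form---that is what rules it out, not ``extra'' $2$-forms (and you cannot invoke uniqueness of $\sigma$, since that is part of what you are proving). Second, and more seriously, you do not address the case where the cover $\wt X$ splits as a product of \emph{several} irreducible symplectic factors $Z_1\times\cdots\times Z_m$ with $m\geq 2$. Nothing in your sketch excludes this, and it is the crux of the matter. The paper handles it in Lemma \ref{prop-key} by a fairly delicate argument: decompose $\pi^*\oh_B(1)=\bigotimes_j \pr_j^* L_j$ using $\Pic(\wt X)=\prod_j\Pic(Z_j)$, show each $L_j$ pulls back trivially to the simple fiber $A_t$, deduce that the projection of $A_t$ to each $Z_j$ is positive-dimensional and contracted by the induced map $Z_j\to B$, then invoke Schwald's theorem that any nontrivial fibration of an irreducible symplectic variety has half-dimensional Lagrangian torus fibers---so a general fiber of $Z_1\to\im(\pi_1)$ contains a torus $A'\subset A_t$, and simplicity of $A_t$ forces $A'=A_t$, hence $\dim Z_1=\dim X$. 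Your one-line claim that ``an abelian factor would descend to a nontrivial isogeny factor of the very general fiber'' similarly skips the work needed to relate the factor $A$ to the fiber $A_t$; the paper needs Lemma \ref{lem-abelian-lag} (no nontrivial isotropic subtorus in an abelian variety) plus the dimension count in Step~2 of Lemma \ref{prop-key}. Finally, to conclude that $X$ itself is irreducible symplectic you must verify the reflexive-form condition for \emph{every} quasi-\'etale cover $Y\to X$, not just one; the paper does this by passing to a further cover $\wt Y\to Y$, showing $\wt Y$ is irreducible symplectic, and then descending via Lemma \ref{lem-coho-irr}.
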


The assumption (1) in Theorem \ref{thm-intro-sym} ensures that we can find a symplectic compactification $X$ with $\QQ$-factorial terminal singularities via techniques from the minimal model program (cf.~Theorem \ref{thm-sym}). This was first established in \cite{sacca:lag-fibration}. Then we can apply the singular version of the decomposition theorem to any quasi-\'etale covering of $X$, and assumption (2) guarantees that such a covering can be further covered by an irreducible symplectic variety. This is enough to conclude that $X$ itself is an irreducible symplectic variety (cf.~Proposition \ref{prop-irr-lag}). Note that we do not assume $X_0\to U$ to be a projective morphism. This is crucial when we apply Theorem \ref{thm-intro-sym} to explicit examples. 

There have been some results to compactify a Lagrangian fibration $\mathcal{J}\to U$ over a quasi-projective base $U$, see e.g.~\cite{MT07,asf15,lsv,voisin:twist-jac,BCGPSV}.  However, in these previous
works, compactifications are directly given by concrete geometric constructions. The new idea of
Theorem \ref{thm-intro-sym} is that one only needs to identify a natural compactification of the base $B \supset U$ (such $B$ is unique by Proposition \ref{prop-uniqueness}) and verify that the Lagrangian fibration can be extended over codimension one points. Then the existence of a projective irreducible symplectic compactification follows from the general results of the minimal model program and the information about the general fibers. 

The main shortcoming of this approach is that it can not tell exactly which singularities appear on the compactification or no singularities at all, as we often see in the minimal model program theory. Nevertheless, we note that by \cite{namikawa:deform-sym} and \cite[Theorem 6.16]{BL22}, the smoothness or even the local singularity type of a compactification is an intrinsic property of $\pi_0\colon X_0\to B$, which does not depend on the choice of the minimal model program process.

\subsection{A new deformation type of irreducible symplectic varieties}

As an application of Theorem \ref{thm-intro-sym}, we aim at applying it to the following setting: let $X\subset \PP^5$ be a general cubic fourfold and $\cY\to U$ be a universal family parametrizing general smooth cubic fivefolds $Y\subset \PP(V)\cong \PP^6$ containing $X$. Then from \cite{IM08cubic} (see also \cite{mark:jac-5fold,lsv}),  the relative Jacobian $\mathcal{J}(\cY_U)\to U$ is a holomorphic symplectic manifold with a Lagrangian fibration. 

As we mentioned, to verify assumptions in Theorem \ref{thm-intro-sym}, we need to show that $U$ admits a compactification and the Lagrangian fibration structure can be extended over all codimension one points.  Here we use the geometric invariant theory (GIT) to construct a natural concrete projective compactification $M\cong \PP(1^{15},2^6,3)$ of $U$ (cf.~Theorem \ref{thm:wall-crossing-fiber}). More precisely, $M$ arises as the coarse moduli space of the Deligne--Mumford stack $\cM$ parametrizing GIT stable pairs $(Y,H)$ with respect to the line bundle $\cO(m,(\frac{1}{2}-\epsilon)m)$ ($0<\epsilon\ll 1$) on $\mathbb{P}({\rm Sym}^3(V^*))\times \mathbb{P}(V^*)$, where $Y$ is a cubic fivefold, $H$ is a hyperplane in $\mathbb{P}^6$, and $Y\cap H \cong X$. To precisely characterize the moduli space $M$, we use the viewpoint of variation of GIT, which has been considered in \cite{GMG18} in our setting. See Sections \ref{ss-VGIT} and \ref{subsec-moduli-space} for more details of the construction and calculations. Then the extension condition is essentially implied by the results in \cite{IM08cubic}, which says the Lagrangian fibration can be extended over points parametrizing cubic fivefolds $Y$ with at most one nodal singularity (we follow \cite{lsv} to reprove a slightly stronger result).

As a conclusion, we prove:
\begin{theorem}[{Theorem \ref{thm-cubic}}]\label{thm-intro-cubic}
Let $X$ be a general cubic fourfold. Then there exists an irreducible symplectic variety $\overline{\mathcal{J}}$ with $\QQ$-factorial terminal singularities and a Lagrangian fibration $$\pi\colon \overline{\mathcal{J}}\to \PP(1^{15},2^6,3)$$
extending the relative Jacobian fibration $\mathcal{J}(\cY_U)\to U$ of general cubic fivefolds containing $X$. Moreover, we have $b_2(\overline{\mathcal{J}})\geq 24$.
\end{theorem}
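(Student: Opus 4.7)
The plan is to apply Theorem \ref{thm-intro-sym} to the holomorphic symplectic manifold $\pi_0\colon \mathcal{J}(\cY_U) \to U$ from \cite{IM08cubic}, parametrizing the intermediate Jacobian fibration over the open locus $U$ of smooth cubic fivefolds $Y \subset \PP(V)$ containing $X$. For this I must (i) identify a natural normal projective compactification $B \supset U$ with $\codim_B(B \setminus U) \geq 2$, (ii) verify that $\sigma_0$ extends to a holomorphic $2$-form on some smooth compactification of $\pi_0$, and (iii) check that very general fibers are projective simple Lagrangian tori. Once these are established, Theorem \ref{thm-intro-sym} produces the desired $\QQ$-factorial terminal irreducible symplectic compactification $\overline{\mathcal{J}}$ with a Lagrangian fibration over $B$, and only the bound $b_2(\overline{\mathcal{J}})\ge 24$ remains.

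For (i), I would construct $B$ as the coarse moduli space of the Deligne--Mumford stack $\cM$ of GIT stable pairs $(Y, H)$ with respect to the asymmetric linearization $\cO(m,(\tfrac{1}{2}-\varepsilon)m)$ on $\PP(\Sym^3 V^*) \times \PP(V^*)$, and then take the fiber over $[X]$ of the natural forgetful map to the moduli of cubic fourfolds. Following the VGIT wall-crossing analysis of \cite{GMG18}, in coordinates adapted to $H = \{x_6 = 0\}$ a defining cubic can be written as $F = \lambda f + x_6 F_1 + x_6^2 F_2 + x_6^3 F_3$; normalizing $\lambda$ via overall scaling and using the $6$-dimensional family of shearings $x_6 \mapsto x_6 + \ell$ to eliminate $6$ of the $21$ coefficients of $F_1$ leaves a residual $\mathbb{C}^\times$ scaling with weights $1$ (fifteen times), $2$ (six times), $3$ (once), yielding $B \cong \PP(1^{15}, 2^6, 3)$. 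The codimension statement then follows by identifying $B \setminus U$: its divisorial component parametrizes cubic fivefolds acquiring a single nodal singularity (necessarily off $X$), while all worse degenerations form strata of codimension $\geq 2$ in $B$.

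For (ii), I would invoke the degeneration analysis following \cite{IM08cubic,lsv}: the intermediate Jacobian fibration extends as a holomorphic symplectic family over the locus of fivefolds with at most one node. Combined with (i), this provides a smooth partial compactification of $\pi_0$ over an open subset of $B$ whose complement has codimension $\geq 2$, so Hartogs-type extension allows $\sigma_0$ to extend to a $2$-form on any smooth compactification. For (iii), the intermediate Jacobian of a very general cubic fivefold is a simple abelian variety: any non-trivial sub-Hodge structure of its primitive $H^5$ would be preserved by monodromy, and a Noether--Lefschetz argument on the moduli of cubic fivefolds excludes this outside a countable union of proper subvarieties.

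The main obstacle is the final assertion $b_2(\overline{\mathcal{J}}) \geq 24$, since the minimal model program used in Theorem \ref{thm-intro-sym} does not give explicit geometric control over $\overline{\mathcal{J}}$. My strategy would be to work at the level of $H^2$ classes using Markman's framework from \cite{markman:rank-1-obstruction,BL22}: the pullback $\pi^* \cO_B(1)$ contributes one class to $H^{1,1}(\overline{\mathcal{J}})$, and each irreducible divisorial component of the preimage $\pi^{-1}(B \setminus U)$, together with any exceptional divisors extracted in the MMP, contributes a further independent class, independence being checked via intersection pairing against general fibers and via the Beauville--Bogomolov form. A careful counting using the explicit VGIT stratification from (i) and the monodromy action on vanishing cycles of the degenerating intermediate Jacobians should produce at least $22$ independent classes in $H^{1,1}(\overline{\mathcal{J}})$, which together with $h^{2,0} = h^{0,2} = 1$ yields $b_2(\overline{\mathcal{J}}) \geq 24$ as required.
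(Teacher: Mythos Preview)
Your outline for steps (i)--(iii) matches the paper's approach: the VGIT construction of $B\cong\PP(1^{15},2^6,3)$ in Section~\ref{subsec-moduli-space}, the extension of the symplectic form over the $1$-nodal locus (Theorem~\ref{lem-2-form}), and the simplicity of very general fibers via Lemma~\ref{lem-simple-jac}. One small clarification: with $U$ the smooth locus, $\codim_B(B\setminus U)=1$, so you must first enlarge $U$ to the open set $V_n$ parametrizing at-worst $1$-nodal fivefolds (as you do in (ii)) before invoking Theorem~\ref{thm-intro-sym}; the paper makes this explicit via Corollary~\ref{cor-codim-base}.

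The genuine gap is in your proposed argument for $b_2(\overline{\cJ})\geq 24$. Counting divisor classes on a single $\overline{\cJ}_X$ cannot produce anything close to $22$ independent elements of $H^{1,1}$: the discriminant locus in $B$ has a single irreducible divisorial component (the nodal locus), so $\pi^{-1}(B\setminus V_s)$ contributes at most one or two classes beyond $\pi^*\cO_B(1)$ and a relative polarization; and the MMP gives no control whatsoever over exceptional divisors (indeed, for all you know there are none). There is no ``careful counting using the VGIT stratification'' that fixes this, because the higher-codimension strata of $B$ do not yield divisors in $\overline{\cJ}$ by equidimensionality of the Lagrangian fibration (Proposition~\ref{prop-equidim}).

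The paper's argument is entirely different and transcendental: it varies the cubic fourfold $X$ over a $20$-dimensional base $U\subset\bfM_4^\circ$, runs the MMP in a family to obtain a locally trivial family $\overline{\cJ}_U\to U$ (using \cite{namikawa:deform-sym}), and studies the resulting period map $J\colon\tU\to\Omega_\Lambda$ from the universal cover. The key step is to show $\dim(\im J)=20$, which is deduced from the local Torelli theorem for primitive symplectic varieties \cite[Proposition~5.5]{BL22} together with the generic finiteness of the period map $\mathscr{P}\colon\bfM_5\to\mathbf{A}_{21}$ for cubic fivefolds: if the period map $J'$ to the marked moduli space had positive-dimensional fibers, birational $\overline{\cJ}_X$ and $\overline{\cJ}_{X'}$ would share isomorphic general Lagrangian fibers, forcing the images of $\PP_X\dashrightarrow\bfM_5$ to coincide for infinitely many $X$, contradicting generic finiteness of $M(\tfrac12-\epsilon)\dashrightarrow\bfM_5\times\bfM_4$. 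Since each $\overline{\cJ}_X$ has $\rho\geq 2$, the image of $J$ lies in a sublocus of $\Omega_\Lambda$ of codimension at least $2$, whence $\dim\Omega_\Lambda\geq 22$ and $b_2\geq 24$. This deformation-theoretic approach is what you are missing.
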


\begin{remark}
We have the following remarks to Theorem \ref{thm-intro-cubic}.
\begin{enumerate}
\item
As far as we know, previously known examples of irreducible symplectic variety with the second Betti number $b_2\geq 24$ are deformation equivalent to O'Grady's $10$-dimensional variety \cite{ogrady:og10}. Therefore, the compactification in Theorem \ref{thm-intro-cubic} should provide a genuinely new example.
\item The fact that the base of the Lagrangian fibration is not isomorphic to $\PP^{21}$ suggests that $\overline{\mathcal{J}}$ is not smooth, as we expect the base of any Lagrangian fibration of an irreducible symplectic manifold is smooth and isomorphic to a projective space (this is only known in dimension $\le 4$, see e.g.~\cite{Ou19, HX:P2}). However, we cannot confirm this for now. \footnote{After our paper was published, J\'anos Koll\'ar pointed out to us that $\overline{\cJ}$ is indeed singular. Let $(x\in X)\to \PP(1^{15},2^6,3)$ be a germ of a singularity on $ \PP(1^{15},2^6,3)$. So there is a Galois quasi-\'etale cover $(x'\in X')\to (x\in X)$ with Galois group $G=\bZ/2\bZ$ or $\bZ/3\bZ$. If $\overline{\cJ}$ is smooth, then by \cite[Lemma 2.1]{HX:P2}, $\pi$ can be lifted to a $G$-equivariant fibration $\pi'\colon\cJ':=\overline{\cJ}\times_{ \PP(1^{15},2^6,3)}X'\to X'$ between smooth projective $G$-varieties such that the action of $G$ on $\cJ'$ is free. Here, we use $\overline{\cJ}$ is smooth so $\cJ'\to \overline{\cJ}$ is \'etale by purity of branch locus. In particular, $\pi^{-1}(x)=|G|\cdot \pi^{-1}(x)_{\rm red}$ as cycle classes. However, $\pi$ admits a rational section $Z$ with $Z\cdot \pi^{-1}(x)=1$, which is a contradiction.}
\item Since $\mathcal{J}(\cY_U)\to U$ can be regarded as a relative Picard fibration of Lagrangian surfaces in some irreducible symplectic fourfolds deformation equivalent to Hilbert schemes of two points on K3 surfaces (cf.~\cite{IM08cubic}), Theorem \ref{thm-intro-cubic} can be viewed as a realization of Markman's proposal in \cite[Section 7.4]{markman:rank-1-obstruction} in this setting.
\item  There are other examples of non-compact Lagrangian fibrations constructed from Fano manifolds, such as \cite{iliev2011fano,im:prime-Fano-lag,mark:k3-fano-flag}. It is possible to construct compactifications of these examples via Theorem \ref{thm-intro-sym}. 
\end{enumerate}
\end{remark}

In the light of Theorem \ref{thm-intro-cubic} and other known examples, it is tempting to make the following conjecture, which is analogous to the case of $X$ being smooth. 

\begin{conjecture}
If $X\to B$ is a Lagrangian fibration of a $2n$-dimensional projective irreducible symplectic variety, then $B$ is isomorphic to a quotient of $\PP^n$.
\end{conjecture}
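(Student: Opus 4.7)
The plan is to reduce the conjecture to Hwang's characterization of projective space (and its hoped-for singular analogues) via a suitable quasi-étale cover of $X$. First, I would invoke the singular Beauville--Bogomolov decomposition theorem, together with the very definition of an irreducible symplectic variety, to produce a Galois quasi-étale cover $f\colon \widetilde X \to X$ with deck group $G$ that realizes $X$ as a quotient of an irreducible symplectic variety $\widetilde X$ with the simplest possible structure in its birational class (ideally terminal Gorenstein, or smooth when available). The Lagrangian fibration $\pi\colon X\to B$ pulls back along $f$, and the Stein factorization of $\pi\circ f$ gives a Lagrangian fibration $\widetilde\pi\colon \widetilde X\to \widetilde B$ together with a finite morphism $q\colon \widetilde B\to B$. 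Since $f$ is $G$-Galois and the symplectic form on $\widetilde X$ is $G$-invariant, $\widetilde\pi$ is automatically $G$-equivariant, so $q$ realizes $B$ as the quotient $\widetilde B/G$.

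Second, I would analyze $\widetilde B$ using the singular analogues of Matsushita's theorems, which assert that the base of a Lagrangian fibration on a projective irreducible symplectic variety is a normal projective klt Fano variety of Picard rank one with the rational cohomology of $\PP^n$ (already known in the singular setting via \cite{sacca:lag-fibration} and subsequent work). If $\widetilde X$ can be arranged to be an irreducible symplectic \emph{manifold}, Hwang's theorem yields $\widetilde B\cong \PP^n$, and combined with the first step this gives $B\cong\PP^n/G$ as desired. This strategy also recovers the known instances in dimension at most four due to Ou and Huybrechts--Xu, and it reduces the singular conjecture to a singular enhancement of Hwang's theorem.

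The main obstacle is two-fold. On one hand, producing a quasi-étale cover $\widetilde X$ that is actually smooth appears out of reach in general, since irreducible symplectic varieties need not admit crepant (or even terminal) symplectic resolutions, and the deck group $G$ may still act with fixed points on a smooth model if one exists. On the other hand, one would need a klt analogue of Hwang's theorem for $\widetilde B$, whose original proof relies on the variety of minimal rational tangents and a Cartan--Fubini-type extension argument that is delicate to transfer to the singular setting. A complementary route would bypass Hwang and directly classify the possible $\widetilde B$ using $\rho(\widetilde B)=1$, log-Fano-ness, and the volume constraints on $-K_{\widetilde B}$ inherited from the Beauville--Bogomolov form on $\widetilde X$, combined with boundedness for $\epsilon$-lc Fano varieties of fixed dimension; the heart of the work would then be showing that every such $\widetilde B$ is a quotient of $\PP^n$, which seems to me the deepest part of the conjecture.
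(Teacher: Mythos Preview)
The statement is a \emph{conjecture}; the paper does not prove it and presents it as open, noting only that it is known when the base $B$ is smooth (via Hwang and Matsushita) and that cohomological evidence exists. There is no proof in the paper to compare against. What you have written is a strategy outline, and you already flag its two main gaps yourself.

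That said, the first step has a more fundamental problem than you acknowledge. The singular Beauville--Bogomolov decomposition does not improve singularities: a quasi-\'etale cover $f\colon \widetilde X\to X$ is \'etale over the smooth locus, so $\widetilde X$ has the same local analytic singularity types as $X$. When $X$ is already irreducible symplectic, the decomposition theorem applied to any quasi-\'etale cover simply returns a single irreducible symplectic factor with no abelian or Calabi--Yau pieces, and with the same singularities. There is no mechanism here that produces a smooth (or even less singular) $\widetilde X$, so the hoped-for reduction to the smooth case does not get started.

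Even granting a smooth $\widetilde X$, you would still need $\widetilde B$ smooth to invoke Hwang's theorem, and smoothness of the base of a Lagrangian fibration on an irreducible symplectic \emph{manifold} is itself an open problem in dimension $>4$ (the paper cites the known cases in dimension $\le 4$). So the route stacks two unresolved problems rather than reducing the conjecture to one. Your alternative suggestion via boundedness of $\epsilon$-lc Fanos is interesting but, as you say, the step from ``Picard rank one klt Fano with the cohomology of $\PP^n$'' to ``quotient of $\PP^n$'' is exactly the content of the conjecture and is not touched by boundedness alone.
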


The conjecture is known under the extra assumption that the base is smooth, see \cite{hwang:base-manifold,Matsushita-15}. 
 A cohomological evidence was established in \cite{FSH-IHSsingular}.

The paper is organized as follows. In Section \ref{sec-pre}, we recollect some definitions and properties of symplectic varieties. Then our main results Theorem \ref{thm-sym}, Theorem \ref{thm-irr-sym}, and a variant Corollary \ref{cor-variant} are proven in Section \ref{sec-compact}. Finally, in Section \ref{sec-jacobian}, we study the moduli space of cubic fivefolds containing a fixed cubic fourfolds in detail (cf.~Theorem \ref{thm:wall-crossing-fiber}) and apply our existence criterion to a family of cubic fivefolds (Theorem \ref{thm-cubic}). 




\subsection*{Acknowledgements}

ZL would like to thank his supervisor Yongbin Ruan for encouragement and support.  CX would like to thank  Daniel Huybrechts, Giulia Sacc\`a, and Richard Thomas who are co-organizers of the Oberwolfach workshop ``Algebraic Geometry: Wall Crossing and Moduli Spaces, Varieties and Derived Categories", where his interest in this topic was raised. The collaboration was started at the National Algebraic Geometry Conference organized by Shandong University, to which we would like to thank the organizers. 
We would like to thank Emanuele Macr\`i, Kieran O’Grady, and Giulia Sacc\`a for discussions and pointing out inaccuracies in our early draft, and Giulia Sacc\`a for informing us about her work \cite{sacca:lag-fibration}. We would like to thank Luca Tasin for pointing out a mistake in the previous version. We would also like to thank Lie Fu, Hanfei Guo, Xiaolong Liu,  Songtao Ma, Mirko Mauri, Alexander Perry, Yongbin Ruan, Junliang Shen, Rui Xiong, Qizheng Yin and Xiaolei Zhao for many useful discussions on related topics. We thank the anonymous referee for a careful reading as well as a list of useful suggestions that improved
the exposition of the paper, especially the simplification of the proof of Theorem \ref{thm-cubic}.

YL was partially supported by NSF CAREER Grant DMS-2237139 and an AT\&T Research Fellowship from Northwestern University. ZL is partially supported by NSFC Grant 123B2002. CX was partially supported by a Simons Investigator grant and a Simons collaboration grant.

\subsection*{Notations and conventions}\label{sec-notation} \leavevmode

\begin{itemize}
    \item Throughout this paper, we work over $\CC$. A variety is an irreducible reduced separated scheme of finite type over $\CC$.

    \item We denote the smooth and the singular locus of a variety $X$ by $X_{\reg}$ and $X_{\mathrm{sing}}$, respectively.


    \item Let $f\colon X\to Y$ be a dominant morphism between quasi-projective varieties. We say general fibers of $f$ satisfy a property P if there exists a proper closed subset $Z\subset Y$ such that $f^{-1}(b)$ satisfies the property P for any (closed) point $b\in Y\setminus Z$. We say very general fibers of $f$ satisfy a property P if there exist countably many proper closed subsets $Z_i\subset Y$ for $i\in I$ such that $f^{-1}(b)$ satisfies the property P for any (closed) point $b\in Y\setminus \cup_{i\in I} Z_i$.
\end{itemize}

\section{Preliminaries}\label{sec-pre}

We first review some basic definitions and properties of the objects of our study.

\subsection{Symplectic varieties}

Recall that \emph{reflexive $p$-forms} on a normal variety $X$ are holomorphic $p$-forms on $X_{\reg}$. The sheaf $\Omega_X^{[p]}$ of reflexive $p$-forms on $X$ can be identified as 
\[\Omega_X^{[p]}=i_*\Omega^p_{X_{\reg}}=(\Omega_{X}^p)^{\vee \vee},\]
where $i\colon X_{\reg}\hookrightarrow X$ is the inclusion.

For any morphism $f\colon X\to Y$ between normal quasi-projective varieties such that $Y$ is klt, a natural pull-back map $f^{[*]}\colon f^*\Omega_Y^{[p]}\to \Omega_X^{[p]}$ is constructed in \cite[Theorem 1.3]{kebekus:pull-back} for each integer $p\geq 0$.

\begin{definition}\label{def-quasi-etale}
Let $f\colon Y\to X$ be a morphism between normal varieties. We say $f$ is \emph{quasi-\'etale} if there exists a closed subset $Z\subset X$ with $\codim_X(Z)\geq 2$ such that $f^{-1}(X\setminus Z)\to X\setminus Z$ is \'etale.
\end{definition}

\begin{definition}\label{def-sym}
Let $X$ be a normal quasi-projective variety.

\begin{enumerate}
    \item A \emph{holomorphic symplectic form} on $X$ is a holomorphic everywhere non-degenerate closed $2$-form on $X_{\reg}$.

    \item We call a pair $(X,\sigma)$ is a \emph{holomorphic symplectic manifold} if $X$ is smooth and $\sigma$ is a holomorphic symplectic form on $X$.

    \item A pair $(X,\sigma)$ is called a \emph{symplectic variety} if $\sigma\in \rH^0(X, \Omega^{[2]}_X)$ is a holomorphic symplectic form, and for a (hence any) resolution $X'\to X$ there is $\sigma'\in \rH^2(X', \Omega^2_{X'})$ extending $\sigma$.

    \item We call a projective symplectic variety $(X, \sigma)$ is a \emph{primitive symplectic variety} if it satisfies $\rH^0(X, \Omega^{[2]}_X)=\CC\sigma$ and $\rH^1(X, \oh_X)=0$.

    \item We call a projective symplectic variety $(X, \sigma)$ is an \emph{irreducible symplectic variety} if for any finite quasi-\'etale morphism $f\colon Y\to X$, the reflexive $2$-form $f^{[*]}(\sigma)$ generates the exterior algebra of reflexive forms on $Y$, i.e.~we have an identification of graded algebras
    \[\bigoplus_{p\geq 0} \rH^0(Y, \Omega_Y^{[p]})=\CC[f^{[*]}(\sigma)].
    \]
    Here and also in (f), the multiplication is given by the exterior product of holomorphic forms on the smooth locus.

    \item We call a projective symplectic variety $(X, \sigma)$ is a \emph{cohomological irreducible symplectic variety} if the reflexive form $\sigma$ generates the exterior algebra of reflexive forms on $X$, i.e.~we have an identification of graded algebras
    \[\bigoplus_{p\geq 0} \rH^0(X, \Omega_X^{[p]})=\CC[\sigma].\]
\end{enumerate}
\end{definition}

There are some equivalent conditions for a variety being symplectic.

\begin{lemma}\label{lem-equiv-sym}
Let $X$ be a normal quasi-projective variety. Then the following are equivalent.

\begin{enumerate}[(1)]
    \item $X$ is klt and there exists a smooth open subscheme $X_0\subset X$ with $\mathrm{codim}_X(X\setminus X_0)\geq 2$ and a holomorphic symplectic form $\sigma_0$ on $X_0$.

    \item $X$ is Gorenstein, has canonical singularities, and there is a holomorphic symplectic form $\sigma$ on $X_{\reg}$.

    \item $(X,\sigma)$ is a symplectic variety.
\end{enumerate}

\end{lemma}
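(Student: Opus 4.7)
The plan is to establish the equivalences (1)$\Leftrightarrow$(2)$\Leftrightarrow$(3), with the only nontrivial ingredient being the extension theorem of Greb--Kebekus--Kov\'acs--Peternell for reflexive differentials on klt singularities.

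First, the implication (2)$\Rightarrow$(1) is immediate: canonical singularities are klt, normality of $X$ gives $\codim_X X_{\mathrm{sing}} \geq 2$, and one takes $X_0 := X_{\reg}$. For (1)$\Rightarrow$(2), write $\dim X = 2n$; then the top power $\sigma_0^n$ is a nowhere vanishing section of $\omega_{X_0}$. Because $\codim_X(X \setminus X_0) \geq 2$ and $X_{\reg}$ is smooth hence $S_2$, Hartogs' theorem extends $\sigma_0$ to a closed non-degenerate $2$-form on $X_{\reg}$, and pushing forward $\sigma_0^n$ along $i\colon X_{\reg} \hookrightarrow X$ yields a nowhere vanishing section of the reflexive sheaf $\omega_X^{[1]} = i_*\omega_{X_{\reg}}$. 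Hence $\omega_X^{[1]} \cong \oh_X$ is a trivial line bundle, so $X$ is Gorenstein with $K_X \sim 0$; combined with the klt hypothesis (all discrepancies $> -1$) and the integrality of discrepancies in the Gorenstein case, this upgrades to canonical singularities.

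For (2)$\Rightarrow$(3) I would apply the reflexive differentials extension theorem (see \cite{greb:singular-space} and \cite{kebekus:pull-back}): on a klt variety every reflexive $p$-form extends to a holomorphic $p$-form on any resolution, which directly yields the required lift $\sigma'$ for the reflexive $2$-form $\sigma \in H^0(X, \Omega_X^{[2]})$. Conversely, for (3)$\Rightarrow$(2), the Hartogs argument above again shows $K_X$ is trivial and Cartier, hence $X$ is Gorenstein. To verify canonicity, fix a log resolution $f\colon X' \to X$ with exceptional divisors $E_i$ and discrepancies $a_i$, so $K_{X'} = \sum a_i E_i$; the extension $\sigma'$ produces a regular global section $(\sigma')^n \in H^0(X', \omega_{X'})$ that agrees with $f^{*}\sigma^n$ on $f^{-1}(X_{\reg})$ and is therefore nowhere vanishing there. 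Consequently its zero divisor is effective and supported in the exceptional locus, which forces $a_i \geq 0$ for all $i$.

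The main obstacle is really the single step (2)$\Rightarrow$(3), which relies on the deep extension theorem for reflexive forms on klt singularities; all the remaining implications reduce to routine manipulations with reflexive sheaves, Hartogs' theorem, and the definition of discrepancies via a nowhere vanishing top form.
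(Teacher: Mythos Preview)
Your proof is correct and follows essentially the same approach as the paper: the only deep input is the extension theorem for reflexive differentials on klt varieties, which you invoke exactly as the paper does (the paper cites \cite[Theorem 1.4]{GKKP:diff-form-log-canonical} or \cite[Corollary 1.8]{schnell:extend-holo-form} for the step (1)$\Rightarrow$(3)). The only cosmetic difference is that the paper outsources (3)$\Rightarrow$(2) to \cite[Proposition 1.3]{beauville:symplectic-sing}, whereas you spell out the discrepancy computation directly---which is essentially Beauville's argument anyway.
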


\begin{proof}
From \cite[Proposition 1.3]{beauville:symplectic-sing}, (3) implies (2), and it is clear that (2) implies (1).

Now we prove that (1) implies (3). 
The existence of a holomorphic symplectic form $\sigma_0$ on $X_0$ and $\mathrm{codim}_X(X\setminus X_0)\geq 2$ imply that the existence of a holomorphic symplectic form $\sigma$ on $X_{\reg}$ with $\sigma|_{X_0}=\sigma_0$. Then $X$ is a symplectic variety according to \cite[Theorem 1.4]{GKKP:diff-form-log-canonical} or \cite[Corollary 1.8]{schnell:extend-holo-form}.
\end{proof}

Using \cite[Proposition 5.20]{kollar-mori} and Lemma \ref{lem-equiv-sym}(1), it is easy to see that if $(X,\sigma)$ is a symplectic variety and $f\colon Y\to X$ is a finite quasi-\'etale morphism, then $(Y, f^{[*]}(\sigma))$ is also a symplectic variety. Moreover, we have:

\begin{lemma}\label{lem-coho-irr}
Let $(X,\sigma)$ be a projective symplectic variety and $(X', \sigma')$ be a cohomological irreducible symplectic variety. If there is a surjective morphism $X'\to X$, then $(X,\sigma)$ is also a cohomological irreducible symplectic variety.
\end{lemma}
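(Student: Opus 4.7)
The plan is to use the surjective morphism $f\colon X'\to X$ to transport the cohomological irreducible symplectic structure of $X'$ to $X$. Since $(X,\sigma)$ is a symplectic variety and hence klt by Lemma~\ref{lem-equiv-sym}, Kebekus's reflexive pullback gives for every $p\geq 0$ a $\CC$-linear map
\[
f^{[*]}\colon \rH^0(X,\Omega_X^{[p]}) \longrightarrow \rH^0(X',\Omega_{X'}^{[p]}).
\]

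The first key step I would carry out is to prove that this map is injective for every $p$; this is the main, though not deep, point of the argument, and it is where the surjectivity hypothesis enters essentially. By generic smoothness in characteristic zero, there is an open dense subset $V\subset X_{\reg}$ over which $f$ is smooth; for any $x'\in f^{-1}(V)$ the point $x'$ then lies in $X'_{\reg}$, and the cotangent map $(df_{x'})^*$ is an injection of vector spaces whose $p$-th exterior power is also injective. On $f^{-1}(V)\cap X'_{\reg}$ the reflexive pullback $f^{[*]}$ coincides with the ordinary pullback of $p$-forms, so it is injective on stalks there. A nonzero $\omega\in \rH^0(X,\Omega_X^{[p]})$ restricts to a nonzero section of $\Omega^p_{X_{\reg}}$, hence is nonzero at some point $x\in V$; any preimage $x'\in f^{-1}(x)$ (which exists by surjectivity) then witnesses $f^{[*]}\omega\neq 0$.

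Once injectivity is in hand, the cohomological irreducible symplectic condition on $(X',\sigma')$---namely $\rH^0(X',\Omega_{X'}^{[p]})=0$ for odd $p$ and $\rH^0(X',\Omega_{X'}^{[2k]})=\CC\cdot(\sigma')^k$---forces $\rH^0(X,\Omega_X^{[p]})=0$ for odd $p$ and $\dim_{\CC}\rH^0(X,\Omega_X^{[2k]})\leq 1$ for every $k$. To pin down the remaining graded pieces, I would use that $\sigma$ is symplectic: the top power $\sigma^n$ with $2n=\dim X$ is nowhere vanishing on $X_{\reg}$, so each $\sigma^k$ is a nonzero element of $\rH^0(X,\Omega_X^{[2k]})$ for $0\leq k\leq n$; for $2k>\dim X$ the sheaf $\Omega_X^{[2k]}$ vanishes. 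Combining these, $\rH^0(X,\Omega_X^{[2k]})=\CC\sigma^k$ in every degree, which is precisely the desired identification $\bigoplus_p \rH^0(X,\Omega_X^{[p]})=\CC[\sigma]$.
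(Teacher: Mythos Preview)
Your proof is correct and follows essentially the same approach as the paper's. The only difference is that where you prove the injectivity of $f^{[*]}$ on global reflexive forms by hand via generic smoothness, the paper simply invokes \cite[Proposition 5.8]{kebekus:pull-back} (which packages exactly this injectivity statement for surjective morphisms between klt varieties); the remaining steps---using the symplectic form to produce nonzero powers $\sigma^k$ and concluding $\bigoplus_p \rH^0(X,\Omega_X^{[p]})=\CC[\sigma]$---are identical.
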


\begin{proof}
By Lemma \ref{lem-equiv-sym}, $X$ and $X'$ have canonical singularities. Then from \cite[Proposition 5.8]{kebekus:pull-back}, we see $\rH^0(X, \Omega^{[p]}_X)=0$ when $p$ is odd and $\dim_{\CC}\rH^0(X, \Omega^{[p]}_X)\leq 1$ when $p\geq 0$ is even. Since $\sigma|_{X_{\reg}}$ is a holomorphic everywhere non-degenerate $2$-form, we obtain that $\wedge^k(\sigma|_{X_{\reg}})\neq 0$ for any $1\leq k\leq \frac{1}{2}\dim(X)$. This implies $\dim_{\CC}\rH^0(X, \Omega^{[p]}_X)\geq 1$ when $p\geq 0$ is even. Therefore, we obtain 
\[\bigoplus_{p\geq 0} \rH^0(X, \Omega_X^{[p]})=\CC[\sigma]\]
and the result follows.
\end{proof}




\subsection{Decomposition theorem}

Recall that a normal projective variety $X$ is called \emph{strict Calabi--Yau} if $X$ has at worst canonical singularities such that $\omega_X\cong \oh_X$ and for any finite quasi-\'etale morphism $Y\to X$, we have
\[\rH^0(Y, \Omega_Y^{[p]})=0\]
for all $0<p<\dim X$.

We will need the following structure theorem of $K$-trivial klt varieties proved in \cite{horing:decomposition}.

\begin{theorem}[{\cite[Theorem 1.5]{horing:decomposition}}]\label{thm-decomposition}
Let $X$ be a projective klt variety such that $\omega_X\cong \oh_X$. Then there exists a projective variety $\wt{X}$ with canonical singularities, a finite quasi-\'etale morphism $\wt{X}\to X$, and a decomposition
\[\wt{X}= A\times \prod_{i\in I} Y_i \times \prod_{j\in J} Z_j\]
such that

\begin{itemize}
    \item $A$ is an abelian variety,

    \item $Y_i$ is a strict Calabi--Yau variety, and

    \item $Z_j$ is an irreducible symplectic variety.
\end{itemize}

\end{theorem}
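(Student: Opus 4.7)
The plan is to follow the strategy of \cite{horing:decomposition} (building on \cite{greb:klt-varieties,greb:singular-space,druel:decomp-dim-5,druel:smoothable-decomp,bakker:alg-approx}): first produce a Beauville--Bogomolov-type decomposition of the reflexive tangent sheaf $\mathcal{T}_X := (\Omega_X^{[1]})^{\vee}$, and then integrate this infinitesimal decomposition to an honest geometric product, after replacing $X$ by a suitable finite quasi-\'etale cover.

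First I would split off the abelian factor. Starting from a log resolution of $X$ and the Albanese morphism, and using that in the $K$-trivial klt setting the augmented irregularity is well behaved under quasi-\'etale covers, one reduces to the case where $\mathrm{alb}_X\colon X\to A$ is an \'etale-locally trivial fibre bundle onto an abelian variety. Passing to a quasi-\'etale cover that trivialises this bundle, the remaining task is to prove the decomposition in the case $h^1(\mathcal{O}_X)=0$.

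On such an $X$ I would apply the singular holonomy decomposition of $\mathcal{T}_X$ of Greb--Guenancia--Kebekus. In the $K$-trivial klt setting $\mathcal{T}_X$ is polystable with respect to every polarisation (Enoki / Campana--P\u aun), and a singular Berger-type holonomy classification, combined with further quasi-\'etale covers, produces a direct sum decomposition
\[
 \mathcal{T}_X \;\cong\; \bigoplus_{i\in I}\mathcal{F}_i \;\oplus\; \bigoplus_{j\in J}\mathcal{G}_j
\]
into strongly stable integrable foliations with algebraic holonomy $\mathrm{SU}(m_i)$ and $\mathrm{Sp}(n_j)$ respectively; these are the infinitesimal analogues of the prospective strict Calabi--Yau and irreducible symplectic factors.

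The core obstacle, which is the heart of \cite{horing:decomposition}, is to integrate this sheaf-level splitting into a genuine product. Concretely, one must show that each of the foliations $\mathcal{F}_i$ and $\mathcal{G}_j$ is algebraically integrable with projective leaf closures of the expected dimension and trivial generic stabiliser, and that the associated leaf spaces assemble into a product decomposition $\widetilde{X}\cong A\times \prod_i Y_i\times \prod_j Z_j$; this uses a singular analogue of the de Rham decomposition theorem together with careful manipulations of the \'etale fundamental group and its action on the summands of $\mathcal{T}_X$. Finally, the holonomy type of each summand controls the graded reflexive exterior algebra $\bigoplus_p H^0(-,\Omega^{[p]}_{-})$ on every further quasi-\'etale cover of the corresponding factor, which is exactly what is needed to conclude that each $Y_i$ is strict Calabi--Yau and each $Z_j$ is irreducible symplectic in the sense of Definition \ref{def-sym}(e).
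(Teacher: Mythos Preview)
The paper does not give its own proof of this statement: Theorem \ref{thm-decomposition} is simply quoted from \cite[Theorem 1.5]{horing:decomposition} and used as a black box in Section \ref{sec-compact}. So there is no ``paper's own proof'' to compare your proposal against, and your sketch is not expected here; a one-line citation suffices.

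That said, your outline is a reasonable summary of the strategy behind \cite{horing:decomposition} and its predecessors (splitting off the Albanese factor after a cover, the polystability and holonomy decomposition of $\mathcal{T}_X$ \`a la Greb--Guenancia--Kebekus, and then the integration of the resulting foliations into an honest product). Just be aware that each of the steps you name hides substantial work---in particular the algebraic integrability of the foliations and the passage from holonomy type to the cohomological characterisation of the factors---so as written your proposal is an outline rather than a proof.
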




\subsection{Lagrangian fibrations}

Finally, we recollect some basic properties of Lagrangian fibrations.



\begin{definition}\label{def-lag-fib}
Let $(X,\sigma)$ be a symplectic variety. We say a surjective projective morphism $\pi\colon X\to U$ with connected fibers to a normal quasi-projective variety $U$ is a \emph{Lagrangian fibration} if general fibers of $\pi$ are Lagrangian subvarieties of $(X,\sigma)$.
\end{definition}

\begin{proposition}[{\cite[Theorem 1.7]{schwald:fibration}}]\label{prop-equidim}
Let $(X,\sigma)$ be a projective symplectic variety. Then every irreducible component of
each fiber of a Lagrangian fibration $\pi\colon X\to B$ is not contained in $X_{\mathrm{sing}}$ and is Lagrangian. In particular, $\pi$ is equidimensional.
\end{proposition}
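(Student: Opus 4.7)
The plan is to combine a dimension lower bound from the fiber-dimension theorem with a matching upper bound obtained by isotropy on a resolution, plus a separate argument ruling out containment in the singular locus.

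Since $\pi\colon X\to B$ is projective and surjective with general fiber Lagrangian of dimension $n:=\tfrac{1}{2}\dim X$, we have $\dim B=n$, so the fiber-dimension theorem for projective morphisms gives $\dim F\ge n$ for every irreducible component $F$ of every fiber of $\pi$. The substance of the proposition is therefore the upper bound $\dim F\le n$ together with the non-containment $F\not\subset X_{\mathrm{sing}}$ (the latter being what makes ``Lagrangian'' meaningful for $F$, since $\sigma$ lives on $X_{\mathrm{reg}}$). Equidimensionality then follows.

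For the upper bound I would pass to a resolution $\mu\colon \widetilde X\to X$, set $\widetilde\pi:=\pi\circ\mu$, and use Definition~\ref{def-sym}(c) to extend $\mu^{*}\sigma$ to a global $2$-form $\widetilde\sigma\in \rH^{0}(\widetilde X,\Omega^{2}_{\widetilde X})$, non-degenerate on $\mu^{-1}(X_{\mathrm{reg}})$. After further blow-ups I may assume the strict transform $\widetilde F\subset\widetilde X$ of a given component $F$ is smooth. The crux is a \emph{relative isotropy} claim: $\widetilde\sigma^{\flat}\colon T_{\widetilde X}\to \Omega^{1}_{\widetilde X}$ sends $\ker(d\widetilde\pi)$ into the image of $\widetilde\pi^{*}\Omega^{1}_{B}\to \Omega^{1}_{\widetilde X}$. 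This holds on the open locus where $\widetilde\pi$ is smooth (precisely the Lagrangian hypothesis on general fibers of $\pi$); granting its propagation to all of $\widetilde X$, one picks $p\in\widetilde F$ with $\mu(p)\in F\cap X_{\mathrm{reg}}$, and then non-degeneracy of $\widetilde\sigma_{p}$ together with the isotropy of $T_{p}\widetilde F\subset \ker(d\widetilde\pi)_{p}$ forces $\dim\widetilde F\le n$, giving $\dim F=n$ and that $F$ is Lagrangian.

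The propagation step is what I expect to be the main obstacle, because the relative cotangent sheaf $\Omega^{1}_{\widetilde X/B}$ can have torsion along the non-smooth locus of $\widetilde\pi$, so a naive ``closed condition'' argument fails. The natural remedy is analytic: realize $\widetilde\sigma|_{\widetilde F}$ as a limit of the vanishing forms $\widetilde\sigma|_{\widetilde F_{b}}=0$ along a one-parameter family of smooth general fibers degenerating to $\widetilde F$, so that the limit vanishes on the smooth part of $\widetilde F$. To rule out $F\subset X_{\mathrm{sing}}$, I would use Kaledin's stratification of the symplectic variety $(X,\sigma)$ into symplectic leaves of even codimension at least $2$: if $F$ sits inside the closure of a leaf $L$ of dimension $2m<2n$, running the same resolution-and-isotropy argument inside (a resolution of) $\overline{L}$ gives $\dim F\le m<n$, contradicting the lower bound.
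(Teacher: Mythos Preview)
The paper does not give its own proof of this proposition; it is quoted from \cite{schwald:fibration}, so there is no in-paper argument to compare against.  I evaluate your proposal on its own merits.

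Your lower bound is correct, and the resolution-plus-isotropy strategy for the upper bound is the standard approach; you have also correctly flagged the propagation of isotropy from general to special fibers as the technical heart.  Note, though, that your upper-bound step already \emph{uses} non-containment: you pick $p\in\widetilde F$ with $\mu(p)\in F\cap X_{\mathrm{reg}}$ in order to invoke non-degeneracy of $\widetilde\sigma_p$.  So the whole argument hinges on your final step.

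That final step has a genuine gap.  To ``run the same resolution-and-isotropy argument inside $\overline{L}$'' you need $\pi|_{\overline{L}}\colon\overline{L}\to\pi(\overline{L})$ to be a Lagrangian fibration of the symplectic variety $(\overline{L},\sigma_L)$, i.e.\ a \emph{general} fiber of $\pi|_{\overline{L}}$ must be Lagrangian for $\sigma_L$.  But $\overline{L}\subset X_{\mathrm{sing}}$ is disjoint from a general fiber of $\pi$ (those lie in $X_{\mathrm{reg}}$), so $\pi(\overline{L})\subsetneq B$ and every fiber of $\pi|_{\overline{L}}$ lies over a \emph{special} point of $B$; nothing you have established forces such fibers to be Lagrangian in $\overline{L}$.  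In the extreme case $\overline{L}$ could sit inside a single fiber of $\pi$, so $\pi|_{\overline{L}}$ is constant and there is no fibration to which your isotropy argument applies.  The Poisson viewpoint helps only partially: the functions $f_i=\pi^*b_i$ Poisson-commute on $X$ and hence on each leaf $L$, but this makes their common level sets \emph{coisotropic} in $(L,\sigma_L)$, not isotropic, so one cannot conclude $\dim F\le m$.  The non-containment in $X_{\mathrm{sing}}$ therefore needs a different mechanism than simply restricting the argument to a smaller leaf.
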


\begin{lemma}\label{lem-lag-torus}
Let $(X,\sigma)$ be an irreducible symplectic variety and $\pi\colon X\to B$ be a surjective morphism to a projective variety $B$. If $0<\dim(B)<\dim(X)$, then $\dim(B)=\frac{1}{2}\dim(X)$ and each general fiber of $\pi$ contains a Lagrangian torus of $(X,\sigma)$.
\end{lemma}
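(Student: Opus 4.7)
The plan is to prove this as a singular analogue of Matsushita's theorem~\cite{Matsushita-15} on fibrations of irreducible hyperk\"ahler manifolds. The two essential tools I would rely on are the Beauville--Bogomolov--Fujiki form $q$ on $\rH^2(X,\mathbb{R})$ together with Fujiki's relation $\int_X\alpha^{2n}=c_n\, q(\alpha)^n$, both available for projective irreducible symplectic varieties by the Hodge-theoretic work of \cite{BL22, schwald:fibration}, as well as the equidimensionality statement Proposition~\ref{prop-equidim}.

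First I would replace $B$ by its Stein factorization under $\pi$ so that the fibers become connected and $B$ is normal (leaving $\dim B$ unchanged). Fixing an ample class $A$ on $B$ and setting $L:=\pi^*A$ and $d:=\dim B$, I would then observe that $L^{d+1}=\pi^*(A^{d+1})=0$, while $L^d=\pi^*(A^d)$ is a positive multiple of the class of a general fiber and hence nonzero. Since $d<2n$, Fujiki's formula gives $c_n\, q(L)^n=L^{2n}=0$, so $q(L)=0$. I would next invoke the Matsushita-style dichotomy for a nef class lying in the null cone of $q$---transferred to the singular setting via \cite{schwald:fibration}---to conclude $L^{n+1}=0$ and $L^n\ne 0$, and comparison with the vanishing above forces $d=n=\tfrac12\dim X$.

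With $d=n$ in hand, at a smooth point $x$ of a general fiber $F$ (lying in $X_{\reg}$, over the locus where $B$ is smooth and $\pi$ is smooth), $\ker d\pi_x\subset T_xX$ has dimension $n$. The standard computation of \cite{Matsushita-15} carried out on this smooth open subset---using that the composition $\pi^*\Omega_B\hookrightarrow\Omega_X\xrightarrow{\sim}T_X\twoheadrightarrow\pi^*T_B$ induced by $\sigma$ vanishes---would show that $\ker d\pi_x$ is self-orthogonal and hence Lagrangian; Proposition~\ref{prop-equidim} then implies that $\pi$ is a Lagrangian fibration in the sense of Definition~\ref{def-lag-fib}. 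Finally, on the open $V\subset B$ over which both $X$ and $\pi$ are smooth, $\pi$ restricts to a proper smooth Lagrangian fibration of a holomorphic symplectic manifold, so the Arnold--Liouville theorem produces complex tori as its fibers, which are abelian varieties by projectivity; taking Zariski closures then yields Lagrangian tori inside general fibers of the original map. I expect the main obstacle to be the dimension equality $d=n$: carrying Matsushita's null-cone dichotomy through to the singular context genuinely requires the full positivity/signature properties of the BBF form on projective irreducible symplectic varieties, as supplied by \cite{BL22, schwald:fibration}.
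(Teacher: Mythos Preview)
Your outline is essentially correct, but you are reproving a theorem the paper simply cites. After normalizing $B$ and passing to the Stein factorization (exactly as you do), the paper invokes \cite[Theorem~3]{schwald:fibration} directly: Schwald has already carried Matsushita's argument through to the singular projective setting, establishing both $\dim B=\tfrac12\dim X$ and that general fibers are Lagrangian abelian varieties. So the paper's proof is two lines, whereas you sketch the content of that reference.

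Your sketch of that content is broadly right, with one caveat. The step where you assert that the composition $\pi^*\Omega_B\hookrightarrow\Omega_X\xrightarrow{\sim}T_X\twoheadrightarrow\pi^*T_B$ vanishes is exactly the statement that fibers are coisotropic; it is not a free consequence of the symplectic structure and needs an argument (in Matsushita's original papers this comes out of the analysis of $R^i\pi_*\cO_X$ and the BBF form, not from a bare sheaf-map computation). Since you already plan to cite \cite{schwald:fibration} for the null-cone dichotomy, you may as well cite it for the whole package and shorten the argument to match the paper's. What your longer route buys is transparency about which ingredients (Fujiki relation, signature of $q$, Arnold--Liouville) are actually used; what the paper's route buys is brevity and a clean dependence on a single reference.
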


\begin{proof}
By taking the normalization of $B$ and the Stein factorization, we can assume that $B$ is normal and $\pi$ has connected fibers. Then the result follows from \cite[Theorem 3]{schwald:fibration}.
\end{proof}

\section{Existence of compactifications}\label{sec-compact}

In this section, we are going to prove several existence criteria for compactifications of Lagrangian fibrations. The main results in this section are Theorem \ref{thm-sym} and Theorem \ref{thm-irr-sym}. We will freely use notations in \cite{kollar-mori}.

\subsection{Symplectic compactifications}
First, we use techniques from the minimal model program (MMP) to prove an existence result for symplectic compactifications with $\QQ$-factorial terminal singularities. The following result was established in \cite{sacca:lag-fibration} before.

\begin{theorem}[Sacc\`a]\label{thm-sym}
Let $(X_0,\sigma_0)$ be a holomorphic symplectic manifold with a dominant morphism $$\pi_0\colon X_0\to B$$ to a normal projective variety $B$ and set $U:=\pi_0(X_0)$. Assume that $\mathrm{codim}_{B}(B\setminus U)\geq 2$ and $\sigma_0$ extends to a holomorphic $2$-form on a smooth compactification $\wt{X}$ of $\pi_0$. Assume furthermore that very general fibers of $\pi_0$ are connected and projective.

Then there exists a $\QQ$-factorial, terminal, projective symplectic variety $(X,\sigma)$ with an algebraic fiber space $\pi\colon X\rightarrow B$ extending $\pi_0$ and $\sigma|_{X_0}=\sigma_0$.
\end{theorem}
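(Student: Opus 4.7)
Plan. The idea is to construct $X$ as the output of a relative minimal model program over $B$ starting from a smooth projective compactification of $\pi_0$; the symplectic form, combined with the codimension hypothesis on $B\setminus U$, will force the canonical divisor of the output to vanish.

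Replacing the given smooth compactification $\wt{X}$ by a further blowup resolving indeterminacies of $\wt{X}\dashrightarrow B$, I may assume $\wt{X}$ is smooth projective and carries a projective morphism $\wt\pi\colon\wt{X}\to B$ extending $\pi_0$, with $\wt\sigma$ still a holomorphic $2$-form on $\wt{X}$ (pullback of a holomorphic form under blowups with smooth centers remains holomorphic). Setting $2n=\dim X_0$, the wedge $\wt\sigma^{\wedge n}\in H^0(\wt{X},\omega_{\wt{X}})$ is nowhere vanishing on $X_0$, so $K_{\wt{X}}=\mathrm{div}(\wt\sigma^{\wedge n})$ is an effective divisor supported in $\wt{X}\setminus X_0$. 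Very general fibers of $\pi_0$ being projective and connected, they coincide with the corresponding fibers of $\wt\pi$, so no component of $\wt{X}\setminus X_0$ dominates $B$; thus $K_{\wt{X}}$ is $\wt\pi$-vertical.

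I would then run a $K_{\wt{X}}$-MMP over $B$ with scaling by an ample divisor. By BCHM together with termination results for effective, $\pi$-vertical canonical divisors, this program terminates in finitely many steps and yields a $\mathbb{Q}$-factorial terminal projective variety $X$ with a projective morphism $\pi\colon X\to B$ and $K_X$ that is $\pi$-nef. Since every contraction and flip is supported in $\mathrm{supp}(K_{\wt{X}})\subset\wt{X}\setminus X_0$, the open $X_0$ embeds into $X$ as an open subset over which $\pi|_{X_0}=\pi_0$. Moreover, the MMP transports the reflexive pushforward of $\wt\sigma$ to a reflexive $2$-form $\sigma$ on $X$ with $\sigma|_{X_0}=\sigma_0$.

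Finally, I would argue $K_X=0$. Since $K_X$ restricts to zero on very general fibers of $\pi$ (which lie in $X_0$ and have trivial canonical class), a relative abundance/base point free argument applied to the $\pi$-nef effective divisor $K_X$ gives $K_X\sim_{\pi,\mathbb{Q}}0$, so $K_X=\pi^{*}M$ for an effective $\mathbb{Q}$-divisor $M$ on $B$. Since $\sigma_0^{\wedge n}$ is nowhere vanishing on $X_0$, $\mathrm{supp}(K_X)$ is disjoint from $X_0$; consequently, if any prime divisor $E$ in $\mathrm{supp}(M)$ met $U$, then $\pi^{-1}(E)\cap X_0=\pi_0^{-1}(E\cap U)$ would be nonempty and contained in $\mathrm{supp}(K_X)$, a contradiction. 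Hence $\mathrm{supp}(M)\subset B\setminus U$, which has codimension $\geq 2$ in $B$; as $M$ is a Weil divisor, $M=0$ and $K_X=0$. Then $\sigma^{\wedge n}$ trivializes $\omega_X$ on $X_{\mathrm{reg}}$, so $\sigma$ is holomorphic symplectic there, and Lemma~\ref{lem-equiv-sym} identifies $(X,\sigma)$ as the desired $\mathbb{Q}$-factorial terminal projective symplectic variety extending $\pi_0$. The principal technical obstacle is termination of the relative MMP for the effective but not necessarily big divisor $K_{\wt{X}}$, together with the relative abundance step used to obtain $K_X=\pi^{*}M$; once these are in place, the codimension hypothesis handles the rest cleanly.
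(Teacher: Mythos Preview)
Your overall strategy---compactify, observe $K_{\wt X}$ is effective and supported in $\wt X\setminus X_0$, run a relative MMP over $B$, and argue $K_X\sim_\QQ 0$---is exactly the paper's. The two steps you flag as obstacles are precisely where the paper supplies concrete inputs, and your abundance step in fact has a further gap even if abundance were available.

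For termination: the paper first shrinks to a dense open $W\subset U$ over which $\pi_0$ is smooth and projective with connected fibers (using that very general fibers are projective, via \cite[Corollaire~15.7.11]{EGA4-3}); then $\wt\pi^{-1}(W)=\pi_0^{-1}(W)$ is already a good minimal model over $W$, and \cite[Theorem~1.1]{HX} (cf.\ \cite[Proposition~2.5]{lai:fibered}) guarantees the $K_{\wt X}$-MMP with scaling over $B$ terminates in a good minimal model. This is the precise statement replacing your appeal to ``BCHM together with termination results.'' For $K_X\sim_\QQ 0$: the paper avoids abundance entirely. Writing $K_X\sim_\QQ D:=\rho_*G\geq 0$ with $\supp(D)\cap X_0=\emptyset$, if $D\neq 0$ then either $\codim_B\pi(D)\geq 2$ (so $D$ is $\pi$-exceptional), or some component $E$ of $D$ has $\overline{\pi(E)}$ a divisor meeting $U$; surjectivity of $X_0\to U$ then furnishes a second prime divisor $\Gamma\subset X_0$ with $\overline{\pi(\Gamma)}=\overline{\pi(E)}$ and $\Gamma\neq E$, so $E$ is of insufficient fiber type. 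Either way $D$ is degenerate in the sense of \cite[Definition~2.9]{lai:fibered}, and \cite[Lemma~2.10]{lai:fibered} gives $\mathbf{B}_-(K_X/B)\neq\emptyset$, contradicting $\pi$-nefness of $K_X$. Note that your abundance route, even granting $K_X\sim_{\pi,\QQ}0$, only yields $K_X\sim_\QQ\pi^*M$, not $D=\pi^*M$ as divisors; since the effective representative $D$ need not be a pullback (insufficient-fiber-type components are exactly the obstruction), you cannot read off $\supp(M)\subset B\setminus U$ from $\supp(D)\cap X_0=\emptyset$. Lai's argument sidesteps this cleanly.
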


\begin{proof}
Since $\pi_0$ is dominant and finite type, $U\subset B$ is dense and constructible by \cite[\href{https://stacks.math.columbia.edu/tag/054J}{Tag 054J}]{stacks-project}. Hence by \cite[\href{https://stacks.math.columbia.edu/tag/0540}{Tag 0540}]{stacks-project}, we can find an open dense subscheme $U'\subset U$. From \cite[\href{https://stacks.math.columbia.edu/tag/052A}{Tag 052A}]{stacks-project}, we can shrink $U'$ to ensure that $\pi_0^{-1}(U')\to U'$ is faithfully flat. Then from the projectivity of very general fibers and \cite[Corollaire 15.7.11]{EGA4-3}, there is an open dense subscheme $W\subset U'\subset U$ such that $\pi^{-1}_0(W)\to W$ is projective. Moreover, we can further shrink $W$ to assume that $\pi^{-1}_0(W)\to W$ is smooth with connected fibers.

Let $\wt{\pi}\colon \wt{X}\to B$ be the smooth compactification of $\pi_0$ in our assumption. In other words, $\wt{X}$ is a smooth projective variety and $\wt{\pi}$ is a projective morphism such that there is an open immersion $X_0\hookrightarrow \wt{X}$ over $B$. By our assumption, there exists $\wt{\sigma}\in \rH^0(\wt{X}, \Omega^2_{\wt{X}})$ such that $\wt{\sigma}|_{X_0}=\sigma_0$. Therefore, the natural map $s\colon \mathcal{T}_{\wt{X}}\to \Omega_{\wt{X}}$ induced by $\wt{\sigma}$ is an isomorphism over $X_0$. In particular, $s$ is injective and $\mathrm{coker}(s)$ is supported in $\wt{X}\setminus X_0$. This means $K_{\wt{X}}\sim_{\QQ} G$, where $G=\frac{1}{2}c_1(\mathrm{coker}(s))$ is an effective $\QQ$-divisor such that $\mathrm{Supp}(G)\subset \wt{X}\setminus X_0$.

Since $\wt{\pi}^{-1}(W)=\pi_0^{-1}(W)$, 
$\wt{\pi}^{-1}(W)$ is a good minimal model over $W$, then \cite[Theorem 1.1]{HX} (see also \cite[Proposition 2.5]{lai:fibered}) implies that we can run a $K_{\wt{X}}$-MMP of $\wt{X}$ with scaling over $B$ and get a $\QQ$-factorial and terminal good minimal model $\rho\colon \wt{X}\dashrightarrow X$ over $B$. As  $K_{\wt{X}}\sim_{\QQ} G$ is effective, and $\mathrm{Supp}(G)\subset \wt{X}\setminus X_0$, from the process of MMP, we know that the ${\rm Ex}(\rho)\subset{\rm Supp}(G)$ does not meet $X_0$, i.e. $X_0\hookrightarrow \wt{X}$ induces an open immersion $X_0\hookrightarrow X$. We denote the natural morphism $X\to B$ by $\pi$.

Now, we prove $K_{X}\sim_{\QQ} 0$. As $\rho\colon\wt{X}\dashrightarrow X$ is the output of MMP, $K_{X}\sim_{\QQ} D:=\rho_*G$ is effective. 
Note that if $D$ is non-zero, then it is a degenerate divisor in the sense of \cite[Definition 2.9]{lai:fibered}. Indeed, if $\codim_B(\pi(D))\geq 2$, then $D$ is $\pi$-exceptional in the sense of \cite[Definition 2.9]{lai:fibered}. If $\codim_B(\pi(D))=1$, as $\codim_{B}(B\setminus U)\geq 2$, we know that  $\pi(D)$ meets $U$. Let $E$ be an irreducible component of ${\rm Supp}(D)$ whose image meets $U$. Since $X_0\to U$ is surjective, there exists a prime divisor $\Gamma$ of $X_0$ which satisfies that $\overline{\pi(\Gamma)}= \pi(E)$.
Since $\Gamma\nsubseteq D$, $\Gamma \neq E$. Hence, $E$ is of insufficient fiber type in the sense of \cite[Definition 2.9]{lai:fibered}. Therefore, if $D$ is non-zero,  \cite[Lemma 2.10]{lai:fibered} implies the diminished base locus $\mathbf{B}_-(K_{X}/B)\neq 0$, which contradicts the nefness of $K_{X}$ over $B$. This proves $K_{X}\sim_{\QQ} 0$.

Finally, as $\rho^{-1}$ does not contract divisors, the $2$-form $\wt{\sigma}$ on $\wt{X}$ gives a non-zero reflexive $2$-form $\sigma$ on $X$ such that $\sigma|_{X_0}=\sigma_0$. Combining with $K_{X}\sim_{\QQ} 0$, we conclude that $\sigma$ is a holomorphic symplectic form on $X$ and $(X,\sigma)$ is a symplectic variety by Lemma \ref{lem-equiv-sym}.
\end{proof}

\subsection{Irreducible symplectic compactifications}

Based on Theorem \ref{thm-sym}, we can establish a criterion for the existence of irreducible symplectic compactifications in Theorem \ref{thm-irr-sym}. We divide the proof into several lemmas.

We start with the following observation.

\begin{lemma}\label{lem-abelian-lag}
Let $A$ be an abelian variety with a holomorphic symplectic form $\sigma$ and $\pi\colon A\to B$ be a Lagrangian fibration onto a normal projective variety $B$. Then each fiber of $\pi$ is isomorphic to an abelian subvariety $A'\subset A$.
\end{lemma}

\begin{proof}
Up to translation, we can assume that $A':=\pi^{-1}(\pi(e))$ is an abelian variety. We set $A'':=A/A'$. Then $\pi$ can be factored as $A\to A''\xra{p} B$. Since both $\pi$ and the quotient map $A\to A''$ have connected fibers of the same dimension $\frac{1}{2}\dim(A)$, then $p$ is an isomorphism and the result follows.
\end{proof}

Next, we consider a special case that $X$ 
 is already decomposed as in Theorem \ref{thm-decomposition} without taking finite quasi-\'etale covering.

\begin{lemma}\label{prop-key}
Let $(X,\sigma)$ be a projective symplectic variety and $\pi\colon X\to B$ be a surjective morphism onto a normal projective variety $B$. Assume that we have a decomposition
\[X=A\times \prod_{i\in I} Y_i \times \prod_{j\in J} Z_j\]
such that $A$ is an abelian variety, $Y_i$ is a strict Calabi--Yau variety, and $Z_j$ is an irreducible symplectic variety. If connected components of very general fibers of $\pi$ are simple Lagrangian tori, then $X$ is either an irreducible symplectic variety or $X=A$.
\end{lemma}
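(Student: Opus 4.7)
The plan is to show that under the stated hypotheses the decomposition $X = A \times \prod_i Y_i \times \prod_j Z_j$ must collapse to a single irreducible symplectic factor, so that $X$ itself is irreducible symplectic. By the K\"unneth formula and the vanishing $H^0(Y_i,\Omega^{[1]}) = H^0(Z_j,\Omega^{[1]}) = 0$, all cross-term contributions to $H^0(X,\Omega^{[2]})$ disappear, and the symplectic form decomposes as
\[
\sigma = \pi_A^{*}\sigma_A + \sum_i \pi_{Y_i}^{[*]}\sigma_{Y_i} + \sum_j \pi_{Z_j}^{[*]}\sigma_{Z_j},
\]
with each summand pulled back from its factor. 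Nondegeneracy of $\sigma$ forces each summand to be symplectic on its factor; since a strict Calabi--Yau variety of dimension $\geq 3$ has no global reflexive $2$-form, every $Y_i$ must be a point (the dim-$2$ case is K3-like and can be absorbed into the $Z_j$-list). Henceforth I assume $X = A \times \prod_j Z_j$ with $\sigma_A$ and every $\sigma_{Z_j}$ symplectic on the respective factor.

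Next let $T$ be a connected component of a very general fiber of $\pi$, a simple Lagrangian torus by hypothesis, and denote the projections by $f = \pi_A|_T\colon T\to A$ and $g_j = \pi_{Z_j}|_T\colon T \to Z_j$. The Lagrangian condition $\sigma|_T = 0$ becomes the identity
\[
f^{*}\sigma_A + \sum_j g_j^{*}\sigma_{Z_j} = 0 \qquad \text{in } H^0(T,\Omega^2_T).
\]
I would then argue that every $g_j$ is constant. Using the simplicity of $T$, the Stein factorization of $g_j$ is either trivial (in which case $g_j$ is constant) or an isogeny onto an abelian variety $T_j'$ that finitely covers its image $V_j \subset Z_j$. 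In the latter case I would derive a contradiction by using $T_j'$ to produce a quasi-\'etale cover of $Z_j$ with nonvanishing $H^0(\Omega^{[1]})$, contradicting the definition of irreducible symplectic variety applied to $Z_j$ and its covers.

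Once every $g_j$ is constant, $T$ is contained in a fiber $A \times \{(z_j)_j\}$ of the projection $X \to \prod_j Z_j$, so $T$ is a translate of an abelian subvariety of $A$ that is isotropic for $\sigma_A$. Lemma \ref{lem-abelian-lag} then forces this abelian subvariety to be a single point, contradicting $\dim T = \tfrac12 \dim X > 0$ unless $A = \mathrm{Spec}(\CC)$. Hence $A$ is trivial and $X = \prod_j Z_j$. Rerunning the constancy argument, now applied to the projections onto the individual $Z_j$, similarly forces $T$ to be a point unless $|J|=1$. Therefore $X = Z_1$ is an irreducible symplectic variety, as desired.

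The main obstacle is the constancy of the $g_j$ in the second paragraph: since irreducible symplectic varieties can themselves contain Lagrangian abelian subvarieties, nonconstant morphisms from abelian varieties into $Z_j$ are not excluded by the formal definition alone. The argument must carefully combine the simplicity of $T$, the Lagrangian relation above, and the rigidity of reflexive forms on all finite quasi-\'etale covers to rule out the nontrivial branch of the Stein factorization.
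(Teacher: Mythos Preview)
Your approach has a fatal flaw: the claim that every projection $g_j\colon T\to Z_j$ is constant is not merely hard to prove---it is \emph{false}. Indeed, the paper establishes the opposite (its Step~2): whenever $\dim Z_j>0$, the image of the Lagrangian torus $T$ in $Z_j$ has positive dimension. The reason is a dimension count: if $g_k$ were constant, then $T$ would sit inside a slice $\{z_k\}\times\prod_{j\neq k}Z_j$ and be isotropic for the symplectic form $\bigoplus_{j\neq k}\pr_j^{[*]}\sigma_j$ there, forcing $\dim T\le\tfrac12(\dim X-\dim Z_k)<\tfrac12\dim X$, a contradiction. So your whole scheme of first making the $g_j$ constant and then trapping $T$ inside $A$ cannot work; in the final outcome $X=Z_1$, the projection $g_1$ is the identity on $T$ and certainly not constant. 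Your suggested mechanism---using the image $V_j\subset Z_j$ to build a quasi-\'etale cover of $Z_j$ carrying a nonzero reflexive $1$-form---also fails on its face: an abelian subvariety of $Z_j$ gives no cover of $Z_j$, and irreducible symplectic varieties with Lagrangian fibrations contain plenty of abelian subvarieties.

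The paper's argument, after the same initial reductions you carry out, goes in the opposite direction. It uses the splitting $\Pic(X)=\prod_j\Pic(Z_j)$ (valid since $\rH^1(Z_j,\cO_{Z_j})=0$ for the symplectic factors) to write $\pi^*\cO_B(1)=\bigotimes_j\pr_j^*L_j$ with each $L_j$ globally generated and trivial on $\pr_j(T)$. From this one deduces that the restriction $\pi_1$ of $\pi$ to a slice $Z_1\times\prod_{j\neq 1}\{z_j\}$ contracts each $\pr_1(T)$, hence (by density) satisfies $0<\dim\im(\pi_1)<\dim Z_1$. Now the Matsushita-type structure result for irreducible symplectic varieties (Lemma~\ref{lem-lag-torus}) applied to $\pi_1$ produces a Lagrangian torus $A'\subset Z_1$ of dimension $\tfrac12\dim Z_1$ inside a very general fiber of $\pi$; simplicity of that fiber forces $A'$ to be the whole fiber, so $\dim Z_1=\dim X$ and the decomposition collapses. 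In short, simplicity is used not to make $T$ project to points, but to make a half-dimensional abelian subvariety coming from a single factor fill the entire fiber of $\pi$.
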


\begin{proof}
 Note that the assumption on very general fibers remains true after taking Stein factorization of $\pi$, so we may assume that $\pi$ has connected fibers and is a Lagrangian fibration. Since $\rH^0(Y_i, \Omega^{[1]}_{Y_i})=\rH^0(Z_j, \Omega^{[1]}_{Z_j})=0$, we know that $\sigma$ is a sum of pull-back of symplectic forms on factors of $X$. Then we see $Y_i=\Spec(\CC)$ for any $i\in I$. We can write
\[X=\prod_{j=0}^m Z_j\, ,\]
where $Z_0:=A$ (or a point) and $Z_j$ is an irreducible symplectic variety of dimension $\dim(Z_j)>0$ for each $j\neq 0$. We have
\begin{equation}\label{eq-sigma}
\sigma=\bigoplus_{j=0}^m \pr_j^{[*]}(\sigma_j),
\end{equation}
where $\pr_j\colon X\to Z_j$ is the projection and $\sigma_j\in \rH^0(Z_j, \Omega^{[2]}_{Z_j})$ is a holomorphic symplectic form for each $0\leq j\leq m$. To prove the proposition, we need to show $m=1$ and $\dim(Z_0)=0$. To this end, we divide the proof into several steps.

\medskip

\textbf{Step 1.}

Since $\rH^1(Z_j, \oh_{Z_j})=0$ for each $j\neq 0$, we have
\[\Pic(X)=\prod_{j=0}^m \Pic(Z_j).\]
Then if we fix a very ample line bundle $\oh_B(1)$ on $B$, we can write
\[\pi^*\oh_B(1)=\bigotimes^m_{j=0} \pr_j^*L_j,\]
where $L_j\in \Pic(Z_j)$. Note that $L_k$ is globally generated for each $0\leq k\leq m$, since $$\pi^*\oh_B(1)|_{Z_k\times \prod^m_{j=0, j\neq k}\{z_j\}}\cong L_k$$
for any point $z_j\in Z_j, j\neq k$.

Let $T\subset B$ be the locus such that $A_t:=\pi^{-1}(t)$ is a simple Lagrangian torus for each $t\in T$. By assumption (3), $T$ contains a non-empty complement of a countable union of closed subvarieties of $B$, hence $T$ is dense in $B$. As $$\pi^*\oh_B(1)|_{A_t}\cong \bigotimes^m_{j=0} (\pr_j^*L_j)|_{A_t}\cong \oh_{A_t}$$ and each $\pr_j^*L_j$ is globally generated, the only possibility is 
\begin{equation}\label{eq-trivial}
    (\pr_j^*L_j)|_{A_t}\cong \oh_{A_t}
\end{equation} 
for each $0\leq j\leq m$. 

\medskip

\textbf{Step 2.}

Now we claim that the image of $A_t\hookrightarrow X\xra{\pr_i} Z_i$ has a positive dimension whenever $\dim(Z_i)>0$ and $1\leq i\leq m$. Indeed, if the image of $A_t\hookrightarrow X\xra{\pr_i} Z_i$ is a point $z_i\in Z_i$ and $\dim(Z_i)>0$ for some $0\leq i\leq m$, then we have $$A_t\subset \{z_i\}\times \prod^m_{j=0, j\neq i} Z_j\cong \prod^m_{j=0,j\neq i} Z_j \, .$$ Therefore, $A_t\subset \prod^m_{j=0, j\neq i} Z_j$ is isotropic with respect to the symplectic form $\bigoplus_{j=0, j\neq i}^m \pr_j^{[*]}\sigma_j$, which gives $\dim(A_t)\leq \frac{1}{2}(\dim(X)-\dim(Z_i))$ and contradicts $\dim(A_t)=\frac{1}{2}\dim(X)$.

\medskip

\textbf{Step 3.} Let $Z_i$ ($0\le i\le m$) with $\dim Z_i>0$. For any point $z_j\in Z_j, j\neq i$, we consider the composition
\[\pi_i\colon Z_i\times \prod^m_{j\neq i}\{z_j\}\hookrightarrow X\xra{\pi} B.\]
In this step, we show
\begin{equation}\label{eq-dim}
    0<\dim(\im(\pi_i))<\dim(Z_i).
\end{equation}

If we denote by $C_t\subset Z_i$ the image of $A_t\hookrightarrow X\xra{\pr_i} Z_i$, then by Step 2 we have $\dim(C_t)\geq 1$ for $t\in T$. 
Then $L_i|_{C_t}\cong \oh_{C_t}$ for each $t\in T$ as its pull back to $A_t$ is $ (\pr_i^*L_i)|_{A_t}\cong \oh_{A_t}$ by \eqref{eq-trivial}. Therefore, for each $t\in T$, the image of the composition $$C_t\hookrightarrow Z_i \cong Z_i\times \prod^m_{j=0,j\neq i}\{z_j\}\xra{\pi_i} B$$
is a single point, as the pull-back of the very ample line bundle $\oh_B(1)$ along this morphism to $C_t$ is isomorphic to the pull-back of $L_i$ on $Z_i\times \prod^m_{j=0,j\neq i}\{z_j\}\cong Z_i$ to $C_t$, which is trivial. In particular, we see that $C_t$ is contracted by $\pi_i$.

Since $T\subset B$ is dense, we know that $\pi^{-1}(T)=\bigcup_{t\in T} A_t\subset X$ is also dense. Thus, $$\pr_i(\pi^{-1}(T))=\bigcup_{t\in T} C_t$$ is dense in $Z_i$ as well. As we have already seen that $C_t$ is contracted by $\pi_i$ for each $t\in T$, this implies that $\pi_i$ is not generically finite. Hence, we get $\dim(\im(\pi_i))<\dim(Z_i)$ for any choice of points $z_j\in Z_j, j\neq i$.

Moreover, we have $\dim(\im(\pi_i))>0$, otherwise $Z_i\times \prod^m_{j=0,j\neq i}\{z_j\}\hookrightarrow X$ is isotropic by Proposition \ref{prop-equidim} and contradicts \eqref{eq-sigma}. Therefore, for any choice of points $z_j\in Z_j, j\neq i$, we have $ 0<\dim(\im(\pi_i))<\dim(Z_i)$.

\medskip

 \textbf{Step 4.} In this step, we show $X=Z_0$ or $X=Z_1$. 

If $X$ is not equal to $Z_0$, we may fix $i>0$. We fix a point $$x=\prod^m_{j=0}\{z_j\}\in X$$ such that $\pi(x)\in T$. Since $T$ is a countable intersection of open dense subsets in $B$ and $T\cap {\rm im}(\pi_i)$ is non-empty, we see that $T\cap \im(\pi_i)$ is a countable intersection of open dense subsets in the variety $\im(\pi_i)$ (since we work over the uncountable field $\bC$). Therefore, we can change $z_i$ (but fix $z_j$ ($j\neq i$)) to assume that $\pi_i^{-1}(\pi(x))$ is a very general fiber of 
\[
\pi_i\colon Z_i\times\prod^m_{j\neq i}\{z_j\} \to {\rm Im}(\pi_i)\subset B
\] by keeping $\pi(x)\in T$. Hence Lemma \ref{lem-lag-torus} and \eqref{eq-dim} imply that $\pi_i^{-1}(\pi(x))$ contains an abelian variety $A'$ of dimension $\frac{1}{2}\dim(Z_i)$. Since $\pi_i^{-1}(\pi(x))\subset \pi^{-1}(\pi(x))$ and $x\in T$, we know that $A'$ is contained in $\pi^{-1}(\pi(x))$, which is a simple abelian variety by assumption. Therefore, the only possibility is $A'=\pi^{-1}(\pi(x))$, which implies $\dim(Z_i)=\dim(X)$. This shows that $m=1$ and $\dim(Z_0)=0$ as desired, and we can conclude that $X=Z_1$ is an irreducible symplectic variety.

\end{proof}

\begin{lemma}\label{lem-isogeny}
Let $(X,\sigma)$ be a projective symplectic variety and $\pi\colon X\to B$ be a surjective morphism onto a normal projective variety $B$. Let $U\subset B$ be a smooth open dense subscheme with $\pi^{-1}(U)\to U$ smooth, and we assume each connected component of $\pi^{-1}(b)$ is a Lagrangian torus for a point $b\in U$. Then for any normal variety $Y$ with a finite quasi-\'etale covering $f\colon Y\to X$,  each connected component of $(\pi\circ f)^{-1}(b)$ is a Lagrangian torus of the symplectic variety $(Y,f^{[*]}(\sigma))$ and is isogenous to a component of $\pi^{-1}(b)$.
\end{lemma}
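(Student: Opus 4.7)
The plan is to upgrade the quasi-\'etale property of $f$ to genuine \'etaleness over a well-chosen open of $X$ via purity of the branch locus, and then analyze the geometry of the fibers directly.

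First, because $(X,\sigma)$ is a symplectic variety, $X_{\reg}$ is smooth, and the hypothesis that $\pi^{-1}(U) \to U$ is smooth with $U$ smooth forces $\pi^{-1}(U) \sse X_{\reg}$. Since $f\colon Y \to X$ is a finite morphism from a normal variety that is \'etale in codimension one, its restriction $f^{-1}(X_{\reg}) \to X_{\reg}$ is a finite map from a normal scheme to a regular scheme which is \'etale outside a codimension-two closed subset; by the Zariski--Nagata purity of the branch locus, it is \'etale everywhere. In particular, $f$ is \'etale over $\pi^{-1}(U)$, and $f^{-1}(\pi^{-1}(U)) \sse Y_{\reg}$.

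Next, fix any connected component $\wt{T}$ of $(\pi\circ f)^{-1}(b) = f^{-1}(\pi^{-1}(b))$, and let $T$ be the connected component of $\pi^{-1}(b)$ containing $f(\wt{T})$; by hypothesis, $T$ is a Lagrangian torus in $(X,\sigma)$. Since $f$ is \'etale over $\pi^{-1}(U)$, the restriction $f|_{\wt{T}}\colon \wt{T} \to T$ is a connected finite \'etale cover of an abelian variety, so after choosing a base point $\wt{T}$ inherits the structure of an abelian variety and $f|_{\wt{T}}$ becomes an isogeny.

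It remains to verify that $\wt{T}$ is Lagrangian in $(Y, f^{[*]}(\sigma))$. The dimension count is immediate: since $f$ is finite, $\dim(\wt{T}) = \dim(T) = \frac{1}{2}\dim(X) = \frac{1}{2}\dim(Y)$. For isotropicity, on the locus $f^{-1}(X_{\reg})$ the map $f$ is \'etale between smooth schemes, so the reflexive pull-back $f^{[*]}(\sigma)$ agrees with the ordinary pull-back $f^{*}(\sigma)$; since $\wt{T} \sse f^{-1}(X_{\reg})$, we obtain
\[
f^{[*]}(\sigma)\big|_{\wt{T}} = (f|_{\wt{T}})^{*}(\sigma|_T) = 0,
\]
where the last equality uses that $T$ is Lagrangian in $X$. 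The only nontrivial input is the purity of the branch locus, needed to promote quasi-\'etaleness to genuine \'etaleness over $\pi^{-1}(U)$; once this is available, the remainder of the argument is essentially formal, so I do not anticipate further obstacles.
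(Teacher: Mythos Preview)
Your proof is correct and follows essentially the same route as the paper: both arguments reduce to showing that $f$ is genuinely \'etale over the smooth locus $\pi^{-1}(U)$, after which the fiber $(\pi\circ f)^{-1}(b)\to \pi^{-1}(b)$ is finite \'etale and the Lagrangian and isogeny claims are immediate. The only difference is that you make the appeal to Zariski--Nagata purity explicit, whereas the paper simply asserts that $f^{-1}(\pi^{-1}(U))\to \pi^{-1}(U)$ is \'etale because $\pi^{-1}(U)$ is smooth.
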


\begin{proof}
Since $f$ is a finite quasi-\'etale covering, it is clear that $(Y,f^{[*]}(\sigma))$ is a symplectic variety. We set $\wt{\pi}:=\pi\circ f$.

As $X_0:=\pi^{-1}(U)$ is smooth, we know that $f^{-1}(X_0)\to X_0$ is a finite \'etale covering. Therefore, $\wt{\pi}^{-1}(b)\to \pi^{-1}(b)$ is a finite \'etale covering as well. Since $\pi^{-1}(b)$ is a union of Lagrangian tori of $(X,\sigma)$ and being Lagrangian is a local condition on tangent spaces, we see that each connected component of $\wt{\pi}^{-1}(b)$ is also a smooth Lagrangian subvariety of $(Y, f^{[*]}(\sigma))$. 

Moreover, each connected component of $\pi^{-1}(b)$ is an abelian variety by assumption. Then each connected component of $\wt{\pi}^{-1}(b)$ is also an abelian variety as $\wt{\pi}^{-1}(b)\to \pi^{-1}(b)$ is finite \'etale, and the corresponding finite \'etale covering onto a component of $\pi^{-1}(b)$ is an isogeny.
\end{proof}

Now, we can prove that a symplectic variety with a Lagrangian fibration is irreducible symplectic, provided very general fibers are simple Lagrangian tori.

\begin{proposition}\label{prop-irr-lag}
Let $(X,\sigma)$ be a projective symplectic variety and $\pi\colon X\to B$ be a surjective morphism onto a normal projective variety $B$ with general fibers Lagrangian. If connected components of very general fibers of $\pi$ are simple abelian vareities and $\pi$ is not isotrivial, then $(X,\sigma)$ is an irreducible symplectic variety.
\end{proposition}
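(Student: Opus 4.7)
To show that $(X,\sigma)$ is irreducible symplectic, the task reduces to verifying that for every finite quasi-\'etale cover $f\colon Y\to X$, the pair $(Y, f^{[*]}(\sigma))$ is a cohomological irreducible symplectic variety in the sense of Definition \ref{def-sym}(f). My plan is to reduce this assertion, for any such $f$, to Lemma \ref{prop-key} by producing a further finite quasi-\'etale cover $\widetilde{Y}\to Y$ that splits as a product of the three types of factors appearing in the decomposition theorem.

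Fix any finite quasi-\'etale morphism $f\colon Y\to X$. By Lemma \ref{lem-equiv-sym} and the remarks following it, $(Y, f^{[*]}(\sigma))$ is a projective symplectic variety; in particular $\omega_Y\cong \oh_Y$ and $Y$ is klt. Applying the decomposition theorem (Theorem \ref{thm-decomposition}) produces a finite quasi-\'etale cover $g\colon \widetilde{Y}\to Y$ and a product decomposition
\[
\widetilde{Y}\;=\;A\times \prod_{i\in I}Y_i \times \prod_{j\in J}Z_j
\]
of the type required as hypothesis of Lemma \ref{prop-key}. The composite $\pi\circ f\circ g\colon \widetilde{Y}\to B$ is the morphism onto a normal projective base to which I intend to apply that lemma.

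The crucial step is to check that very general fibers of $\pi\circ f\circ g$ still have connected components that are simple Lagrangian tori. By Proposition \ref{prop-equidim} and generic smoothness, there exists a smooth open dense subscheme $U\subset B$ over which $\pi$ is smooth; shrinking $U$, the hypothesis on $\pi$ provides a very general subset $T\subset U$ at whose points each connected component of $\pi^{-1}(b)$ is a simple Lagrangian torus of $(X,\sigma)$. For every $b\in T$, Lemma \ref{lem-isogeny} applied to the finite quasi-\'etale cover $f\circ g\colon \widetilde{Y}\to X$ shows that each connected component of $(\pi\circ f\circ g)^{-1}(b)$ is a Lagrangian torus of $(\widetilde{Y},(f\circ g)^{[*]}(\sigma))$ isogenous to a component of $\pi^{-1}(b)$. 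Since an isogeny of abelian varieties preserves simplicity, these components are themselves simple Lagrangian tori.

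Now Lemma \ref{prop-key} applies to $\widetilde{Y}\to B$ and yields that $(\widetilde{Y},(f\circ g)^{[*]}(\sigma))$ is an irreducible symplectic variety, and in particular is cohomological irreducible symplectic. Since $g\colon \widetilde{Y}\to Y$ is surjective, Lemma \ref{lem-coho-irr} transfers this property to $(Y,f^{[*]}(\sigma))$. As $f$ was arbitrary, $(X,\sigma)$ is an irreducible symplectic variety. The main obstacle I anticipate lies in the third step: tracking that simplicity of the torus components survives both quasi-\'etale pullbacks along $f$ and $g$, which is precisely what the combination of isogeny-preservation of simplicity and Lemma \ref{lem-isogeny} is designed to handle.
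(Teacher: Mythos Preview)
Your proposal is correct and follows essentially the same route as the paper: pass to an arbitrary quasi-\'etale cover $Y$, apply the decomposition theorem to obtain a split cover $\widetilde{Y}$, use Lemma \ref{lem-isogeny} together with the fact that simplicity is preserved under isogeny to verify the fiber hypothesis of Lemma \ref{prop-key}, and then descend via Lemma \ref{lem-coho-irr}. The only cosmetic difference is that you invoke Proposition \ref{prop-equidim} alongside generic smoothness to produce the open set $U$, whereas generic smoothness alone suffices (and the paper simply asserts the existence of such a $V$); this is harmless.
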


\begin{proof}
After taking Stein factorization, we can assume that $\pi$ is a Lagrangian fibration. Let $f\colon Y\to X$ be any finite quasi-\'etale morphism. To prove $(X,\sigma)$ is irreducible symplectic, it suffices to prove $(Y, f^{[*]}(\sigma))$ is a cohomological irreducible symplectic variety. From Lemma \ref{lem-coho-irr}, it suffices to find a cohomological irreducible symplectic variety with a finite quasi-\'etale covering onto $Y$.

To this end, let $V\subset B$ be the smooth open subscheme such that $\pi^{-1}(V)\to V$ is smooth. Since $(Y, f^{[*]}(\sigma))$ is a symplectic variety, using Theorem \ref{thm-decomposition}, we have a finite \'etale covering $g\colon \wt{Y}\to Y$ and a decomposition
\[\wt{Y}=A\times \prod_{i\in I} Y_i \times \prod_{j\in J} Z_j\]
such that $A$ is an abelian variety, $Y_i$ is a strict Calabi--Yau variety, and $Z_j$ is an irreducible symplectic variety. As $f\circ g$ is finite and quasi-\'etale, Lemma \ref{lem-isogeny} implies that each connected component of $(\pi\circ f\circ g)^{-1}(b)$ is a Lagrangian torus isogenous to $\pi^{-1}(b)$ for any very general point $b\in V$, which is simple by assumption. Therefore, applying Lemma \ref{prop-key} to $\pi\circ f\circ g\colon \wt{Y}\to B$, we see that $\wt{Y}$ is either an irreducible symplectic variety or an abelian variety $\wt{Y}=A$. 

To exclude the case $\wt{Y}=A$, we denote by $\wt{\pi}\colon A\to \wt{B}$ the Stein factorization of $\pi\circ f\circ g$. Then $\wt{\pi}$ is a Lagrangian fibration of $A$. By Lemma \ref{lem-abelian-lag}, $\wt{\pi}$ is isotrivial with the fiber $A'\subset A$. Therefore, $(\pi\circ f\circ g)^{-1}(b)$ is a disjoint union of $A'$ for any $b\in V$, and we see that there is an isogeny $A'\to \pi^{-1}(b)$ of degree at most $N:=\deg(\pi\circ f\circ g)$ for any $b\in V$. In particular, $\pi^{-1}(b)$ is simple. 

Now we fix an arbitrary ample divisor $H$ on $X$, then we have a family of polarised abelian varieties $\pi^{-1}(V)\to V$, which induces a morphism $p\colon V\to \mathbf{A}$ to the corresponding moduli space $\mathbf{A}$ of polarised abelian varieties given by $b\mapsto (\pi^{-1}(b), H|_{\pi^{-1}(b)})$. Note that the set of isomorphism classes of abelian varieties $A''$ that $A'$ has an isogeny of degree at most $N$ to $A''$ is finite, this means there exists a dense set of points $W\subset V$ such that $\pi^{-1}(b)\cong \pi^{-1}(b')$ for any $b,b'\in W$. Since $\pi^{-1}(b)$ is simple, up to translation, we can further assume $(\pi^{-1}(b), H|_{\pi^{-1}(b)})\cong (\pi^{-1}(b'), H|_{\pi^{-1}(b')})$ as polarised abelian varieties. Thus $W$ is contracted by $p$, which implies that $V$ is also contracted by $p$ as $W$ is dense in $V$. In other words, $\pi^{-1}(V)\to V$ is isotrivial and we get a contradiction. This completes the proof.
\end{proof}

Finally, we come to the proof of our main theorem, which is a combination of the results above.

\begin{theorem}\label{thm-irr-sym}
Let $(X_0,\sigma_0)$ be a holomorphic symplectic manifold with a dominant morphism $$\pi_0\colon X_0\to B$$ to a normal projective variety $B$. Assume that $\sigma_0$ extends to a holomorphic $2$-form on a smooth compactification of $\pi_0$. If

    \begin{enumerate}[(1)]
       \item $\mathrm{codim}_{B}(B\setminus U)\geq 2$ for $U:=\pi_0(X_0)$, and
    
        \item very general fibers of $\pi_0$ are simple abelian varieties and Lagrangian and $\pi_0$ is not isotrivial,
    \end{enumerate}
then there exists a $\QQ$-factorial, terminal, irreducible symplectic variety $(X,\sigma)$ with a Lagrangian fibration $\pi\colon X\rightarrow B$ extending $\pi_0$ and $\sigma|_{X_0}=\sigma_0$.
\end{theorem}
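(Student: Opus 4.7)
The plan is to combine Theorem~\ref{thm-sym} with Proposition~\ref{prop-irr-lag}: use the minimal model program input of Theorem~\ref{thm-sym} to construct a projective, $\mathbb{Q}$-factorial, terminal symplectic compactification, and then promote it to an irreducible symplectic variety via Proposition~\ref{prop-irr-lag}. The hypotheses of Theorem~\ref{thm-sym} are all implied by ours: simple abelian varieties are in particular connected and projective, so assumption~(2) subsumes the ``connected and projective'' condition, while assumption~(1) and the extension of $\sigma_0$ to a smooth compactification are explicitly inherited. Applying Theorem~\ref{thm-sym} therefore produces a projective, $\mathbb{Q}$-factorial, terminal symplectic variety $(X,\sigma)$ with an algebraic fiber space $\pi\colon X \to B$ extending $\pi_0$ and with $\sigma|_{X_0} = \sigma_0$.

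Next, I would verify that $\pi$ is a Lagrangian fibration in the sense of Definition~\ref{def-lag-fib}. For a very general $u \in U$, the fiber $F_0 := \pi_0^{-1}(u)$ is an irreducible projective variety, hence a proper locally closed subscheme of the separated scheme $X$, and so it is closed in $X$; on the other hand $F_0 = X_0 \cap \pi^{-1}(u)$ is open in $\pi^{-1}(u)$. Since $\pi$ has connected fibers and is equidimensional by Proposition~\ref{prop-equidim}, the open--closed subset $F_0 \subset \pi^{-1}(u)$ must equal $\pi^{-1}(u)$ set-theoretically. Because $\sigma$ restricts to $\sigma_0$ on $X_0$ and $F_0$ is Lagrangian in $(X_0,\sigma_0)$ by assumption~(2), the fiber $\pi^{-1}(u)$ is Lagrangian in $(X,\sigma)$, confirming that $\pi$ is a Lagrangian fibration.

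Finally, I would invoke Proposition~\ref{prop-irr-lag} applied to $\pi$. Since $B \setminus U$ is constructible of codimension $\geq 2$ in $B$, its Zariski closure is a proper closed subvariety of $B$, and adding it to the countable family of exceptional loci in~(2) shows that very general fibers of $\pi$ over $B$ are identified with very general fibers of $\pi_0$ over $U$. The latter are simple abelian varieties that are Lagrangian, i.e.\ simple Lagrangian tori, so Proposition~\ref{prop-irr-lag} yields that $(X,\sigma)$ is an irreducible symplectic variety, completing the proof. I do not anticipate any substantive obstacle, since Theorem~\ref{thm-irr-sym} is essentially a synthesis of Theorem~\ref{thm-sym} and Proposition~\ref{prop-irr-lag}; the mildest technical point is the open--closed identification of very general fibers addressed in the second paragraph.
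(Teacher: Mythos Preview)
Your overall strategy matches the paper's: apply Theorem~\ref{thm-sym} to produce a $\QQ$-factorial terminal symplectic compactification, then invoke Proposition~\ref{prop-irr-lag} to upgrade it to irreducible symplectic. The paper's argument is terser and orders the last two steps differently: it first applies Proposition~\ref{prop-irr-lag} (implicitly using the identification of very general fibers you spell out), and only then concludes that $\pi$ is a Lagrangian fibration by citing \cite[Theorem~3]{schwald:fibration}, which applies once $(X,\sigma)$ is known to be irreducible symplectic and $\dim B = \tfrac{1}{2}\dim X$.

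There is one genuine slip in your middle paragraph: you invoke Proposition~\ref{prop-equidim} to say $\pi$ is equidimensional, but that proposition is stated for Lagrangian fibrations, which is exactly what you are trying to establish --- so the citation is circular. Fortunately the open--closed argument does not need equidimensionality at all: $\pi$ is an algebraic fiber space and hence has connected fibers, so once $F_0 = \pi_0^{-1}(u)$ is both open (as $X_0\cap\pi^{-1}(u)$) and closed (as a proper subvariety of the separated scheme $X$) in the connected scheme $\pi^{-1}(u)$, it coincides with $\pi^{-1}(u)$. Simply drop the reference to Proposition~\ref{prop-equidim} and the argument goes through. With that fix, your direct verification that $\pi$ is a Lagrangian fibration is a perfectly valid alternative to the paper's appeal to \cite{schwald:fibration}; it has the small advantage of not requiring irreducibility first, at the cost of writing out the fiber identification explicitly (which one arguably needs anyway to apply Proposition~\ref{prop-irr-lag}).
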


\begin{proof}
From Theorem \ref{thm-sym}, we get a $\QQ$-factorial, terminal, projective symplectic variety $(X,\sigma)$ with an algebraic fiber space $\pi\colon X\rightarrow B$ extending $\pi_0$ and $\sigma|_{X_0}=\sigma_0$. Moreover, $(X,\sigma)$ is an irreducible symplectic variety by assumption (2) and Proposition \ref{prop-irr-lag}. Finally, as $\pi$ is an algebraic space and $\dim(B)=\frac{1}{2}\dim(X)$, we can conclude that $\pi$ is a Lagrangian fibration by applying \cite[Theorem 3]{schwald:fibration}.
\end{proof}

The following statement shows the uniqueness of the base of irreducible symplectic compactifications (or even primitive symplectic compactifications).

\begin{proposition}\label{prop-uniqueness}
Let $(X_0,\sigma_0)$ be a holomorphic symplectic manifold with a Lagrangian fibration $\pi_0\colon X_0\to U$ to a normal quasi-projective variety $U$. Assume that $\pi_0$ admits two $\mathbb{Q}$-factorial compactifcations from primitive symplectic varieties $\pi\colon (X,\sigma)\to B$ and $\pi'\colon (X',\sigma')\to B'$, where $B$ and $B'$ are both normal projective varieties that contain $U$ as an open subscheme.
\[
\begin{tikzcd}
 X \arrow[dashrightarrow]{r}{} \arrow{d}[swap]{\pi} &  {X'}\arrow{d}{\pi'}\\
   B\arrow[dashrightarrow]{r}{\rho} &B'
\end{tikzcd}
\]
Then $\rho$ is an isomorphism $B\cong B'$.
\end{proposition}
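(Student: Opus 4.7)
The idea is to construct a common birational model $W$ of $B$ and $B'$ as the normalized closure of the graph of $\rho$, and to rule out exceptional divisors of the two projections using the equidimensionality of Lagrangian fibrations (Proposition~\ref{prop-equidim}) together with Zariski's connectedness theorem.

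Set $W$ to be the normalization of the closure of the graph of $\rho$ inside $B\times B'$, with proper birational projections $q\colon W\to B$ and $q'\colon W\to B'$ that both restrict to the identity on $U$. Since $\rho=q'\circ q^{-1}$, it suffices by symmetry to show $q$ is an isomorphism; by the purity of the exceptional locus for a proper birational morphism of normal varieties, this amounts to ruling out any prime $q$-exceptional divisor. Suppose for contradiction $E\subset W$ is such a divisor, so $q(E)\subset B$ has codimension $\geq 2$; then $E\subset W\setminus U$ (otherwise $q(E)$ would be codimension one in $B$, since $q|_U$ is an isomorphism), hence $q'(E)\subset B'\setminus U$ as well.

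Take a smooth common resolution $\widetilde X$ of the birational map $X\dashrightarrow X'$ (via resolving the closure of the graph in $X\times X'$), with proper birational morphisms $p\colon\widetilde X\to X$ and $p'\colon\widetilde X\to X'$ that are isomorphisms over $X_0$, together with a compatible morphism $\widetilde X\to W$. Let $\widetilde E\subset\widetilde X$ be the prime divisor given by the preimage of $E$. By Proposition~\ref{prop-equidim}, $\pi$ is equidimensional of relative dimension $\dim B$, hence $\pi^{-1}(q(E))$ has codimension $\geq 2$ in $X$, and since $p(\widetilde E)\subset \pi^{-1}(q(E))$, the divisor $\widetilde E$ is $p$-exceptional. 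In the key case that $q'(E)$ is a prime divisor $D'\subset B'\setminus U$, using the primitive symplectic $\mathbb{Q}$-factorial hypothesis (which ensures that the birational map $X\dashrightarrow X'$ is an isomorphism in codimension one) a component of $\pi'^{-1}(D')\subset X'\setminus X_0$ corresponds under $\phi^{-1}$ to a prime divisor of $X\setminus X_0$, whose $\pi$-image is, by equidimensionality of $\pi$, a prime divisor $D\subset B\setminus U$. The corresponding divisor $\bar D_W\subset W$ obtained from $D$ is then a prime divisor distinct from $E$ yet satisfying $q'(\bar D_W)=D'$.

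The contradiction now follows from Zariski's connectedness theorem: since the exceptional locus of the proper birational morphism $q'\colon W\to B'$ between normal varieties has pure codimension one in $W$ with image of codimension $\geq 2$ in $B'$, the generic point $\eta$ of $D'$ lies in the non-exceptional locus, so the fiber $q'^{-1}(\eta)$ is zero-dimensional; being connected, it consists of a single point. But it contains the two distinct generic points $\xi_E$ and $\xi_{\bar D_W}$, which is absurd. The remaining case, where $q'(E)$ has codimension $\geq 2$ in $B'$, is handled by the symmetric application of Proposition~\ref{prop-equidim} for $\pi'$. The main obstacle is verifying the smallness of the birational map $X\dashrightarrow X'$ and producing the divisor $\bar D_W$ appropriately; for both the primitive symplectic $\mathbb{Q}$-factorial hypothesis is essential.
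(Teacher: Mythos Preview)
Your approach has two genuine gaps that the paper's proof avoids by invoking a structural fact you did not use.

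\textbf{First gap.} You claim that ``by the purity of the exceptional locus for a proper birational morphism of normal varieties'' it suffices to rule out $q$-exceptional divisors on $W$. But purity of the exceptional locus requires the \emph{target} $B$ to be $\mathbb{Q}$-factorial (or the source to be smooth), not merely that both are normal; a small resolution of a non-$\mathbb{Q}$-factorial point is the standard counterexample. Here $W$ is only the normalization of a graph closure, and the statement of the proposition only assumes $B$ normal.

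\textbf{Second gap.} In the ``remaining case'' (where $q'(E)$ also has codimension $\geq 2$) you assert this is handled by the ``symmetric application'' of Proposition~\ref{prop-equidim}. But the symmetric argument only shows that the lift $\widetilde{E}\subset\widetilde{X}$ is $p'$-exceptional as well as $p$-exceptional, and this is no contradiction: a common resolution of two birational varieties routinely has divisors exceptional for both projections. When $\dim B\geq 3$ the dimension count on $(q,q')(E)\subset q(E)\times q'(E)$ does not exclude this case either, and you give no further argument.

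The paper closes both gaps at once by invoking \cite[Theorem 3]{schwald:fibration}: the base of a Lagrangian fibration on a $\mathbb{Q}$-factorial primitive symplectic variety is itself $\mathbb{Q}$-factorial of Picard number one. The $\mathbb{Q}$-factoriality gives purity; more importantly, Picard number one forces a non-isomorphic birational map $\rho$ (or $\rho^{-1}$) between such bases to contract a prime divisor $D\subset B$. One then lifts $D$ to a divisor $E\subset X$ dominating it, transfers $E$ to a divisor $E'\subset X'$ via the isomorphism-in-codimension-one $X\dashrightarrow X'$, and observes that $\pi'(E')\subset\overline{\rho(D)}$ has codimension $\geq 2$ in $B'$, contradicting the equidimensionality of $\pi'$. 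This is essentially the same endgame as your ``key case'', but the Picard-number-one input from Schwald's theorem is precisely what your argument is missing.
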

\begin{proof}
By \cite[Theorem 3]{schwald:fibration}, we know that $B$ and $B'$ are $\mathbb Q$-factorial varieties with Picard number one (note that irreducible symplectic varieties in \cite{schwald:fibration} mean primitive symplectic varieties in our article). So if $\rho$ is not an isomorphism, then it contracts a prime divisor $D$ on $B$. Let $E$ be a divisor on $X$  which dominates $D$. The image of $E$ on $X'$ is still a divisor $E'$ as $X$ and $X'$ are isomorphic in codimension one. Then the image of $E'$ on $B'$ is not a divisor, which contradicts the fact that $\pi'$ is equidimensional (see Proposition \ref{prop-equidim}).
\end{proof}

\subsection{Variant}

We end this section with a variant of Theorem \ref{thm-irr-sym}, which can probably be applied to other examples.



\begin{corollary}\label{cor-variant}
Let $(X_0,\sigma_0)$ be a holomorphic symplectic manifold with a dominant morphism $$\pi_0\colon X_0\to B$$ to a normal projective variety $B$. Assume that $\sigma_0$ extends to a holomorphic $2$-form on a smooth compactification of $\pi_0$. If

    \begin{enumerate}[(1)]
        \item $\mathrm{codim}_{B}(B\setminus U)\geq 2$ for $U:=\pi_0(X_0)$,

        \item $X_0$ is simply connected,

        \item $\rH^0(X_0, \Omega_{X_0}^{2})=\CC$, and
        
        \item very general fibers of $\pi_0$ are projective, 
    \end{enumerate}
 then there exists a $\QQ$-factorial, terminal, irreducible symplectic variety $(X,\sigma)$ with an algebraic fiber space $\pi\colon X\rightarrow B$ extending $\pi_0$ and $\sigma|_{X_0}=\sigma_0$.
\end{corollary}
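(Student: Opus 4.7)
The plan is to apply Theorem \ref{thm-sym} to produce a $\QQ$-factorial terminal projective symplectic compactification $(X,\sigma)$ with $\pi\colon X\to B$ extending $\pi_0$, and then to upgrade $X$ to an irreducible symplectic variety by combining assumptions (2) and (3) with the decomposition theorem. The connected fibers hypothesis of Theorem \ref{thm-sym} is ensured by a Stein factorization argument: if $X_0\to U'\to U$ is the Stein factorization, the pull-back of the generically \'etale cover $U'\to U$ to the smooth locus of $\pi_0$ yields a finite \'etale cover of a big open subscheme of $X_0$, which extends by purity of the branch locus to an \'etale cover of $X_0$; simple connectedness of $X_0$ forces this cover to be trivial, whence $U'\to U$ is an isomorphism and Theorem \ref{thm-sym} applies.

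The main new step is to show that every finite quasi-\'etale morphism $f\colon Y\to X$ is an isomorphism. Let $Z\subset X$ be a codimension-$\geq 2$ closed subset outside which $f$ is \'etale. Since $X_0\subset X$ is an open immersion landing in $X_{\reg}$, the restriction $f^{-1}(X_0\setminus Z)\to X_0\setminus Z$ is \'etale, and purity of the branch locus on the smooth variety $X_0$ upgrades this to an \'etale cover $f^{-1}(X_0)\to X_0$. By assumption (2), this cover is trivial, yielding a section $s\colon X_0\to Y$. Because $Y$ is normal and $f$ is finite, the normalization of the closure $\overline{s(X_0)}\subset Y$ is finite and birational over $X$, hence isomorphic to $X$ by Zariski's main theorem; this produces a section $\wt{s}\colon X\to Y$. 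Irreducibility of $Y$ combined with $\dim\wt{s}(X)=\dim Y$ forces $\wt{s}(X)=Y$, so $f$ is an isomorphism.

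To conclude, apply Theorem \ref{thm-decomposition} to $X$ to obtain a finite quasi-\'etale cover with a decomposition
\[
\wt{X}\cong A\times\prod_{i\in I}Y_i\times\prod_{j\in J}Z_j.
\]
The previous step forces $\wt{X}=X$, so the decomposition holds on $X$ itself. If $\dim A>0$, then multiplication by $n\geq 2$ on $A$ yields a non-trivial finite \'etale cover of $X$, contradicting the preceding paragraph; hence $A=\Spec\CC$. Assumption (3), together with the open immersion $X_0\hookrightarrow X_{\reg}$ and reflexivity of $\Omega_X^{[2]}$, implies $\rH^0(X,\Omega_X^{[2]})\hookrightarrow \rH^0(X_0,\Omega_{X_0}^{2})=\CC$, so $\rH^0(X,\Omega_X^{[2]})=\CC\cdot\sigma$. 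Since $\rH^0(\Omega^{[1]})$ vanishes for every strict Calabi--Yau variety and every irreducible symplectic variety, in the product decomposition the only contributions to $\rH^0(X,\Omega_X^{[2]})$ come from individual factor summands of $\rH^0(\Omega^{[2]})$, each of which is either $0$ or $\CC$. Thus there is exactly one factor, which must be an irreducible symplectic variety, and we conclude that $(X,\sigma)$ itself is irreducible symplectic.

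The main obstacle is the second paragraph: leveraging simple connectedness of $X_0$ via purity on the smooth open to rigidify all finite quasi-\'etale covers of $X$ is the essential new input; the rest is a structural consequence of Theorem \ref{thm-decomposition} combined with the $\rH^0$-computation afforded by assumption (3).
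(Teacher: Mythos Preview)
Your approach is essentially the same as the paper's: invoke Theorem~\ref{thm-sym}, use assumption (2) to show that $X$ has no non-trivial finite quasi-\'etale covers (the paper asserts this in one line; you spell out the purity argument through the smooth open $X_0$), apply Theorem~\ref{thm-decomposition} directly to $X$, kill the abelian factor, and finish with (3).

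There is one real gap in your final paragraph. From $\rH^0(X,\Omega_X^{[2]})=\CC$ you can only conclude that exactly one factor contributes a nonzero $\rH^0(\Omega^{[2]})$; strict Calabi--Yau factors of dimension $\geq 3$ contribute $0$ to this space and are therefore not excluded by your count. The paper handles this step separately, \emph{before} invoking (3): since the symplectic form $\sigma$ on $X_{\reg}=\prod_i(Y_i)_{\reg}\times\prod_j(Z_j)_{\reg}$ decomposes as a sum of pulled-back $2$-forms from the factors (no mixed terms, as all factors have vanishing $\rH^0(\Omega^{[1]})$), non-degeneracy of $\sigma$ forces each factor to carry a non-degenerate reflexive $2$-form, whence $Y_i=\Spec\CC$ for every $i$. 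Only then does $\rH^0(X,\Omega_X^{[2]})=\CC$ give $m=1$.

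A smaller point: your Stein-factorization argument for connectedness of very general fibers is not quite right, since $\pi_0$ is not proper and the preimage of the open locus over which it becomes projective need not be big in $X_0$, so purity does not apply. The paper, for its part, simply cites Theorem~\ref{thm-sym} without addressing this hypothesis.
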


\begin{proof}
By assumption (1), (4), and Theorem \ref{thm-sym}, we have an algebraic fiber space $\pi\colon X\rightarrow B$ extending $\pi_0$ and satisfying all statements except being irreducible symplectic.

Using Theorem \ref{thm-decomposition}, and the fact that $X$ does not have any quasi-\'etale cover by the assumption (2), we have a decomposition
\[X=A\times \prod_{i\in I} Y_i \times \prod_{j\in J} Z_j\]
such that $A$ is an abelian variety, $Y_i$ is a strict Calabi--Yau variety, and $Z_j$ is an irreducible symplectic variety. By 
(2), $X$ is also simply connected, hence we get $A=\Spec(\CC)$. From the existence of holomorphic symplectic form on $X$, we obtain $Y_i=\Spec(\CC)$ for any $i\in I$. Therefore, we can write $X=\prod_{j=1}^m Z_j$ where $Z_j$ is an irreducible symplectic variety of dimension $\dim(Z_j)>0$ for each $j$. Then (3) implies $m=1$ and the result follows.
\end{proof}

\section{Irreducible symplectic varieties with $b_2\geq 24$}\label{sec-jacobian}

In this section, we apply Theorem \ref{thm-irr-sym} to the relative Jacobian fibration associated with smooth cubic fivefolds containing a fixed generic cubic fourfold and prove Theorem \ref{thm-cubic}. The Lagrangian fibration structure in this setting is first observed in \cite{IM08cubic} following \cite{markman:spectral-curve} (see also \cite{mark:jac-5fold}). 

To verify assumptions in Theorem \ref{thm-irr-sym}, we use the geometric invariant theory (GIT) to construct a projective moduli space parametrizing cubic fivefolds containing a generic cubic fourfold, which gives a natural compactification of the base of the Lagrangian fibration. The moduli space is explicitly computed out, using the variation of GIT (see Section \ref{ss-VGIT} and \ref{subsec-moduli-space}). Once the compactification of the base is constructed, we show the Lagrangian fibration structure can be extended over all codimension one points, following the argument in \cite[Section 1]{lsv}.

\subsection{Variation of GIT}\label{ss-VGIT}

First, we prove some results on VGIT for moduli spaces of pairs. Let $$\cP:=\bP(\rH^0(\bP^6, \cO_{\bP^6}(3)))\times \bP(\rH^0(\bP^6, \cO_{\bP^6}(1)))\cong \bP^{83}\times \bP^6$$ be the parameter space of $(Y, H)$, where $Y\subset \bP^6$ is a cubic fivefold and $H\subset\bP^6$ a hyperplane. Then $\cP$ has a natural $\SL_7$-action induced by the standard $\SL_7$-action on $\bP^6$.  For $a,b\in \bZ$, denote by $\cO_{\cP}(a,b):=\cO_{\bP^{83}}(a) \boxtimes \cO_{\bP^{6}}(b)$. We know that $\cO_{\cP}(a,b)$ admits a unique $\SL_7$-linearization. Moreover, by \cite[Lemma 2.1]{GMG18}, we have $$\Pic^{\SL_7}(\cP)= \{\cO_{\cP}(a,b)\mid a,b\in \bZ\}\cong \bZ^2.$$ Thus for $a,b\in \bZ_{>0}$, we may consider the GIT quotient of $\cP$ by $\SL_7$ (or equivalently $\PGL_7$) with respect to the ample $\SL_7$-linearization $\cO_{\cP}(a,b)$. 
From the standard theory of GIT, the GIT quotient only depends on the ratio $t:=\frac{b}{a}$, which is called the \emph{VGIT slope}. A point $(Y,H)\in \cP$ is called $t$-GIT (semi/poly)stable if it is GIT (semi/poly)stable with respect to a linearization $\cO_{\cP}(a,b)$ with $t=\frac{b}{a}$.

\begin{definition}
    For $t\in \bQ_{>0}$, we define the $t$-GIT quotient stack $\cM(t)$ and the $t$-GIT quotient space $M(t)$ of cubic fivefolds with hyperplanes to be 
    \[
    \cM(t):= [\cP^{\rm ss}(t)/ \PGL_7], \qquad M(t):=\cP^{\rm ss}(t)\sslash \SL_7.
    \]
    Here $\cP^{\rm ss}(t)$ is the $t$-GIT semistable locus of $\cP$. 
\end{definition}

From the standard theory of variation of GIT (see e.g.~\cite{Tha96, DH98}), there is a finite sequence of rational numbers $0=t_0<t_1<\cdots< t_k $, known as VGIT walls, such that for each $i$ we have that $\cM(t)$ and $M(t)$ are independent of the choice of $t\in (t_i, t_{i+1})$ (we set $t_{k+1}=+\infty$ as our convention). Moreover, we have a wall-crossing diagram
\[
\begin{tikzcd}
    \cM(t_i \pm\epsilon)  \arrow[hookrightarrow]{r}{} \arrow{d}{} & \cM(t_i) \arrow{d}{}\\
    M(t_i\pm \epsilon) \arrow{r}{} & M(t_i)
\end{tikzcd}
\]
where the top arrow is an open immersion, the bottom arrow is the induced projective morphism,  the vertical arrows are good moduli space morphisms, and $0<\epsilon\ll 1$.

The following result from \cite{GMG18} characterizes the maximal VGIT wall and its GIT semistable locus. See also \cite[Theorem 2.4]{Laz09} for a related result in $\bP^2$.

\begin{lemma}[{\cite[Lemma 4.1]{GMG18}}]\label{lem:max-wall}
Let $t\in \bQ_{>0}$. Then $\cP^{\rm ss}(t)$ is not empty if and only if $t\leq \frac{1}{2}$. In particular, we have $t_k = \frac{1}{2}$. Moreover, a point $(Y,H)\in \cP$ belongs to $\cP^{\rm ss}(\frac{1}{2})$ if and only if $Y\cap H$  is a GIT semistable cubic fourfold in $H\cong \bP^5$. 
\end{lemma}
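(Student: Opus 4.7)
The plan is to apply the Hilbert--Mumford numerical criterion. After using $\SL_7$ to place $H = V(x_0)$, a non-trivial 1-PS $\lambda$ of $\SL_7$ with integer weight vector $w = (w_0, \ldots, w_6)$ summing to zero gives HM weights $\mu(H, \lambda) = w_0$ and $\mu(Y, \lambda) = \max\{I \cdot w : c_I \neq 0\}$, where $F = \sum c_I x^I$ defines $Y$. Hence $(Y, H)$ is $t$-semistable if and only if $\max\{I \cdot w : c_I \neq 0\} + t \, w_0 \geq 0$ for every such $\lambda$.

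For the assertion $\cP^{\rm ss}(t) = \emptyset$ when $t > 1/2$, I would test every pair with the single 1-PS $\lambda_0 = (-6, 1, 1, 1, 1, 1, 1)$ after normalizing $H = V(x_0)$. The weight of a monomial $x^I$ under $\lambda_0$ is $3 - 7 I_0$, attaining its maximum $3$ precisely when $F|_H \neq 0$ and otherwise at most $-4$. The HM inequality reduces to $3 - 6t \geq 0$ or $-4 - 6t \geq 0$, forcing $t \leq 1/2$ or being unsatisfiable for $t > 0$; in either case $t_k \leq 1/2$.

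For the equivalence at $t = 1/2$, I would argue in two directions. Given a 1-PS $\lambda$ of $\SL_7$, form the induced 1-PS $\lambda'$ of $\SL_6$ on $x_1, \ldots, x_6$ via $w_i' := 6 w_i + w_0$, which satisfies $\sum w_i' = 0$. When $Y \cap H$ is GIT semistable in $H \cong \bP^5$, the HM inequality for $\lambda'$ yields a monomial $x^{I'}$ in $F|_H$ with $I' \cdot w' \geq 0$; the identity $I \cdot w = (I' \cdot w' - 3 w_0)/6$ then gives $I \cdot w \geq -w_0/2$, hence $\max\{I \cdot w\} + (1/2) w_0 \geq 0$. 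Conversely, given a 1-PS $\lambda'$ of $\SL_6$ destabilizing $Y \cap H$, I extend it to a 1-PS $\lambda$ of $\SL_7$ with a sufficiently large negative weight $w_0 = -k$ on $x_0$ (after rescaling to ensure integer weights). For $k$ large, every monomial of $F$ with a factor of $x_0$ acquires weight dominated by $-7 k I_0 / 6$, well below $k/2 = -w_0/2$, so $\max\{I \cdot w\}$ is achieved on an $F|_H$-monomial and lies strictly below $-w_0/2$, violating the HM inequality at $t = 1/2$.

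Finally, non-emptyness of $\cP^{\rm ss}(t)$ for all $t \leq 1/2$ follows by exhibiting a stable pair: take $Z \subset H$ to be any smooth cubic fourfold (strictly GIT stable by classical results) and $Y$ to be any smooth cubic fivefold containing $Z$. A refined version of the equivalence above shows that $(Y, H)$ is strictly $1/2$-stable, and a case analysis by the sign of $w_0$ confirms that the strict HM inequality persists for all $t \in (0, 1/2]$. The chief technical hurdle will be the bookkeeping in the ``$\Rightarrow$'' direction of the equivalence, and in particular verifying that the lower bound on $k$ is effective enough that $\max\{I \cdot w\}$ is genuinely achieved on monomials from $F|_H$ rather than on mixed ones involving $x_0$.
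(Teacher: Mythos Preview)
The paper does not give its own proof; the lemma is quoted from \cite{GMG18}. Your Hilbert--Mumford computation is the natural route and is essentially sound, but one assertion is false and should be corrected. You claim that when $Z=Y\cap H$ is a smooth (hence strictly GIT stable) cubic fourfold and $Y$ is a smooth cubic fivefold through $Z$, the pair $(Y,H)$ is \emph{strictly} $\tfrac{1}{2}$-stable. It is not: for the $1$-PS $\sigma$ of weight $(-6,1,\dots,1)$ one has $\mu^{\cO(1,1/2)}((Y,H),\sigma)=3+\tfrac{1}{2}(-6)=0$, so the HM inequality is never strict along this ray; in your own notation this is exactly the case where the induced $\lambda'$ on $\SL_6$ is trivial, so strict stability of $Z$ gives nothing. (Indeed, the paper's Lemma~\ref{lem:last-wall-ps} shows a $\tfrac{1}{2}$-polystable pair must have $Y$ a cone over $Z$, so no smooth $Y$ can be $\tfrac{1}{2}$-stable.) Fortunately this does not damage the lemma: you only need $\tfrac{1}{2}$-\emph{semistability}, which your equivalence already gives, and then linearity
\[
\mu^{\cO(1,t)}((Y,H),\lambda)=(1-2t)\,\mu(Y,\lambda)+2t\,\mu^{\cO(1,1/2)}((Y,H),\lambda)
\]
together with GIT semistability of the smooth cubic fivefold $Y$ yields $t$-semistability for every $t\in(0,\tfrac{1}{2}]$.

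One more point worth making explicit: in the ``$\Leftarrow$'' direction of the equivalence you silently assume $\lambda$ diagonal \emph{and} $H=V(x_0)$ simultaneously. After diagonalising $\lambda$, the hyperplane $H$ is in general $V(\sum a_ix_i)$; one then uses an element of the parabolic $P(\lambda)$, which leaves HM weights unchanged, to move $H$ to a coordinate hyperplane before relabelling. This is standard but should be stated.
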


Next, we study maps from a moduli space of cubic fivefolds with hyperplanes to the GIT moduli of cubic fourfolds. Before doing this, we set some notations. Let $$\bfP_4:= \bP(\rH^0(\bP^5, \cO_{\bP^5}(3)))\cong \bP^{55}$$ be the parameter space of cubic fourfolds in $\bP^5$. Then $\bfP_4$ has a natural $\SL_6$-action which gives a unique linearization on $\cO_{\bfP_4}(1)$. We consider the GIT quotient of $\bfP_4$ by $\SL_6$ (or equivalently $\PGL_6$) with respect to this linearization.

\begin{definition}
    Define the GIT quotient stack $\cM_4$ and the GIT quotient space $\obfM_4$ of cubic fourfolds to be
\[
\cM_4:=[\bfP_4^{\rm ss}/\PGL_6], \qquad \obfM_4 := \bfP_4^{\rm ss}\sslash\SL_6.
\]
Here $\bfP_4^{\rm ss}\subset \bfP_4$ denotes the GIT semistable locus. 
\end{definition}

The GIT stability of a cubic fivefold with a hyperplane and the resulting cubic fourfold are related as follows.

\begin{lemma}\label{lem:last-wall-ps}
A point $(Y,H)\in \cP$ is $\frac{1}{2}$-GIT polystable if and only if $X:=Y\cap H$ is a GIT polystable cubic fourfold in $H\cong \bP^5$ and $Y$ is  a projective cone over $X$.
\end{lemma}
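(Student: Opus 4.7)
The plan is to apply the Hilbert--Mumford numerical criterion via a single one-parameter subgroup of $\SL_7$ that realizes the degeneration of an arbitrary cubic fivefold to the projective cone over its hyperplane section. Fix coordinates on $\bP^6$ with $H_0:=\{x_6=0\}$ and $p_0:=[0{:}\cdots{:}0{:}1]$, and consider
\[
\lambda_0(s):=\mathrm{diag}(s,s,s,s,s,s,s^{-6})\in \SL_7,
\]
which stabilizes $H_0$ and rescales the apex direction. Writing a cubic fivefold $Y$ as $\{f(x_0,\ldots,x_5)+x_6 g_1+x_6^2 g_2+cx_6^3=0\}$, so that $X=Y\cap H_0=\{f=0\}$, the $\lambda_0$-weights of the four summands are $-3,4,11,18$, while $H_0$ is $\lambda_0$-fixed of weight $6$. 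A direct computation yields $\mu_{1/2}(Y,H_0;\lambda_0)=3-3=0$ and $\lim_{s\to 0}\lambda_0(s)\cdot(Y,H_0)=(C_X,H_0)$, where $C_X$ denotes the cone over $X$ with apex $p_0$.

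For the ``only if'' direction, I would start from a $\tfrac12$-GIT polystable pair $(Y,H)$ and, after an $\SL_7$-action, assume $H=H_0$. Lemma~\ref{lem:max-wall} gives that $X$ is a semistable cubic fourfold, and the Kempf--Ness criterion applied to $\lambda_0$ forces $(C_X,H_0)\in \SL_7\cdot(Y,H_0)$; thus some $h$ in the parabolic $P:=\Stab_{\SL_7}(H_0)$ satisfies $h\cdot Y=C_X$. Since $h^{-1}\in P$ sends cones to cones and moves $p_0$ to a point $h^{-1}\cdot p_0\notin H_0$, the fivefold $Y$ is itself a projective cone over $X$. Replacing $(Y,H_0)$ by the orbit representative $(C_X,H_0)$, I would verify polystability of $X$: for any $\SL_6$ 1-PS $\tau$ with $\mu(X,\tau)=0$, the lift $\tilde\tau:=\mathrm{diag}(\tau,1)\in \SL_7$ fixes $x_6$, so $\mu_{1/2}(C_X,H_0;\tilde\tau)=\mu(X,\tau)+\tfrac12\cdot 0=0$, with limit $(C_{X'},H_0)$. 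Any $\SL_7$-element identifying this limit with $(C_X,H_0)$ must stabilize both $H_0$ and $p_0$ (otherwise the apex would shift), hence lies in the Levi subgroup $\cong GL_6$ and realizes $X\sim_{\SL_6}X'$.

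For the ``if'' direction I would argue via orbit closures rather than a second Hilbert--Mumford computation. Assume $X$ is polystable and $Y=C_X$; semistability of $(C_X,H_0)$ is Lemma~\ref{lem:max-wall}. If $\SL_7\cdot(C_X,H_0)$ were not closed in $\cP^{\rm ss}(\tfrac12)$, its closure would contain a distinct polystable orbit $(Y_0,H_0')$; by the ``only if'' direction just proved, $Y_0$ is a cone over a polystable cubic fourfold $X_0:=Y_0\cap H_0'$. The $\SL_7$-equivariant assignment $(Y,H)\mapsto Y\cap H$ descends to a morphism $M(\tfrac12)\to\obfM_4$, under which both closed orbits have the same image; hence $X_0\sim_{\SL_6} X$. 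A further $\SL_7$-action brings me to $H_0'=H_0$ and $X_0=X$, so $Y_0$ is a cone over $X$ with apex $p'\notin H_0$. But the unipotent radical $U\cong\bG_a^6\subset P$ acts simply transitively on $\bP^6\setminus H_0$ by translating apices, so $(Y_0,H_0)\in U\cdot(C_X,H_0)\subset \SL_7\cdot(C_X,H_0)$, contradicting the choice of $(Y_0,H_0')$.

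The main technical hurdle is the ``apex bookkeeping'' threading through the argument: I must verify, by an elementary matrix computation using the Taylor expansion of $f(x_\bullet-v\,x_6)$, that any element of $P$ sends a cone to a cone and permutes apices accordingly, and that $U\cong\bG_a^6$ acts simply transitively on $\bP^6\setminus H_0$. These facts are concrete but essential, and must be spelled out to close the argument at each stage.
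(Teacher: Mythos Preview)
Your proof is correct and takes essentially the same approach as the paper: the same degenerating one-parameter subgroup (the paper's $\sigma$ of weight $(-6,1,\dots,1)$ is your $\lambda_0$ in relabeled coordinates), the same use of polystability to force the cone limit into the orbit, and the same ``apply the forward direction to the polystable degeneration'' scheme for the converse. One minor simplification from the paper's version: to verify that $X$ is polystable in the ``only if'' direction, rather than testing every weight-zero $\tau$ and tracking vertices (which implicitly requires knowing that a semistable cubic fourfold is never itself a cone), the paper takes a single $1$-PS $\lambda$ of $\SL_6$ driving $X$ to its polystable representative $X'$, lifts it trivially to $\SL_7$, and reads off $X\cong X'$ directly by intersecting the isomorphic pairs $(C_X,H_0)\cong(C_{X'},H_0)$ with $H_0$ --- no apex bookkeeping is needed at that step.
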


\begin{proof}
We first show the ``only if'' part. Suppose $(Y,H)\in \cP$ is $\frac{1}{2}$-GIT polystable. After a change of projective coordinates, we may assume that $H = (x_0=0)$ and $X = (f_3(x_1, \cdots, x_6) = 0)\cap H$ for some homogeneous cubic polynomial $f_3$. Thus $$Y = (x_0 f_2(x_0, x_1, \cdots, x_6) + f_3 = 0) $$ for some homogeneous quadratic polynomial $f_2$. Let $\sigma$ be the $1$-PS of $\SL_7$ of weight $(-6,1, \cdots, 1)$. Denote by $$(Y_0, H_0):= \lim_{t\to 0}\sigma(t) \cdot (Y, H)\,.$$ Then it is clear that $Y_0 = (f_3 = 0)$ and $H_0 = H$, which implies $X_0:= Y_0 \cap H_0 = X$. By Lemma \ref{lem:max-wall}, we know that $X$ is GIT semistable which implies that $(Y_0, H_0)$ is $\frac{1}{2}$-GIT semistable as $X_0 = X$. Since $(Y_0, H_0)$ belongs to the orbit closure of $(Y,H)$, we conclude that $(Y_0, H_0)\cong (Y,H)$ by $\frac{1}{2}$-GIT polystability of $(Y,H)$. Thus $Y$ is a projective cone over $X$. Since $X$ is GIT semistable, there exists a $1$-PS $\lambda$ of $\SL_6$ such that $X':=\lim_{t\to 0} \lambda(t)\cdot X$ is a GIT polystable cubic fourfold. Let $\tilde{\lambda}$ be the trivial extension of $\lambda$ on $x_0$ as a $1$-PS of $\SL_7$. Thus we have $$(Y', H'):=\lim_{t\to 0} \tilde{\lambda}\cdot (Y_0, H_0) $$ satisfies that $H' = H_0 = H$ and $Y'$ is the projective cone over $X'$. Then by Lemma \ref{lem:max-wall}, we know that $(Y', H')$ is $\frac{1}{2}$-GIT semistable and belongs to the orbit closure of $(Y,H)$. Thus $(Y,H) \cong (Y', H')$ by $\frac{1}{2}$-GIT polystability of $(Y,H)$ which implies that $X\cong X'$ is GIT polystable. 

Next, we show the ``if'' part. Suppose $X$ is GIT polystable and $Y$ is a projective cone over $X$. By Lemma \ref{lem:max-wall} we know that $(Y, H)$ is $\frac{1}{2}$-GIT semistable. Let $(Y_0, H_0)$ be the $\frac{1}{2}$-GIT polystable degeneration of $(Y,H)$. Then $X_0:= Y_0\cap H_0$ is the GIT polystable degeneration of $X$ and $Y_0$ is a projective cone over $X_0$ by the ``only if'' part. Thus $X_0\cong X$ by GIT polystability of $X$, which implies that $(Y, H) \cong (Y_0, H_0)$ is $\frac{1}{2}$-GIT polystable.
\end{proof}

The above lemma can also be interpreted at the level of moduli stacks and spaces.

\begin{proposition}\label{prop:max-wall-moduli}
There is a smooth morphism of algebraic stacks $\varphi\colon \cM(\frac{1}{2})\to \cM_4$ induced by $(Y,H)\mapsto Y\cap H$. Moreover, $\varphi$ descends to an isomorphism $\phi\colon M(\frac{1}{2})\xrightarrow{\cong} \obfM_{4}$ between GIT quotient spaces.
\end{proposition}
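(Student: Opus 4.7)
The plan is to construct $\varphi$ via a slice argument, verify smoothness by factoring through an intermediate stack, and then produce an explicit inverse to $\phi$ at the level of GIT quotient spaces via the cone construction. To construct $\varphi$, I would fix a reference hyperplane $H_0 = (x_0 = 0)$ and let $P \subset \PGL_7$ be its stabilizer, a maximal parabolic subgroup admitting a natural surjection $P \twoheadrightarrow \PGL_6$ via its Levi quotient acting on $W := \ker(x_0) \cong \bC^6$. Since $\PGL_7$ acts transitively on hyperplanes, the second projection $\cP \to \bP(V^*)$ identifies $\cP \cong \PGL_7 \times^P \cP_0$ and hence $\cM(\tfrac{1}{2}) \cong [\cP_0^{\rm ss}(\tfrac{1}{2})/P]$, where $\cP_0 := \bP(\rH^0(\bP^6, \cO(3))) \times \{H_0\}$ and $\cP_0^{\rm ss}(\tfrac{1}{2}) := \cP_0 \cap \cP^{\rm ss}(\tfrac{1}{2})$. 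The restriction of cubic forms gives a $P$-equivariant linear surjection $\mathrm{Sym}^3 V^* \twoheadrightarrow \mathrm{Sym}^3 W^*$ whose projectivization is a rational map $\cP_0 \dashrightarrow \bfP_4$; by Lemma \ref{lem:max-wall} this is defined on $\cP_0^{\rm ss}(\tfrac{1}{2})$ and sends it into $\bfP_4^{\rm ss}$, producing $\varphi$ upon passing to quotient stacks.

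For the smoothness of $\varphi$, I would factor it as
$$\varphi\colon [\cP_0^{\rm ss}(\tfrac{1}{2})/P] \xrightarrow{\varphi_1} [\bfP_4^{\rm ss}/P] \xrightarrow{\varphi_2} [\bfP_4^{\rm ss}/\PGL_6] = \cM_4.$$
The morphism $\varphi_1$ is induced by the $P$-equivariant restriction map; via the short exact sequence $0 \to x_0 \cdot \mathrm{Sym}^2 V^* \to \mathrm{Sym}^3 V^* \to \mathrm{Sym}^3 W^* \to 0$, the regular locus of $\cP_0 \dashrightarrow \bfP_4$ is identified with a $\bP^{28}$-bundle over $\bfP_4$ (the projectivization of the rank-$29$ bundle obtained as the preimage of $\cO_{\bfP_4}(-1)$), so $\varphi_1$ is smooth. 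The morphism $\varphi_2$ is a gerbe banded by $K := \ker(P \twoheadrightarrow \PGL_6)$, a smooth connected linear algebraic group (an extension of $\bG_m$ by $\bG_a^6$), and hence is also smooth.

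For the isomorphism $\phi$, I would construct the inverse $\psi \colon \obfM_4 \to M(\tfrac{1}{2})$ via the cone construction. The splitting $V^* = \bC x_0 \oplus W^*$ yields a $\PGL_6$-equivariant closed embedding $\iota \colon \bfP_4 \hookrightarrow \cP_0$ sending $[f_3]$ to $([f_3], H_0)$, geometrically producing the pair $(\mathrm{cone}(X), H_0)$. By Lemma \ref{lem:last-wall-ps}, $\iota(\bfP_4^{\rm ss})$ lies in the $\tfrac{1}{2}$-GIT polystable locus, so composing with the good moduli space map yields a $\PGL_6$-invariant morphism descending to $\psi$. The equality $\phi \circ \psi = \mathrm{id}$ is immediate from $\mathrm{cone}(X) \cap H_0 = X$, while $\psi \circ \phi = \mathrm{id}$ follows from the observation that both $(Y,H)$ and $(\mathrm{cone}((Y \cap H)^{\mathrm{ps}}), H_0)$ share the same polystable representative by Lemma \ref{lem:last-wall-ps}. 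The main obstacle I anticipate is the careful bookkeeping of the stack-level identifications, particularly checking that $\iota$ genuinely descends to a morphism of good moduli spaces rather than only yielding a set-theoretic bijection of polystable orbits; the substantive geometric input is packaged by Lemma \ref{lem:max-wall} and Lemma \ref{lem:last-wall-ps}.
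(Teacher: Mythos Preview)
Your proof is correct and takes a genuinely different route from the paper on both parts of the proposition.

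For the isomorphism $\phi$, the paper argues indirectly: it shows $\phi$ is a bijection on closed points (using Lemma \ref{lem:last-wall-ps} to identify polystable orbits on both sides), observes that both GIT quotients are normal projective varieties, and concludes via Zariski's Main Theorem. Your approach instead produces an explicit inverse $\psi$ via the cone construction and checks the two compositions. Both methods rest on the same geometric input (Lemmas \ref{lem:max-wall} and \ref{lem:last-wall-ps}); the paper's is shorter, while yours is more constructive and avoids appealing to normality of the quotients.

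For the smoothness of $\varphi$, the paper works $\PGL_7$-equivariantly throughout: it forms the projective bundle $\bP(\cE)$ over $(\bP^6)^*$ parameterizing pairs $(H,X)$, observes that the forgetful map $\cP\setminus\{H\subset Y\}\to \bP\cE$ is smooth, and identifies $[\bP\cE^{\rm ss}/\PGL_7]\cong \cM_4$ as a $\PGL_7$-torsor. Your slice argument instead reduces to the parabolic $P=\mathrm{Stab}(H_0)$, then factors $\varphi$ as a smooth map followed by a $K$-gerbe. These are essentially dual descriptions of the same geometry: the paper keeps the full symmetry and pays the price of carrying the bundle $\cE$, while you fix a hyperplane and pay the price of tracking the parabolic and its unipotent radical.

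One small correction: the regular locus of the linear projection $\cP_0\dashrightarrow \bfP_4$ is an $\bA^{28}$-bundle (fibers $\bP^{28}\setminus\bP^{27}$), not a $\bP^{28}$-bundle; the $\bP^{28}$-bundle you describe is the blowup of $\bP^{83}$ along the center $\bP^{27}$. This does not affect the smoothness conclusion. Also, for the well-definedness of $\psi$ you only need that $\iota(\bfP_4^{\rm ss})$ lands in the semistable locus, which is Lemma \ref{lem:max-wall}; Lemma \ref{lem:last-wall-ps} enters only when verifying $\psi\circ\phi=\mathrm{id}$.
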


\begin{proof}
The morphism $\varphi$ is well-defined by Lemma \ref{lem:max-wall}. It descends to a morphism $\phi$ between good moduli spaces by \cite[Theorem 6.6]{alp}. From Lemma \ref{lem:last-wall-ps}, we see that $\phi$ is a bijection. Since both $M(\frac{1}{2})$ and $\obfM_4$ are normal projective varieties by \cite[Theorem 4.16(viii)]{alp}, we conclude that $\phi$ is an isomorphism by Zariski's Main Theorem.

It remains to show that $\varphi$ is a smooth morphism. Let $\cE$ be the vector bundle over $(\bP^6)^*$ whose fiber over $H\in (\bP^6)^*$ is $\rH^0(H, \cO_{H}(3))$. Then $\cE$ is a quotient of the trivial vector bundle $\rH^0(\bP^6, \cO_{\bP^6}(3))\otimes_{\CC} \cO_{(\bP^6)^*}$ by the vector subbundle whose fiber over $H$ is $\rH^0(\bP^6,\cI_{H/\bP^6}(3))$. Therefore, the projective bundle $\bP(\cE)=\Proj_{(\bP^6)^*} (\Sym \cE^*)$ associated to $\cE$ is the parameter space of $(H, X)$ where $X\subset H\subset \bP^6$ is a cubic fourfold contained in a hyperplane $H$. Thus the quotient map of vector bundles induces a smooth projection morphism $$\tilde{\varphi}\colon \cP \setminus \{(Y,H)\mid H\subset Y\}\to \bP\cE\,.$$ Let $\bP\cE^{\rm ss}\subset \bP\cE$ be the open locus parameterizing $(H,X)$ such that $X$ is a GIT semistable cubic fourfold. By Lemma \ref{lem:max-wall}, $$\cP^{\rm ss}(\tfrac{1}{2})\subset \cP \setminus \{(Y,H)\mid H\subset Y\}$$ 
and $\tilde{\varphi}(\cP^{\rm ss}(\frac{1}{2}))\subset \bP\cE^{\rm ss}$. Thus $\tilde{\varphi}$ descends to a smooth morphism $\cM(\frac{1}{2}) \to [\bP\cE^{\rm ss}/\PGL_7]$. From the construction, we know that the morphism $\bP\cE^{\rm ss}\to \cM_4$ induced by the forgetful map $(H, X) \mapsto [X]$ is a $\PGL_7$-torsor. Then we have an isomorphism of algebraic stacks $$ [\bP\cE^{\rm ss}/\PGL_7] \cong \cM_4\, ,$$ which implies that $\varphi$ is smooth.  
\end{proof}


Combining Proposition \ref{prop:max-wall-moduli} with the general VGIT theory discussed before, we obtain

\begin{proposition}\label{prop:morphism-VGIT}
There is a smooth morphism of algebraic stacks $\varphi_-\colon \cM(\frac{1}{2}-\epsilon) \to \cM_4$ induced by $(Y, H)\mapsto Y\cap H$. The morphism $\varphi_-$ descends to a projective morphism $\phi_-\colon M(\frac{1}{2}-\epsilon) \to \obfM_4$.
\end{proposition}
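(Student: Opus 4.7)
The plan is to deduce this proposition directly from Proposition \ref{prop:max-wall-moduli} together with the general formalism of VGIT wall-crossing recalled earlier in Section \ref{ss-VGIT}. Since $t_k = \frac{1}{2}$ is the maximal VGIT wall by Lemma \ref{lem:max-wall}, the standard variation of GIT (see e.g.\ \cite{Tha96, DH98}) yields an open inclusion of semistable loci
\[
\cP^{\rm ss}(\tfrac{1}{2}-\epsilon) \subset \cP^{\rm ss}(\tfrac{1}{2}),
\]
which is $\SL_7$-equivariant and therefore descends to an open immersion of quotient stacks $\cM(\frac{1}{2}-\epsilon) \hookrightarrow \cM(\frac{1}{2})$. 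I would then define $\varphi_-$ as the composition
\[
\varphi_-\colon \cM(\tfrac{1}{2}-\epsilon) \hookrightarrow \cM(\tfrac{1}{2}) \xrightarrow{\varphi} \cM_4,
\]
where $\varphi$ is the smooth morphism produced in Proposition \ref{prop:max-wall-moduli}. On closed points this composition sends $(Y,H)$ to $Y\cap H$ as required, and since open immersions are smooth, $\varphi_-$ is smooth as a composition of two smooth morphisms.

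For the descent to good moduli spaces, I would invoke \cite[Theorem 6.6]{alp} to see that $\varphi_-$ descends to a morphism $\phi_-\colon M(\frac{1}{2}-\epsilon) \to \obfM_4$ which fits into the commutative diagram
\[
\begin{tikzcd}
\cM(\frac{1}{2}-\epsilon) \arrow[hookrightarrow]{r} \arrow{d}{} & \cM(\frac{1}{2}) \arrow{d}{} \arrow{r}{\varphi} & \cM_4 \arrow{d}{} \\
M(\frac{1}{2}-\epsilon) \arrow{r} & M(\frac{1}{2}) \arrow{r}{\phi}[swap]{\cong} & \obfM_4.
\end{tikzcd}
\]
The bottom-left arrow is the wall-crossing morphism between GIT quotient spaces, which is projective by the general VGIT wall-crossing formalism recalled in Section \ref{ss-VGIT}, and the bottom-right arrow $\phi$ is an isomorphism by Proposition \ref{prop:max-wall-moduli}. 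Hence $\phi_-$ is projective as the composition of a projective morphism and an isomorphism.

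There is no real obstacle here: once Proposition \ref{prop:max-wall-moduli} is in hand, the entire argument reduces to inserting an open immersion coming from the VGIT wall-crossing at $t = \frac{1}{2}$ and composing. The only points requiring any care are (i) verifying that this is indeed the relevant side of the wall so that $\cP^{\rm ss}(\tfrac{1}{2}-\epsilon) \subset \cP^{\rm ss}(\tfrac{1}{2})$ (which is automatic since $\frac{1}{2}$ is the last wall and $\cP^{\rm ss}(t) = \emptyset$ for $t > \frac{1}{2}$ by Lemma \ref{lem:max-wall}), and (ii) checking that the identifications are compatible at the level of good moduli spaces, which follows from the universal property of good moduli spaces.
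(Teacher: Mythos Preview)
Your proof is correct and follows essentially the same approach as the paper: compose the open immersion $\cM(\frac{1}{2}-\epsilon)\hookrightarrow \cM(\frac{1}{2})$ coming from VGIT with the smooth morphism $\varphi$ of Proposition \ref{prop:max-wall-moduli}, then descend via \cite[Theorem 6.6]{alp}. Your argument is in fact slightly more explicit than the paper's in justifying projectivity of $\phi_-$ through the wall-crossing diagram and the isomorphism $\phi$; the only minor remark is that the inclusion $\cP^{\rm ss}(\frac{1}{2}-\epsilon)\subset \cP^{\rm ss}(\frac{1}{2})$ holds at any VGIT wall by the general theory, so your justification in (i) via $\frac{1}{2}$ being the last wall is not needed.
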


\begin{proof}
From the theory of VGIT, there is an open immersion $\iota\colon \cM(\frac{1}{2}-\epsilon)\hookrightarrow \cM(\frac{1}{2})$. Thus by Proposition \ref{prop:max-wall-moduli} the composition $\varphi_-=\varphi\circ \iota\colon \cM(\frac{1}{2}-\epsilon) \to \cM_4$ is a morphism of algebraic stacks, which is smooth by the smoothness of $\varphi$ in Proposition \ref{prop:max-wall-moduli}. Applying \cite[Theorem 6.6]{alp}, $\varphi_-$ descends to a morphism $\phi_-$ between good moduli spaces.
\end{proof}

The next proposition gives a precise description of what happens to the change of stability when we cross the wall $t_k=\frac{1}{2}$, over stable points of $\cM_4$.

\begin{proposition}\label{prop:before-last-wall}
Suppose $(Y, H)\in \cP$ satisfies that $X:=Y\cap H$ a GIT stable cubic fourfold in $H\cong \bP^5$. Then the following are equivalent for $0<\epsilon \ll 1$.
\begin{enumerate}
    \item $(Y,H)$ is $(\frac{1}{2}-\epsilon)$-GIT stable;
    \item $(Y,H)$ is $(\frac{1}{2}-\epsilon)$-GIT semistable;
    \item $Y$ is not a projective cone over $X$. 
\end{enumerate}
\end{proposition}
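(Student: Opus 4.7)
The implication $(1)\Rightarrow(2)$ is immediate from definitions. I will prove the contrapositive of $(2)\Rightarrow(3)$ and the direction $(3)\Rightarrow(1)$ via the Hilbert--Mumford numerical criterion.

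For the contrapositive of $(2)\Rightarrow(3)$, suppose $Y$ is a projective cone over $X$. Choose coordinates so that $H=(x_0=0)$ and $Y=(f(x_1,\ldots,x_6)=0)$ with $X=(f=0)\cap H$. Consider the $1$-PS $\lambda(t)=\mathrm{diag}(t^6,t^{-1},\ldots,t^{-1})$ of $\SL_7$. Since $f$ and $x_0$ are both $\lambda$-eigenvectors, a direct Hilbert--Mumford computation yields
\[
\mu^{\cO(a,b)}((Y,H),\lambda)=-3a+6b=3a(2t-1),
\]
which is strictly negative at $t=\tfrac{1}{2}-\epsilon$. Hence $(Y,H)$ is $(\tfrac{1}{2}-\epsilon)$-GIT unstable.

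For $(3)\Rightarrow(1)$, I argue by contradiction. Assume $X$ is GIT stable and $Y$ is not a cone over $X$. By Lemma \ref{lem:max-wall}, $(Y,H)$ is $\tfrac{1}{2}$-GIT semistable. Suppose there exists a non-trivial $1$-PS $\lambda$ of $\SL_7$ with $\mu^{\cO(a,b)}((Y,H),\lambda)\leq 0$ at $t=\tfrac{1}{2}-\epsilon$. Writing $\mu_Y:=\mu([Y],\lambda)$ and $\mu_H:=\mu([H],\lambda)$, the semistability at $t=\tfrac{1}{2}$ gives $\mu_Y+\tfrac{1}{2}\mu_H\geq 0$, while the assumption gives $\mu_Y+(\tfrac{1}{2}-\epsilon)\mu_H\leq 0$; subtracting yields $\mu_H\geq 0$.

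Replacing $\lambda$ by a suitable conjugate (reducing via the $\lambda$-invariant hyperplane $\lim_{t\to 0}\lambda(t)\cdot H$), I assume $\lambda$ preserves $H$. Then $\lambda|_H$ induces, after trace normalization, a $1$-PS $\tilde\lambda$ of $\SL(H)\cong\SL_6$. Comparing the weights of the monomials of $f$ with those of $\bar f:=f|_{x_0=0}$ (which defines $X$) yields the inequality
\[
\mu^{\SL_6}([X],\tilde\lambda)\leq \mu_Y+\tfrac{1}{2}\mu_H.
\]
If $\tilde\lambda$ is non-trivial, the GIT stability of $X$ forces $\mu^{\SL_6}([X],\tilde\lambda)>0$, hence $\mu_Y+\tfrac{1}{2}\mu_H>0$, which contradicts $\mu_Y+(\tfrac{1}{2}-\epsilon)\mu_H\leq 0$ for $\epsilon$ sufficiently small (uniformly, by the boundedness of the relevant destabilizing $1$-PS up to rescaling). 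If $\tilde\lambda$ is trivial, then $\lambda$ must be proportional to the $1$-PS $\mathrm{diag}(t^6,t^{-1},\ldots,t^{-1})$ from the contrapositive step; the explicit computation there forces $Y$ to be a cone over $X$ in order to satisfy $\mu^{\cO(a,b)}\leq 0$, contradicting the assumption. The main obstacle is the reduction to $\lambda$ preserving $H$, together with the uniform control of the $\epsilon$-parameter across all relevant $1$-PS.
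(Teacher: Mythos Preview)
Your treatment of $(1)\Rightarrow(2)$ and the contrapositive of $(2)\Rightarrow(3)$ is fine and matches the paper. The problems are in $(3)\Rightarrow(1)$, and the two obstacles you flag at the end are genuine, not merely technical.

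The step ``replacing $\lambda$ by a suitable conjugate \dots\ I assume $\lambda$ preserves $H$'' does not work. Conjugation sends $\mu((Y,H),\lambda)$ to $\mu(g\cdot(Y,H),g\lambda g^{-1})$; you cannot change $\lambda$ while keeping $(Y,H)$ fixed. Passing to the limit hyperplane $H_0=\lim_{t\to 0}\lambda(t)\cdot H$ changes the point to $(Y_0,H_0)$, and there is no a priori reason this lies in the orbit of $(Y,H)$. Consequently the inequality $\mu^{\SL_6}([X],\tilde\lambda)\le \mu_Y+\tfrac12\mu_H$ has no clear meaning, since $X=Y\cap H$ need not be preserved by $\lambda$ at all. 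The uniform-$\epsilon$ issue is likewise unresolved: boundedness of normalized one-parameter subgroups does not by itself give a positive lower bound for $\mu_Y+\tfrac12\mu_H$ uniform in $(Y,H)$.

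The paper avoids both problems by exploiting the VGIT wall structure rather than an ad hoc Hilbert--Mumford argument. It first proves $(c)\Rightarrow(b)$: if $(Y,H)$ were $(\tfrac12-\epsilon)$-unstable but $\tfrac12$-semistable, a lemma from \cite{ADL20} produces a one-parameter subgroup $\rho$ with $\mu^{\cO(1,1/2)}((Y,H),\rho)=0$ and $\mu^{\cO(1,1/2-\epsilon)}((Y,H),\rho)<0$. The equality at the wall forces the limit $(Y_0,H_0)$ to be $\tfrac12$-semistable, hence $X_0=Y_0\cap H_0$ is a GIT semistable cubic fourfold; since $X$ specializes to $X_0$ and $X$ is GIT stable, one gets $X_0\cong X$. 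Finiteness of $\Aut(X)$ then forces $\rho$ to act trivially on $H_0$, so $\rho$ is a rescaling of the diagonal $\sigma$ and a direct weight computation shows $Y$ would be a cone. This replaces your attempted reduction: rather than arranging $\lambda$ to preserve $H$, the paper shows that the \emph{limit} hyperplane is automatically fixed, with the key input being that $X_0\cong X$ has finite automorphisms. The implication $(b)\Rightarrow(a)$ is then handled separately by passing to the polystable degeneration $(Y',H')$ and using the same finiteness argument to show $\Aut(Y',H')$ is finite. The uniform $\epsilon$ comes for free from the finiteness of VGIT walls.
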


\begin{proof}
Clearly (a) implies (b). We first show that (b) implies (c). Suppose $Y$ is a projective cone over $X$. It suffices to show that $(Y,H)$ is $(\frac{1}{2}-\epsilon)$-GIT unstable. Let $L:=\cO_{\cP}(1,\frac{1}{2}-\epsilon)$ be the $\bQ$-linearization on $\cP$. In a suitable projective coordinate, we may write 
\[Y = (f_3(x_1, \cdots, x_6)=0)\mbox{ \ \ and \ \ } H = (x_0 = 0) \,.\] Let $\sigma$ be the $1$-PS of $\SL_7$ of weight $(-6, 1,\cdots, 1)$. Then we have 
\[
\mu^L((Y,H), \sigma^{-1}) = \mu^{\cO(1)}(Y, \sigma^{-1}) + (\tfrac{1}{2}-\epsilon)\mu^{\cO(1)}(H, \sigma^{-1}) = - 3 + (\tfrac{1}{2}-\epsilon)\cdot 6 = -6\epsilon<0.
\]
Therefore, $(Y, H)$ is $(\frac{1}{2}-\epsilon)$-GIT unstable.

Next, we show that (c) implies (b). Suppose $Y$ is not a projective cone over $X$. Since $X$ is GIT stable, by Lemma \ref{lem:max-wall} we know that $(Y, H)$ is $\frac{1}{2}$-GIT semistable. Assume to the contrary that (b) fails, then by \cite[Lemma 3.7]{ADL20} there exists a $1$-PS $\rho$ of $\SL_7$ such that 
\begin{equation}\label{eq:vgit-wt}
    \mu^{L}((Y,H), \rho) <0 \quad\textrm{and}\quad \mu^{\cO_{\cP}(1, \frac{1}{2})}((Y,H), \rho)=0.
\end{equation}
Since $(Y,H)$ is $\frac{1}{2}$-GIT semistable, the  equation in \eqref{eq:vgit-wt} and \cite[Lemma 2.4(1)]{ADL20} (cf.~\cite{Kem78}) imply that the limit $(Y_0, H_0):=\lim_{t\to 0}\rho(t)\cdot (Y,H)$ is also $\frac{1}{2}$-GIT semistable. Again, by Lemma \ref{lem:max-wall} we obtain that $X_0:=Y_0\cap H_0$ is a GIT semistable cubic fourfold. Since $X$ isotrivially degenerates to $X_0$, we know that $X\cong X_0$ by GIT stability of $X$ and GIT semistability of $X_0$. Thus $\Aut(X_0)\cong \Aut(X)$ is finite, which implies that the $\rho$-action on $H_0$ is trivial. After a suitable change of coordinates, we may assume that $H_0 = (x_0 = 0)$ and $\rho$ is a (positive or negative) rescaling of $\sigma$. Then we get $Y_0 = (f_3(x_1, \cdots, x_6) = 0)$. Since $Y_0 = \lim_{t\to 0} \rho(t) \cdot Y$, we see that $f_3$ is the maximal $\rho$-weight term in the equation of $Y$. On the other hand, we have 
\[
\mu^L((Y, H), \sigma) = \mu^L((Y_0, H_0), \sigma) = 6\epsilon >0. 
\]
This combined with the inequality of \eqref{eq:vgit-wt} implies that $\rho$ is a negative rescaling of $\sigma$. Thus $f_3$ is the minimal $\sigma$-weight term in the equation of $Y$. Since every monomial containing $x_0$ has a smaller $\sigma$-weight than $f_3$, we have that $$Y = (f_3(x_1, \cdots, x_6) = 0)$$ is a projective cone over $X$, a contradiction.

Finally, we show that (b) implies (a). Suppose $(Y, H)$ is $(\frac{1}{2}-\epsilon)$-GIT semistable. Let $(Y', H')$ be a $(\frac{1}{2}-\epsilon)$-GIT polystable degeneration of $(Y,H)$. We claim that $\Aut(Y', H')$ is finite. Assume to the contrary, then there exists a non-trivial $1$-PS $\rho'$ of $\SL_7$ preserving $(Y', H')$. The VGIT wall-crossing implies that $(Y',H')$ is $\frac{1}{2}$-GIT semistable. By Lemma \ref{lem:max-wall} we know that $X':= Y'\cap H'$ is a GIT semistable cubic fourfold. Since $(Y', H')$ is an isotrivial degeneration of $(Y,H)$, we know that $X'$ is also an isotrivial degeneration of $X$. Therefore, by GIT stability of $X$ and GIT semistability of $X'$ we have $X\cong X'$. Thus $\rho'$ acts trivially on $H'$ as $\Aut(X')\cong \Aut(X)$ is finite. Then after a suitable change of coordinates, we may assume that \[H' = (x_0 = 0), \ \ \ X' = (f_3(x_1, \cdots, x_6) = 0)\cap H'\,,\] 
and $\rho$ is a non-trivial rescaling of $\sigma$. Since $Y'$ is $\rho$-invariant, we conclude that $Y' = (f_3 = 0)$ is a projective cone over $X'$, a contradiction of the fact that (b) implies (c). Given the claim, we have that $(Y', H')$ is $(\frac{1}{2}-\epsilon)$-GIT stable, which implies that $(Y, H) \cong (Y' H')$ is also $(\frac{1}{2}-\epsilon)$-GIT stable.
\end{proof}

\subsection{The moduli space of cubic fivefolds containing a fixed cubic fourfold}\label{subsec-moduli-space}

In this subsection, we provide an explicit projective model of the moduli space of cubic fivefolds containing a fixed generic cubic fourfold. More precisely, we are going to prove the following result.

\begin{theorem}\label{thm:wall-crossing-fiber}
Let $[X]\in \cM_4$ be a GIT stable cubic fourfold with trivial automorphism group. Then the fiber $\varphi_-^{-1}([X])$ (resp.~$\phi_-^{-1}([X])$) is isomorphic to the weighted projective stack (resp.~weighted projective space) of dimension $21$ with weight $(1^{15}, 2^6, 3)$. 
\end{theorem}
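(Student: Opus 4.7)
The plan is to realize $\varphi_-^{-1}([X])$ as an explicit linear quotient stack and read off the weights. I would first fix a representative of $[X]$ with $H_0 = (x_0=0)\subset \bP^6$ and $X = (f_3 = 0)\subset H_0$, where $f_3\in \rH^0(\bP^5, \cO(3))$ is a chosen defining equation of $X$. Any pair $(Y,H)\in \cP$ with $Y\cap H\cong X$ is $\PGL_7$-equivalent to one with $(H, Y\cap H) = (H_0, X)$, and such a cubic has a unique equation $F = x_0 f_2 + f_3$ with $f_2\in \rH^0(\bP^6, \cO_{\bP^6}(2))\cong \CC^{28}$. By Proposition~\ref{prop:before-last-wall}, $(\tfrac{1}{2}-\epsilon)$-GIT stability of $(Y,H_0)$ is equivalent to $Y$ not being a projective cone over $X$. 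Hence $\varphi_-^{-1}([X]) \cong [(\CC^{28}\setminus Z)/S]$, where $S := \Stab_{\PGL_7}(H_0, X)$ and $Z\subset \CC^{28}$ is the locus of cones over $X$.

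Next, I would identify $S$ explicitly. Because $\Aut(X) = 1$, a matrix computation gives $S \cong \CC^* \ltimes U$, where the unipotent radical $U\cong \CC^6$ consists of the shears $x_i\mapsto x_i + v_i x_0$ ($i\geq 1$), and the Levi $\CC^*$ rescales $x_0$. Decomposing $f_2 = q + x_0\ell + c\,x_0^2$ with $q\in V_Q := \rH^0(\bP^5, \cO(2))$ (dimension $21$), $\ell$ linear (dimension $6$), and $c\in \CC$, a direct substitution in $F$ yields (after possibly inverting the $\CC^*$-parameter so the weights become positive) that $\CC^*$ acts on $(q,\ell,c)$ with weights $(1,2,3)$ and $U$ acts by
\begin{align*}
q & \mapsto q + (v\cdot\nabla f_3),\\
\ell & \mapsto \ell + (v\cdot\nabla q) + \tfrac{1}{2}(v\cdot\nabla)^2 f_3,\\
c & \mapsto c + \tfrac{1}{2}(v\cdot\nabla)^2 q + (v\cdot\nabla \ell) + \tfrac{1}{6}(v\cdot\nabla)^3 f_3.
\end{align*}
In particular the $U$-action is triangular with respect to the $x_0$-degree grading. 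Moreover $Z = U\cdot 0$, since the cone over $X$ with apex $[1:v_1:\cdots:v_6]\notin H_0$ is precisely $u_v\cdot \{f_3 = 0\}$.

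Because $\Aut(X) = 1$ forces $X$ not to be a cone, the partials $\partial_1 f_3, \ldots, \partial_6 f_3$ are linearly independent in $V_Q$ and span a $\CC^*$-invariant subspace $V_3\subset V_Q$ of dimension $6$. Pick a vector-space complement $V_Q' \subset V_Q$ of dimension $15$, and set $N := V_Q' \oplus \CC^6 \oplus \CC \subset \CC^{28}$: this is a $\CC^*$-invariant $22$-dimensional linear subspace on which $\CC^*$ acts with weights $(1^{15}, 2^6, 3)$. The key step is to verify that the multiplication map $\Phi\colon U\times N \to \CC^{28}$, $(u_v, n)\mapsto u_v\cdot n$, is an algebraic isomorphism. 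For bijectivity, given $(q, \ell, c)$ with $q = q_3 + q'$, $q_3\in V_3$, $q'\in V_Q'$, the condition $u_v^{-1}\cdot(q, \ell, c)\in N$ reduces, after projection to $V_3$, to the linear equation $(v\cdot\nabla f_3) = q_3$, which has a unique solution $v$ since $v\mapsto (v\cdot\nabla f_3)$ is an isomorphism $\CC^6\xrightarrow{\sim} V_3$; the triangular structure of the $U$-action then automatically places the $\ell$- and $c$-components of $u_v^{-1}\cdot(q,\ell,c)$ in $\CC^6$ and $\CC$ respectively. Since $\Phi$ is a bijective morphism of smooth affine varieties of the same dimension, it is an isomorphism.

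Thus $\CC^{28}/U \cong N$ equivariantly, and $(\CC^{28}\setminus Z)/U \cong N\setminus\{0\}$. Taking the residual $\CC^*$-quotient yields $\varphi_-^{-1}([X]) \cong [(N\setminus\{0\})/\CC^*] = \bP(1^{15}, 2^6, 3)$ as a weighted projective stack. Passing to good moduli spaces---and using that $\{[X]\}\subset \cM_4$ has trivial inertia since $\Aut(X) = 1$---then yields $\phi_-^{-1}([X]) \cong \bP(1^{15}, 2^6, 3)$ as a variety. The main obstacle is the globality of $\Phi$: étale-locally near $0\in\CC^{28}$ this is Luna's slice theorem for the $S$-action (whose stabilizer at $0$ is the Levi $\CC^*$), but the global algebraic isomorphism---and hence the clean weighted projective stack description---rests on the triangular/solvable structure of the $U$-action on the $(q,\ell,c)$-coordinates, which is exactly what the explicit formulas above reveal.
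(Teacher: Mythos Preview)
Your proposal is correct and follows essentially the same approach as the paper: identify the fiber with a quotient $[(\CC^{28}\setminus Z)/S]$ where $S\cong \bG_m\ltimes\bG_a^6$, use linear independence of the partials $\partial_i f_3$ (from $X$ not being a cone) to find a linear slice $N = V_Q'\oplus\CC^6\oplus\CC$ for the free $\bG_a^6$-action, and read off the residual $\bG_m$-weights $(1^{15},2^6,3)$. The paper packages the first identification as a separate lemma (your informal ``Hence $\varphi_-^{-1}([X])\cong[(\CC^{28}\setminus Z)/S]$'' is exactly that lemma), and states the slice result more tersely---only the $q$-component of your $U$-action formula is needed, since the $\ell$- and $c$-components land automatically in $\CC^6\oplus\CC$---but the content is identical.
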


From now on, we fix a GIT stable cubic fourfold $X\subset H\cong \PP^5$ such that $\Aut(X)$ is trivial. Under a suitable choice of projective coordinates, we may write $H= (x_0=0)$ and a homogeneous cubic polynomial $f_3$ in $x_1,\cdots, x_6$, such that \[X = (f_3(x_1,\cdots, x_6) = 0) \cap H\,.\] We consider all cubic fivefolds in $\PP^6$ containing $X$, which is parameterized by $|\cI_{X/\PP^6}(3)|\cong \PP^{28}$. Denote by $W$ the open locus of $\PP^{28}$ parameterizing cubic fivefolds $Y$ not containing $H$ such that $Y$ is not isomorphic to a projective cone over $X$. Let $B_X:=[W/G]$ be the corresponding moduli stack of $Y$, where $G$ is the subgroup of $\PGL_7=\Aut(\bP^6)$ that acts trivially on the hyperplane $H\subset \PP^6$.

Using the above results from VGIT, we have:

\begin{lemma}\label{lem-BX}
There is a natural isomorphism $B_X\cong \varphi^{-1}_-([X])$ between Deligne--Mumford stacks.
\end{lemma}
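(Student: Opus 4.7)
The plan is to construct a natural morphism $\Phi \colon B_X = [W/G] \to \varphi_-^{-1}([X])$ and then verify that it is an isomorphism of Deligne--Mumford stacks by checking bijectivity on geometric isomorphism classes together with an identification of automorphism groups.

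First I would construct $\Phi$ at the level of atlases by sending $Y \in W$ to the pair $(Y, H) \in \cP$, where $H$ is the fixed hyperplane. For $Y \in W$ the cubic fourfold $Y \cap H = X$ is GIT stable by hypothesis and $Y$ is not a projective cone over $X$ by the definition of $W$, so Proposition \ref{prop:before-last-wall} shows that $(Y, H)$ is $(\tfrac{1}{2}-\epsilon)$-GIT stable, and in particular lies in $\cP^{\mathrm{ss}}(\tfrac{1}{2}-\epsilon)$. Because $G$ is by definition the subgroup of $\PGL_7$ fixing $H$ pointwise, this map is equivariant for the inclusion $G \hookrightarrow \PGL_7$ (with $G$ acting trivially on the hyperplane factor) and descends to a morphism of stacks $\Phi \colon [W/G] \to \cM(\tfrac{1}{2}-\epsilon)$. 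Since the composition $\varphi_- \circ \Phi$ sends every object to $[X] \in \cM_4$, the morphism $\Phi$ factors through $\varphi_-^{-1}([X])$.

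To prove essential surjectivity on $\mathbb{C}$-points, I would take an arbitrary $(Y', H') \in \cP^{\mathrm{ss}}(\tfrac{1}{2}-\epsilon)$ with $Y' \cap H' \cong X$ and construct a $\PGL_7$-translate lying in $W \times \{H\}$. Transitivity of $\PGL_7$ on hyperplanes in $\PP^6$ lets me replace $H'$ by $H$. Inside the stabilizer of $H$, the induced quotient action realizes $\PGL_6 = \Aut(H)$, so I can further translate the intersection $Y' \cap H$, which is a cubic fourfold in $H$ isomorphic to $X$, to the fixed cubic $X \subset H$. The resulting cubic fivefold $Y''$ satisfies $Y'' \cap H = X$ (in particular $Y'' \not\supset H$), and GIT stability of the translated pair combined with Proposition \ref{prop:before-last-wall} prevents $Y''$ from being a projective cone over $X$, so $Y'' \in W$ and $\Phi([Y''])$ represents $[(Y', H')]$. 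For injectivity, if $g \in \PGL_7$ carries $(Y_1, H)$ to $(Y_2, H)$ with $Y_i \in W$, then $g$ preserves $H$ and sends $X = Y_1 \cap H$ to $X = Y_2 \cap H$; the assumption $\Aut(X) = 1$ then forces $g$ to act trivially on $H$, hence $g \in G$.

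Specialising the last argument to $Y_1 = Y_2 = Y$ yields $\Stab_{\PGL_7}(Y, H) = \Stab_G(Y)$, so the automorphism groups of $[(Y, H)]$ in $\varphi_-^{-1}([X])$ and of $[Y]$ in $B_X$ agree under $\Phi$; both are finite because $(Y, H)$ is GIT stable, so both stacks are Deligne--Mumford, and a bijective morphism of DM stacks that induces isomorphisms on all automorphism groups is itself an isomorphism. The main subtlety I anticipate is the stack-theoretic bookkeeping at this last step: the triviality of $\Aut(X)$ is essential, since without it $\Phi$ would only be an $\Aut(X)$-gerbe rather than an isomorphism.
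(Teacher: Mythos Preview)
Your proposal is correct and follows essentially the same route as the paper: construct the morphism $Y\mapsto (Y,H)$, use Proposition~\ref{prop:before-last-wall} to land in $\cP^{\mathrm{ss}}(\tfrac{1}{2}-\epsilon)$, and then exploit $\Aut(X)=1$ to check bijectivity on geometric points and on stabilizers. The only point where the paper is slightly more careful is the final step: it records that both stacks are smooth Deligne--Mumford and invokes Zariski's Main Theorem for DM stacks, which is the precise statement underlying your assertion that ``a bijective morphism of DM stacks inducing isomorphisms on automorphism groups is an isomorphism'' (a claim that can fail without some regularity hypothesis, e.g.\ for a normalization map).
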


\begin{proof}
First of all, by Propositions \ref{prop:before-last-wall} and \ref{prop:morphism-VGIT}, we know that $\varphi_-^{-1}([X])$ is a smooth Deligne--Mumford stack. Moreover, every 
cubic fivefold $Y$ in $W$ satisfies that $(Y, H)\in \cP^{\rm ss}(\frac{1}{2}-\epsilon)$ and $\Aut(Y,H)$ is finite, which implies that $B_X= [W/G]$ is a smooth Deligne--Mumford stack as well. Thus we have $G$-equivariant immersions
\[W\hookrightarrow \PP(\rH^0(\PP^6, \oh_{\PP^6}(3)))\times \{ [H] \}\hookrightarrow \cP\]
which induces a natural morphism $f\colon B_X\to \varphi^{-1}_-([X])$. Since $\Aut(X)$ is trivial, it is straightforward to check that $f$ induces isomorphisms between stabilizers at geometric points of Deligne--Mumford stacks. In particular, $f$ is representable. Moreover, the triviality of $\Aut(X)$ implies that $f$ is injective at the level of geometric points, and it is also surjective from the construction of $f$ and Proposition \ref{prop:before-last-wall}. Therefore, $f$ is an isomorphism by Zariski’s Main Theorem for Deligne--Mumford stacks.
\end{proof}

Now we can prove Theorem \ref{thm:wall-crossing-fiber}:

\begin{proof}[Proof of Theorem \ref{thm:wall-crossing-fiber}]
By Lemma \ref{lem-BX}, it remains to show that $B_X$ is isomorphic to a weighted projective stack of weight $(1^{15}, 2^6, 3)$. We first notice that $G \cong \bG_a^6 \rtimes \bG_m$, where $\bG_a^6$ acts on $\bP^6$ as 
\[
(a_1, \cdots, a_6)\cdot [x_0, x_1,\cdots, x_6] = [x_0, x_1+a_1 x_0, x_2 + a_2x_0, \cdots, x_6 + a_6 x_0],
\]
and $\bG_m$ acts on $\bP^6$ as \[t\cdot [x_0, x_1,\cdots, x_6] =[tx_0, x_1, x_2, \cdots, x_6]\,.\]
 Next, let $\hW$ be the open locus of $|\cI_{X/\bP^6}(3)|\cong \bP^{28}$ consisting of all cubic fivefolds $Y$ in $\bP^6$ containing $X$ such that $H \not\subset Y$. Since every cubic fivefold $Y$ in $\hW$ is given by the following equation
 \[
    Y = ( f_3(x_1, \cdots, x_6) + f_2(x_1, \cdots, x_6) x_0 + f_1(x_1,\cdots, x_6) x_0^2 + f_0 x_0^3 = 0),  
 \]
 we know that $\hW \cong (\Sym^2 \bC^6) \oplus \bC^6 \oplus \bC$ by associating $Y$ with $(f_2, f_1, f_0)$, where $\bC^6$ represents the linear subspace of $\rH^0(\bP^6, \cO_{\bP^6}(1))$ generated by $\{x_1, \cdots, x_6\}$.
 Then we have $W = \hW \setminus G\cdot [C(X)]$, where $C(X) = (f_3(x_1,\cdots, x_6)=0)\subset \bP^6$ is a projective cone over $X$.

 We claim that the $\bG_a^6$-action on $\hW$ is free. Indeed, it suffices to show that $\bG_a^6$ acts freely on $\Sym^2\bC^6$ which is the parameter space of $f_2$. By computation,  we have 
 \[
 (a_1,\cdots, a_6)\cdot f_2 = f_2 + \sum_{i=1}^6 a_i \pdv{f_3}{x_i}.
 \]
 If there exists $(a_1, \cdots, a_6)\in \bG_a^6\setminus \{0\}$ such that $\sum_{i=1}^6 a_i \pdv{f_3}{x_i} =0$, then after a change of coordinates we can assume that $\pdv{f_3}{x_1}=0$, which implies that $X$ is isomorphic to a projective cone over a cubic threefold and hence GIT unstable. This contradicts the GIT stability of $X$ and the claim is proved. 

 Next, we fix a linear subspace $W_2 \subset \Sym^2 \bC^6$ such that \[W_2\oplus \left\langle \pdv{f_3}{x_i}\right\rangle_{1\leq i\leq 6}= \Sym^2 \bC^6\,.\] In particular,  $\dim_{\bC} W_2 = 15$. From the above discussion, for any $f_2\in \Sym^2 \bC^6$ there exists a unique $g\in \bG_a^6$ such that $g\cdot f_2 \in W_2$. Denote by $\hW'$ the subspace $W_2\oplus\bC^6\oplus \bC$ of $\hW$. Then the claim implies that $[\hW/\bG_a^6]\cong \hW'$. Moreover, since $C(X)$ is $\bG_m$-invariant, we have $G\cdot [C(X)] = \bG_a^6 \cdot [C(X)]$. Hence $W' := \hW'\setminus [C(X)]$ satisfies that \[[W/\bG_a^6]\cong W'\cong (W_2\oplus\bC^6\oplus \bC)\setminus \{0\}\,.\] As a result, by $W_2\cong \bC^{15}$ we have 
 \[
 [W/G]\cong [W'/\bG_m]\cong  [(( \bC^{15}\oplus \bC^6 \oplus \bC)\setminus \{0\})/\bG_m],
 \]
 where the $\bG_m$-action has weight $(1^{15}, 2^{6}, 3)$ on $\bC^{15}\oplus \bC^6 \oplus \bC$. This shows that $B_X \cong [W'/\bG_m]$ is isomorphic to the desired weighted projective stack. Since $\varphi_-^{-1}(X)$ is a Deligne--Mumford stack whose coarse moduli space is $\phi_-^{-1}(X)$, we also have $\phi_-^{-1}(X)\cong \bP(1^{15}, 2^{6}, 3)$.
\end{proof}

In particular, by Lemma \ref{lem-BX} and Theorem \ref{thm:wall-crossing-fiber}, $B_X$ is a weighted projective stack and we have a natural coarse moduli space morphism
\[p\colon B_X\to \PP_X:=\PP(1^{15},2^6,3).\]
We need the next corollary in the later proofs.

\begin{corollary}\label{cor-codim-base}
Let $X$ be a general smooth cubic fourfold with trivial automorphism group.
There exists a smooth open substack $V_n\subset B_X$ parameterizing smooth or $1$-nodal cubic fivefolds such that $V_n$ is represented by a smooth quasi-projective scheme and $$\codim_{B_X}(B_X\setminus V_n)=\codim_{\PP(1^{15},2^6,3)}(\PP(1^{15},2^6,3)\setminus V_n)\geq 2.$$
\end{corollary}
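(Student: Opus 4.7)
The plan is to take $V_n$ to be the intersection of two open substacks of $B_X\cong\PP(1^{15},2^6,3)$: the substack $V_n^{(1)}$ parameterizing cubic fivefolds $Y$ that are either smooth or have a unique singular point which is an ordinary double point, and the substack $V_n^{(2)}$ where the $G$-stabilizer on $W$ is trivial. Both are open, the first by semicontinuity of singularity type in a flat family and the second by upper semicontinuity of stabilizer dimension.

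By the $\gcd$ rule for stabilizers on weighted projective stacks, the non-trivial stabilizer locus of $\PP(1^{15},2^6,3)$ is precisely the closed substack $\PP(2^6,3)$ cut out by the vanishing of all $15$ weight-$1$ coordinates, and this coincides with the singular locus of $\PP(1^{15},2^6,3)$ regarded as a variety. Consequently $\PP(1^{15},2^6,3)\setminus V_n^{(2)}$ has codimension $15$, and on $V_n^{(2)}$ the stack $B_X$ is represented by the smooth locus of $\PP(1^{15},2^6,3)$. Hence $V_n$ will be a smooth quasi-projective scheme, and it remains only to bound the codimension of $\PP(1^{15},2^6,3)\setminus V_n^{(1)}$.

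First, since $X$ is smooth, the equations for a point of $H$ to be singular on some $Y\in W$ reduce to the vanishing of $\nabla f_3$ there, so no such $Y$ has singularities on $H$. For a non-nodal singularity, consider the incidence variety $I_{\mathrm{nn}}=\{(Y,p)\in\PP^{28}\times(\PP^6\setminus H): p\in Y_{\mathrm{sing}},\ \text{Hessian at }p\text{ is degenerate}\}$. For fixed $p\notin H$, the vanishing of $\nabla F$ at $p$ imposes $7$ independent linear conditions on $F\in(\cI_{X/\PP^6})_3\cong\CC^{29}$ (a short direct check in coordinates), and the Hessian degeneracy imposes one further codimension-one condition on the resulting linear system. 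Thus each fibre has dimension at most $20$ in $\PP^{28}$, giving $\dim I_{\mathrm{nn}}\leq 20+6=26$, so its image has codimension $\geq 2$. For the multi-singular locus, consider $I^{(2)}=\{(Y,p,q): p\neq q,\ p,q\in Y_{\mathrm{sing}}\cap(\PP^6\setminus H)\}$; for generic $(p,q)$ the $14$ linear conditions on $F$ are independent, so the fibre has dimension $14$ and the generic part of $I^{(2)}$ has dimension $14+12=26$.

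The main obstacle will be the multi-singular bound, specifically ruling out larger-dimensional contributions from pairs $(p,q)$ where the rank of the $14$ imposed conditions drops. I plan to handle this either by stratifying $(\PP^6\setminus H)^2$ by the rank of the evaluation map and checking directly that each rank-drop stratum of pairs has sufficiently small dimension to compensate for the growth of the fibre, or alternatively by arguing that for general $X$ the linear subsystem $|\cI_{X/\PP^6}(3)|\subset|\cO_{\PP^6}(3)|$ is generic enough that the classical codimension-$2$ locus of cubic fivefolds in $\PP^{83}$ with at least two singular points meets $\PP^{28}$ properly. Both approaches rely on the classical fact that a generic cubic fivefold with two singular points has exactly two nodes, which allows a local deformation-theoretic analysis near such a point.
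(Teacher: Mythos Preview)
Your overall strategy---intersect the ``at most $1$-nodal'' open substack with the locus of trivial stabilizers---is exactly what the paper does, and your identification of the non-scheme locus of $B_X$ with $\PP(2^6,3)\subset\PP(1^{15},2^6,3)$ matches the paper's use of $p^{-1}(\PP(1^{15},2^6,3)_{\reg})$.

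The difference lies in how the codimension of the ``worse than $1$-nodal'' locus is bounded. You attempt a direct incidence computation in the linear system $\PP^{28}=|\cI_{X/\PP^6}(3)|$; this handles the non-nodal stratum cleanly, but for the multi-singular stratum you yourself flag the rank-drop issue for the $14$ linear conditions as unresolved. Your fallback plan~(b)---use the classical codimension-$2$ locus in $\PP^{83}$ and argue that for \emph{general} $X$ the sub-linear-system meets it properly---is precisely the paper's argument, and the paper makes it rigorous without any incidence computation: it cites the classical fact that the at-most-$1$-nodal locus $U_5\subset\PP^{83}$ is big (Huybrechts, Chapter~1, Theorem~2.2), then uses the smoothness of the morphism $\varphi_-^{\circ}\colon\cM(\tfrac12-\epsilon)^{\circ}\to\cM_4^{\circ}$ (already established earlier) to conclude that the pullback $\tU_5$ is big in the total space, hence big in the fiber over a general $[X]$. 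This bypasses the rank-drop analysis entirely and is why the hypothesis reads ``general $X$'' rather than ``any smooth $X$''.

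In short: your plan~(b) is correct and is what the paper does; your plan~(a) would require nontrivial additional work (the stratification by rank of the evaluation map $(\cI_{X/\PP^6})_3\to\CC^{14}$ is not obviously controlled for a fixed $X$), and the paper avoids it.
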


\begin{proof}
By Theorem \ref{thm:wall-crossing-fiber} and Lemma \ref{lem-BX}, we know that $B_X \cong \varphi_-^{-1}([X])$ is a weighted projective stack whose coarse moduli space is isomorphic to $\bP(1^{15}, 2^6, 3)$. Let $\cM_4^{\circ}$ be the open substack of $\cM_4$ parameterizing smooth cubic fourfolds with trivial automorphism group. In particular, $\cM_4^{\circ}$ is represented by a smooth variety $\bfM_4^{\circ}$. Then $$\cM(\tfrac{1}{2}-\epsilon)^{\circ}:=\varphi_-^{-1}(\cM_4^{\circ})$$ is an open substack of $\cM(\frac{1}{2}-\epsilon)$. Hence, we may write $\cM(\frac{1}{2}-\epsilon)^{\circ}= [\cP^{\circ}/\PGL_7]$ for an open subscheme $\cP^{\circ}\subset \cP^{\rm ss}(\frac{1}{2}-\epsilon)$. Moreover, by Propositions \ref{prop:before-last-wall} and \ref{prop:morphism-VGIT} we know that $\cM(\frac{1}{2}-\epsilon)^{\circ}$ is a Deligne--Mumford stack, and the restriction of $\varphi_-$ gives a smooth morphism $\varphi_-^{\circ}\colon\cM(\frac{1}{2}-\epsilon)^{\circ}\to \cM_4^{\circ}$. Therefore, we obtain a smooth morphism $\theta^{\circ}\colon \cP^{\circ} \to \bfM_4^{\circ}$ between smooth varieties that induces $\varphi_-^{\circ}$. 

By \cite[Chapter 1, Theorem 2.2]{huybrechts:book-cubic-hypersurface}, there exists a $\PGL_7$-invariant big open subscheme $$U_5\subset \bP(\rH^0(\bP^6, \cO_{\bP^6}(3)))$$ parameterizing smooth or $1$-nodal cubic fivefolds, i.e.~the complement of $U_5$ has codimension at least $2$. Thus $$\tU_5:=\cP^{\circ}\cap (U_5\times (\bP^6)^*)$$ is a $\PGL_7$-invariant big open subscheme of $\cP^{\circ}$. Since $\theta^{\circ}$ is smooth, we see that for a general $[X]\in \bfM_4^{\circ}$, the intersection $[\tU_5/\PGL_7]\cap \varphi_-^{-1}([X])$ is a big open substack in $ \varphi_-^{-1}([X])$. Denote by $q\colon B_X \to \varphi_-^{-1}([X])$ the isomorphism in Lemma \ref{lem-BX}. As $\bP(1^{15}, 2^6, 3)_{\reg}$ is a big open set in $\bP(1^{15}, 2^6, 3)$, we can take 
\[
V_n:=p^{-1}(\bP(1^{15}, 2^6, 3)_{\reg})\cap q^{-1}([\tU_5/\PGL_7]\cap \varphi_-^{-1}([X])),
\]
so that $V_n$ is an intersection of two big open substacks of $B_X$ and hence big. Since $p^{-1}(\bP(1^{15}, 2^6, 3)_{\reg})$ has trivial stabilizers, we know that $V_n$ is represented by a smooth scheme. Moreover, every cubic fivefold $[Y]\in V_n$ satisfies that $[Y]\in U_5$, i.e.~$Y$ is smooth or $1$-nodal. Thus the proof is finished.
\end{proof}

\subsection{The Jacobian fibration}

Recall that for a smooth projective variety $X$ of dimension $2m-1$, \emph{the (intermediate) Jacobian} of $X$ is a complex torus defined by
\[\mathcal{J}(X):=\frac{\rH^{2m-1}(X,\CC)}{\mathrm{F}^m\rH^{2m-1}(X,\CC)+\rH^{2m-1}(X, \ZZ)},\]
where $\mathrm{F}^m\rH^{2m-1}(X,\CC)\subset \rH^{2m-1}(X,\CC)$ is part of the Hodge filtration. In our case, we consider $X$ to be a smooth cubic fivefold. Since $$h^{0,5}(X)=h^{1,4}(X)=0\,,$$ $\mathcal{J}(X)$ is always a principally polarized abelian variety.

We will use the following lemma.

\begin{lemma}[{\cite[Corollary 3.5]{debarre:GM-jacobian}}]\label{lem-simple-jac}
Let $M$ be a smooth projective variety of dimension $2m$ with $\rH^{2m-1}(M, \QQ)=0$ and $X\hookrightarrow M$ be a very general hyperplane section. Then $\mathrm{End}(\mathcal{J}(X))\cong \ZZ$. In particular, if $\mathcal{J}(X)$ is projective, then it is a simple abelian variety.
\end{lemma}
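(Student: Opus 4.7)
The plan is to deduce the lemma from a monodromy computation on the family of smooth hyperplane sections of $M$, using Deligne's big-monodromy theorem for Lefschetz pencils together with the density of the Noether--Lefschetz locus. The key reduction is the classical identification $\operatorname{End}(\mathcal{J}(X)) \otimes \QQ \cong \operatorname{End}_{\Hdg}(\rH^{2m-1}(X,\QQ))$ of polarized $\QQ$-Hodge structures, so it suffices to show the right-hand side equals $\QQ$ for very general $X$; since $\operatorname{End}(\mathcal{J}(X))$ is a torsion-free $\ZZ$-module containing $1$, this will give $\operatorname{End}(\mathcal{J}(X)) = \ZZ$.

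First I would fix an embedding $M \hookrightarrow \PP^N$ and let $S \subset (\PP^N)^*$ be the open locus parametrizing hyperplanes cutting $M$ in a smooth divisor, with universal family $f\colon \cX \to S$. Applying the Lefschetz hyperplane theorem to $X \hookrightarrow M$, the restriction map $\rH^{2m-1}(M,\QQ) \to \rH^{2m-1}(X,\QQ)$ has image equal to the monodromy-fixed part, whose orthogonal complement under the intersection pairing is the vanishing cohomology $\rH^{2m-1}(X,\QQ)_{\mathrm{van}}$. Since $\rH^{2m-1}(M,\QQ) = 0$ by hypothesis, this forces $\rH^{2m-1}(X,\QQ) = \rH^{2m-1}(X,\QQ)_{\mathrm{van}}$, so the whole cohomology carries a non-degenerate skew-symmetric pairing $Q$ (the weight $2m-1$ is odd) and is spanned by vanishing cycles arising from any Lefschetz pencil through $[X]$.

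Next I would invoke Deligne's theorem on Lefschetz pencils: for a general pencil $\PP^1 \cong L \subset (\PP^N)^*$ through $[X]$, the image of $\pi_1(L \cap S, [X])$ in $\Aut(\rH^{2m-1}(X,\QQ), Q)$ is Zariski-dense in the full symplectic group $\mathrm{Sp}(\rH^{2m-1}(X,\QQ), Q)$. This is proved by combining the Picard--Lefschetz formula (monodromy around a nodal fiber acts as a symplectic transvection in a vanishing cycle) with the fact that the group generated by a $\pi_1$-orbit of transvections along a spanning set of vanishing cycles is Zariski-dense in $\mathrm{Sp}$. Consequently the commutant of the monodromy representation of $\pi_1(S, [X])$ in $\operatorname{End}(\rH^{2m-1}(X,\QQ))$ reduces to $\QQ \cdot \mathrm{id}$.

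To conclude, I would observe that the Noether--Lefschetz locus --- the set of $[X] \in S$ where extra Hodge endomorphisms appear --- is a countable union of proper analytic subvarieties of $S$. For very general $[X]$, every element of $\operatorname{End}_{\Hdg}(\rH^{2m-1}(X,\QQ))$ thus extends to a flat section of the local system $\operatorname{End}(R^{2m-1} f_* \QQ)$ and is therefore monodromy-invariant. Combined with the previous step, this gives $\operatorname{End}_{\Hdg}(\rH^{2m-1}(X,\QQ)) = \QQ$, and hence $\operatorname{End}(\mathcal{J}(X)) = \ZZ$. If moreover $\mathcal{J}(X)$ is projective, then any non-trivial isogeny decomposition $\mathcal{J}(X) \sim A_1 \times A_2$ would produce a non-scalar idempotent in $\operatorname{End}(\mathcal{J}(X)) \otimes \QQ = \QQ$, which is impossible, so $\mathcal{J}(X)$ is simple. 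The main obstacle is the big-monodromy input for Lefschetz pencils; once that and the Hodge-theoretic reduction are granted, the remaining steps are essentially formal.
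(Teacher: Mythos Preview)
The paper does not supply a proof of this lemma; it simply cites \cite[Corollary 3.5]{debarre:GM-jacobian}. Your outline is the standard argument (and essentially the one in that reference): reduce to showing that monodromy-invariant endomorphisms of $\rH^{2m-1}(X,\QQ)$ are scalar, use the hypothesis $\rH^{2m-1}(M,\QQ)=0$ to identify the whole middle cohomology with vanishing cohomology, and then invoke Deligne's big-monodromy theorem for Lefschetz pencils together with the countability of the jumping locus.

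One imprecision worth flagging: the identification $\operatorname{End}(\mathcal{J}(X))\otimes\QQ \cong \operatorname{End}_{\Hdg}(\rH^{2m-1}(X,\QQ))$ is only correct when the Hodge structure on $\rH^{2m-1}(X,\QQ)$ has level one (e.g.\ for cubic fivefolds, where $h^{0,5}=h^{1,4}=0$). In general, endomorphisms of the Griffiths intermediate Jacobian correspond to rational endomorphisms $\varphi$ with $\varphi_\CC(F^m)\subset F^m$, which need not preserve the finer pieces $F^p$. This does not damage your argument: the Noether--Lefschetz step works verbatim with the condition ``$\varphi_\CC(F^m)\subset F^m$'' in place of ``$\varphi$ is a Hodge morphism'', since that condition still cuts out analytic subvarieties in the parameter space, and for very general $[X]$ any such $\varphi$ is forced to be monodromy-invariant, hence scalar by the density of the image in $\mathrm{Sp}$.
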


Now, we set the open subscheme $V_n\subset B_X$ as in Corollary \ref{cor-codim-base} and denote by $V_s\subset V_n$ the locus of smooth cubic fivefolds. Let $\rho\colon \cY\to B_X$ be the universal family. We denote by $\rho_V\colon \cY_V\to V$ the pullback of $\cY\to B_X$ along any morphism $V\to B_X$. 

The construction of the intermediate Jacobian can be given in a family to give a relative Jacobian fibration $\pi_{V_s}\colon\mathcal{J}(\cY_{V_s})\to V_s$ of $\rho_{V_s}\colon \cY_{V_s}\to V_s$. Let 
\[
\mathcal{J}^{\circ}(\cY_{V_n}):=\mathcal H^{2,3}/{ R}^5\rho_{V_n*}(\bZ) \ ,
\]
then by the Picard--Lefschetz theory, ${ R}^5\rho_{V_n*}(\bZ)$ is the natural extension of the local system ${ R}^5\rho_{V_s*}(\bZ) $ on $V_s$, and $\cH^{2,3}$ is the Deligne extension of the Hodge bundle ${ R}^3\rho_{V_s*}(\Omega^2_{\cY_{V_s}/V_s})$
 on $V_s$. 
In particular, we have a smooth morphism 
\[\pi_{0}\colon \mathcal{J}^{\circ}(\cY_{V_n})\to V_n\]
such that the restriction of $\pi_{0}$ over $V_s$ is $\pi_{V_s}$, and the fiber $\cJ^{\circ}(Y)$ over each point $[Y]\in V_n\setminus V_s$ is a $\CC^*$-extension of an abelian variety of dimension $20$. More precisely, for any $[Y]\in V_n\setminus V_s$, we have an exact sequence
\[0\to \CC^*\to \pi_{0}^{-1}([Y])=\cJ^{\circ}(Y)\to \mathcal{J}(Z)\to 0,\]
where $\mathcal{J}(Z)$ is the intermediate Jacobian of the smooth $(2,3)$-complete intersection threefold in $\PP^5$ associated to $Y$ as in \cite[Corollary 1.5.16]{huybrechts:book-cubic-hypersurface} (cf.~\cite[(4.54), (4.56)]{zucker:generalized-jacobian} and \cite[(16.16)]{griffiths:rational-integral-I-II}).

The Lagrangian structure on $\pi_{V_s}\colon \mathcal{J}(\cY_{V_s})\to V_s$ is first observed in \cite{IM08cubic} at the points $[Y]$ when $Y$ is general. In \cite{mark:jac-5fold}, a Hodge theoretic proof is given, which removes the assumption of $Y$ being general. A cycle theoretic explanation is given in \cite[Example 1.5]{lsv}. The proof of the following result is essentially contained in \cite{lsv,IM08cubic,mark:jac-5fold}. For readers' convenience, we give an outline here.

\begin{proposition}\label{prop-smooth5fold}
There exists a holomorphic symplectic structure $\sigma_{V_s}$ on $\mathcal{J}(\cY_{V_s})$ such that each fiber of $\pi_{V_s}\colon \mathcal{J}(\cY_{V_s})\to V_s$ is Lagrangian. Moreover, $\sigma_{V_s}$ extends to a holomorphic $2$-form on any smooth partial compactification of $\mathcal{J}(\cY_{V_s})$.
\end{proposition}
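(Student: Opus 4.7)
The plan is to follow the Hodge-theoretic construction of \cite{IM08cubic, mark:jac-5fold} for the symplectic form and the Lagrangian property, and then use the degeneration analysis of \cite{lsv} for the extension step. First, I would construct $\sigma_{V_s}$ via the infinitesimal variation of Hodge structure (IVHS). At a point $(\alpha,[Y])$ of $\cJ(\cY_{V_s})$ lying above $[Y]\in V_s$ and $\alpha\in \cJ(Y)$, the tangent space splits as $\rH^{2,3}(Y)\oplus T_{V_s,[Y]}$, where the first summand is the tangent space to the fiber $\cJ(Y)$. Composing the Kodaira--Spencer map $T_{V_s,[Y]}\to \rH^1(Y,T_Y)$ with the IVHS cup-product pairing $\rH^1(Y,T_Y)\otimes \rH^{3,2}(Y)\to \rH^{2,3}(Y)$ and Serre duality $\rH^{3,2}(Y)\cong \rH^{2,3}(Y)^{\vee}$ produces a bilinear form on the tangent space that only pairs horizontal with vertical directions. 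The symmetry of the cup product in its first two arguments makes this form closed, giving a holomorphic $2$-form $\sigma_{V_s}$.

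Second, I would verify the symplectic and Lagrangian properties. Each fiber is isotropic by construction (the vertical--vertical pairing vanishes), and since $\dim V_s = \frac{1}{2}\dim \cJ(\cY_{V_s}) = 21$, the fibers are Lagrangian once $\sigma_{V_s}$ is shown to be non-degenerate. Non-degeneracy reduces to showing that the composite $T_{V_s,[Y]}\to \rH^1(Y,T_Y) \to \Hom(\rH^{3,2}(Y),\rH^{2,3}(Y))$ is injective for very general $[Y]$. I would verify this using Griffiths' Jacobian ring description of $\rH^{\bullet,\bullet}(Y)_{\mathrm{prim}}$ combined with the explicit presentation of $T_{V_s,[Y]}$ as cubic deformations of $Y$ preserving the hyperplane section $X = Y\cap H$. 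This amounts to a multiplication statement in the Jacobian ring $R(Y)$ which follows from Macaulay's theorem, exactly as in \cite{IM08cubic, mark:jac-5fold}.

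Third, I would extend $\sigma_{V_s}$ to $\sigma_{V_n}$ on the smooth scheme $\cJ^{\circ}(\cY_{V_n})$ using the Deligne canonical extension of the Hodge bundle $\cH^{2,3}$: over $[Y_0]\in V_n\setminus V_s$, the fiber of $\pi_0$ is a $\CC^*$-extension of the intermediate Jacobian of the associated $(2,3)$-complete intersection threefold, and the Picard--Lefschetz monodromy acts trivially on the $(2,3)$ and $(3,2)$ weight pieces of the limit mixed Hodge structure, so the IVHS pairing extends regularly across the nodal locus. Finally, for an arbitrary smooth partial compactification $\cJ'$ of $\cJ(\cY_{V_s})$, I would take a common log resolution dominating both $\cJ'$ and $\cJ^{\circ}(\cY_{V_n})$ and show that the pullback of $\sigma_{V_n}$ has at worst logarithmic poles along the exceptional divisors with vanishing residues; the vanishing of residues is controlled by the same monodromy analysis. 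A logarithmic $2$-form without residues is holomorphic, so $\sigma_{V_n}$ extends to $\cJ'$. The main obstacle I expect is this final extension step: while the IVHS construction and the Lagrangian property are essentially standard, precisely controlling the boundary behavior of the period pairing on an \emph{arbitrary} smooth partial compactification requires the careful log-geometric argument of \cite{lsv}, adapted here to the setting of relative Jacobians of cubic fivefolds through a fixed cubic fourfold.
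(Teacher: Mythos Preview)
Your construction of $\sigma_{V_s}$ via IVHS and your verification of non-degeneracy through the Jacobian ring are correct and land on essentially the same computation the paper carries out in its Lemma~\ref{lemma-identifyclasssmooth}. The difference lies in the \emph{starting point}: the paper does not build $\sigma_{V_s}$ from the Kodaira--Spencer/cup-product pairing, but instead defines it cycle-theoretically. Using the codimension-$5$ cycle $Z\cong X\times V_s\subset \cY_{V_s}\times X$ and a codimension-$3$ cycle $\cZ\subset \cJ(\cY_{V_s})\times_{V_s}\cY_{V_s}$ inducing the Abel--Jacobi map, one sets $\sigma_{V_s}:=(Z\circ\cZ)^*\eta$ for a generator $\eta\in \rH^{3,1}(X)$. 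The two constructions agree up to scalar, and both reduce the non-degeneracy to the multiplication isomorphism $R^2_{f_t}\xrightarrow{\cdot e} \rH^2(Y_t,\Omega^3_{Y_t})$ in the Jacobian ring.

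Where your proposal falls short is the extension claim. Your steps~(3) and~(4) control the behaviour of $\sigma$ along the \emph{specific} boundary coming from nodal degenerations, i.e.\ along $V_n\setminus V_s$. But the statement asks for extension to an \emph{arbitrary} smooth partial compactification $\cJ'$ of $\cJ(\cY_{V_s})$; such a $\cJ'$ need not sit over $V_n$, and its boundary divisors need not arise from any degeneration of cubic fivefolds at all (they could, for instance, partially compactify the abelian-variety fibers over $V_s$). Your ``common log resolution plus vanishing residues controlled by monodromy'' argument therefore does not apply: the monodromy you analyse is that of the local system $R^5\rho_*\bZ$ around $V_n\setminus V_s$, which says nothing about exceptional divisors lying over $V_s$ or over a compactification of the base orthogonal to the nodal locus. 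This is a genuine gap, not a technicality.

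The paper's cycle-theoretic definition bypasses this entirely. Since $\sigma_{V_s}$ is literally the action of an algebraic correspondence on the holomorphic form $\eta\in \rH^1(X,\Omega_X^3)$ of the \emph{projective} variety $X$, for any smooth partial compactification $\cJ'$ one takes the closure of $Z\circ\cZ$ in $\cJ'\times X$ and applies the correspondence to $\eta$; the result is automatically a global holomorphic $2$-form on $\cJ'$ restricting to $\sigma_{V_s}$. In other words, the extension property is built into the construction and requires no boundary analysis. This is precisely what \cite[Theorem~1.4]{lsv} provides, and it is the reason the paper (and \cite{lsv}) prefer the cycle-theoretic definition over the IVHS one for this kind of statement.
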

\begin{proof}
We first follow the construction mentioned in \cite[Theorem 1.4 and Example 1.5]{lsv}. Let $i\colon X\times V_s\to \cY_{V_s}$ be the natural morphism, then the morphism $X\times V_s\to \cY_{V_s}\times X$ defined by sending $(x,s)$ to $(i(x,s),x)$ is a closed immersion and we get a codimension 5 cycle $Z(\cong X\times V_s)\subset \cY_{V_s}\times X$.  Since $h^{3,1}(X)=1$, we can fix a generator $\eta\in \rH^{1}(X,\Omega_X^3)$.

By \cite[Lemma 1.1]{lsv}, as ${\rm CH}^3(Y)_{\rm hom}\to \cJ(Y)$ is surjective for a smooth cubic fivefold $Y$ (see \cite{col:cubic-5fold}), there is a codimension 3 cycle $\mathcal{Z}\in {\rm CH}^3(\cJ(\cY_{V_s})\times_{V_s} \cY_{V_s})_{\QQ}$, which induces an isomorphism
\[ 
R^{5}\rho_{V_s*} \bQ \to R^1(\pi_{V_s})_*\bQ\, .
\]
So $Z\circ\cZ\in {\rm CH}^3(\cJ(\cY_{V_s})\times X)_{\QQ}$.   Then we define $\eta':=Z^*{\eta}\in \rH^2(\cY_{V_s}, \Omega_{\cY_{V_s}}^4)$ and 
\[
\sigma=\sigma_{V_s}:= (Z\circ\mathcal Z)^*\eta =\cZ^*\eta' \in \rH^0(\cJ(\cY_{V_s}),\Omega^2_{\cY_{V_s}}) \, . 
\]
In particular, from the construction, $\sigma$ extends to a 2-form on any smooth partial compactification of $\cJ(\cY_{V_s})$. For any $t\in V_s$, denote by $Y_t$ and $\cJ_t$ the corresponding cubic fivefold and its intermediate Jacobian.  Since $\rH^{2}(Y_t,\Omega_{Y_t}^4)=0$, $\eta'|_{Y_t}=0$, the fibers of $\pi_{V_s}$ are isotropic with respect to $\sigma$ by \cite[Theorem 1.4]{lsv}.  

To see $\sigma$ is non-degenerate, it suffices to show that for any $t\in V_s$, the map induced by $\sigma$
\[
 \lrcorner{\sigma_t}\colon T_{t,V_{s}}\cong \rH^1(Y_t,T_{Y_t}(-X))\to \rH^2(Y_t,\Omega^3_{Y_t})\cong \rH^0(\cJ_t,\Omega_{\cJ_t}^1) 
 \]
 is an isomorphism. To this end, as in \cite[Theorem 1.2]{lsv}, we consider an exact sequence
\[
\begin{tikzcd}
  0\arrow{r}{} &\Omega_X^3(-1)\arrow{r}{} &  \Omega^4_{Y_t}|_X \arrow{r}{} &\Omega_X^4\arrow{r}{}  & 0 \, .
    \end{tikzcd}
\]
After tensoring $\cO_X(1)$, we get an image class $\eta''\in \rH^1(Y_t,  \Omega^4_{Y_t}(1)|_X)$ of 
$\eta$, which is non-zero as $\rH^0(X,\Omega_X^4(1))=0$.  Then for the exact sequence
\[
\begin{tikzcd}
  0\arrow{r}{} &\Omega_{Y_t}^4\arrow{r}{} &  \Omega^4_{Y_t}(1) \arrow{r}{} &\Omega_{Y_t}^4(1)|_X\arrow{r}{}  & 0 \, ,
    \end{tikzcd}
\]
as $\rH^1(\Omega_{Y_t}^4)=\rH^2(\Omega_{Y_t}^4)=0$, $\eta''$ determines a unique non-zero class $\widetilde{\eta}_t\in \rH^1(Y_t, \Omega^4_{Y_t}(1) )$. As in \cite[Theorem 1.2]{lsv}, we have the following description of $ \lrcorner{\sigma_t}$.

\begin{lemma}
The map $\lrcorner{\sigma_t}\colon T_{t,V_{s}}\cong \rH^1(Y_t,T_{Y_t}(-X))\to \rH^2(Y_t,\Omega^3_{Y_t})$ identifies with the multiplication with the class $\widetilde{\eta}_t$.
\end{lemma}

\begin{proof}
The argument is similar to the proof of \cite[Theorem 1.2]{lsv}. From the exact sequence
\begin{equation}
    0\to \Omega_{V_s,t}\otimes \cO_{Y_t}\to \Omega_{\cY_{V_s}}|_{Y_t}\to \Omega_{Y_t}\to 0,
\end{equation}
we have an exact sequence
\[0\to \Omega_{V_s,t}\otimes \Omega^3_{Y_t}\to \Omega^4_{\cY_{V_s}}|_{Y_t}/L^2\Omega^4_{\cY_{V_s}}|_{Y_t}\to \Omega^{4}_{Y_t}\to 0,\]
where $L^2\Omega^4_{\cY_{V_s}}|_{Y_t}$ is a subbundle of $\Omega^4_{\cY_{V_s}}|_{Y_t}$. Therefore, we get a commutative diagram
\[\begin{tikzcd}
	& 0 & 0 & 0 \\
	0 & {\Omega_{V_s,t}\otimes \Omega^3_{Y_t}} & {\Omega^4_{\cY_{V_s}}|_{Y_t}/L^2\Omega^4_{\cY_{V_s}}|_{Y_t}} & {\Omega^{4}_{Y_t}} & 0 \\
	0 & {\Omega_{V_s,t}\otimes \Omega^3_{Y_t}(X)} & {(\Omega^4_{\cY_{V_s}}|_{Y_t}/L^2\Omega^4_{\cY_{V_s}}|_{Y_t})(X)} & {\Omega^{4}_{Y_t}(X)} & 0 \\
	0 & {\Omega_{V_s,t}\otimes \Omega^3_{Y_t}(X)|_X} & {(\Omega^4_{\cY_{V_s}}|_{X}/L^2\Omega^4_{\cY_{V_s}}|_{X})(X)} & {\Omega^{4}_{Y_t}(X)|_X} & 0 \\
	& 0 & 0 & 0
	\arrow[from=1-2, to=2-2]
	\arrow[from=1-3, to=2-3]
	\arrow[from=1-4, to=2-4]
	\arrow[from=2-1, to=2-2]
	\arrow["{f}", from=2-2, to=2-3]
	\arrow["{g}"', from=2-2, to=3-2]
	\arrow[from=2-3, to=2-4]
	\arrow[from=2-3, to=3-3]
	\arrow[from=2-4, to=2-5]
	\arrow[from=2-4, to=3-4]
	\arrow[from=3-1, to=3-2]
	\arrow[from=3-2, to=3-3]
	\arrow[from=3-2, to=4-2]
	\arrow[from=3-3, to=3-4]
	\arrow[from=3-3, to=4-3]
	\arrow[from=3-4, to=3-5]
	\arrow["{k}", from=3-4, to=4-4]
	\arrow[from=4-1, to=4-2]
	\arrow[from=4-2, to=4-3]
	\arrow[from=4-2, to=5-2]
	\arrow["{h}", from=4-3, to=4-4]
	\arrow[from=4-3, to=5-3]
	\arrow[from=4-4, to=4-5]
	\arrow[from=4-4, to=5-4]
\end{tikzcd}\]
with all columns and rows exact. Since the composition $X\times V_s\cong Z\hookrightarrow \cY_{V_s}\times X\to \cY_{V_s}$ is an embedding of a divisor, then $Z^*$ is given by 
\[Z^*\colon \rH^1(X,\Omega^3_X)\xra{p_X^*}\rH^1(Z,\Omega^3_Z)\to \rH^2(\cY_{V_s}, \Omega^4_{\cY_{V_s}})\]
where $p_X^*\colon Z\to X$ is the projection, and the restriction map to $Y_t$
\[\rH^1(X,\Omega^3_X)\xra{Z^*} \rH^2(\cY_{V_s}, \Omega^4_{\cY_{V_s}})\to \rH^2(Y_t, \Omega^4_{\cY_{V_s}}|_{Y_t})\]
can be factored as
\[\rH^1(X,\Omega^3_X)\xra{p_X^*} \rH^1(Z,\Omega^3_Z)\to \rH^1(X,\Omega^3_Z|_X)\]
\[\xra{\ell_1}\rH^1(X,\Omega^4_{\cY_{V_s}}(X)|_{X})\xra{\delta_1}\rH^2(Y_t, \Omega^4_{\cY_{V_s}}|_{Y_t}).\]
Here, $\ell_1$ is the map $\rH^1(X,\Omega^3_Z|_X)\to \rH^1(X, \Omega^4_{\cY_{V_s}}(X)|_X)$ induced by the exact sequence
\[0\to \Omega^3_{Z}|_X\to \Omega^4_{\cY_{V_s}}(X)|_X\to \Omega^4_{Z}(X)|_X\to 0\]
associated with
\[0\to \oh_X(-X)\to \Omega_{\cY_{V_s}}|_X\to \Omega_{Z}|_X\to 0\]
and $\delta_1$ is the connecting map in the cohomology long exact sequence of 
\[0\to \Omega^4_{\cY_{V_s}}|_{Y_t}\to \Omega^4_{\cY_{V_s}}(X)|_{Y_t}\to \Omega^4_{\cY_{V_s}}(X)|_X\to 0.\]
In particular, the restriction map
\[\ell_2\colon \rH^1(X,\Omega^3_X)\to \rH^2(Y_t, \Omega^4_{\cY_{V_s}}|_{Y_t}/L^2\Omega^4_{\cY_{V_s}}|_{Y_t})\]
factors through
\[ \rH^1(X,\Omega^3_X)\to \rH^1(X, \Omega^4_{\cY_{V_s}}|_{X}/L^2\Omega^4_{\cY_{V_s}}|_{X})\xra{\delta_2} \rH^2(Y_t, \Omega^4_{\cY_{V_s}}|_{Y_t}/L^2\Omega^4_{\cY_{V_s}}|_{Y_t}),\]
where $\delta_2$ is the connecting map in the cohomology long exact sequence of the middle column in the above diagram. Therefore, the class $\eta''\in \rH^1(Y_t,  \Omega^4_{Y_t}(1)|_X)$ constructed above have a lift $\eta'''\in \rH^1(X, \Omega^4_{\cY_{V_s}}|_{X}/L^2\Omega^4_{\cY_{V_s}}|_{X})$ such that  \[\hat{\eta}_t:=\ell_2(\eta)=\delta_2(\eta''')\in \rH^2(Y_t, \Omega^4_{\cY_{V_s}}|_{Y_t}/L^2\Omega^4_{\cY_{V_s}}|_{Y_t}).\] Since $\rH^{4,2}(Y_t)=\rH^{4,1}(Y_t)=0$, the class $\hat{\eta}_t$ can be lifted to a unique class $$\mathrm{int}(\cdot)Z^*\eta\in \rH^2(Y_t, \Omega_{V_s,t}\otimes \Omega^3_{Y_t})=\Hom(T_{V_s,t}, \rH^2(Y_t, \Omega^3_{Y_t}))$$
which is also regarded as a linear map. Then as in the proof of \cite[Theorem 1.2]{lsv}, from the above construction, we have
\[v\lrcorner{\sigma_t}=([\cZ^{2,2}]|_{\cJ(Y_t)\times Y_t})^*(\mathrm{int}(v)Z^*\eta)\]
for any $v\in T_{V_s,t}$, where \[([\cZ^{2,2}]|_{\cJ(Y_t)\times Y_t})^*\colon \rH^2(Y_t, \Omega^3_{Y_t})\to \rH^0(\cJ(Y_t),\Omega_{\cJ(Y_t)})\] is the natural isomorphism induced by $\cZ$. Therefore, to prove this lemma, it suffices to prove that $\mathrm{int}(\cdot)Z^*\eta$ is the same as the multiplication with the class $\widetilde{\eta}_t$. This can be deduced from a diagram chasing in the cohomology long exact sequences associated with the above commutative diagram, since from the construction, we have $k(\widetilde{\eta}_t)=\eta''=h(\eta''')$ and $\mathrm{int}(\cdot)Z^*\eta=f^{-1}(\delta_2(\eta'''))=f^{-1}(\hat{\eta})$, so $g(\mathrm{int}(\cdot)Z^*\eta)=\delta_3(\widetilde{\eta}_t)\in \Omega_{V_s,t}\otimes\rH^2(Y_t, \Omega^3_{Y_t}(X))$, where $$\delta_3\colon \rH^1(Y_t, \Omega^4_{Y_t}(X))\to \Omega_{V_s, t}\otimes \rH^2(Y_t, \Omega_{Y_t}^3(X))$$ is the connecting map associated with the second row in the above diagram. Now the result follows from $\rH^1(X, \Omega^3_{Y_t}(X)|_X)=0$.
\end{proof}

Therefore, the map $\lrcorner{\sigma_t}$, or equivalently, the multiplication with the class $\widetilde{\eta}_t$, is an isomorphism by the following lemma.

\begin{lemma}\label{lemma-identifyclasssmooth}
\begin{enumerate}
\item The class $\widetilde{\eta}_t \in \rH^1(Y_t, \Omega^4_{Y_t}(1) )$ is a non-zero multiple of the extension class $e$ of the normal bundle sequence
\[
\begin{tikzcd}
  0\arrow{r}{} & T_{Y_t} \arrow{r}{} &  T_{\PP^6}|_{Y_t} \arrow{r}{} &\cO_{Y_t}(3)   \arrow{r}{}& 0 \, ,
    \end{tikzcd}
\]
using the natural identification $\Omega^4_{Y_t}(1) \cong T_{Y_t}(-3)$. 
\item The extension class $e$ has the property that the corresponding multiplication map
\[
e\colon \rH^1(Y_t, T_{Y_t}(-1))\to \rH^2(Y_t, \Omega_{Y_t}^3)
\]
is an isomorphism. 
\end{enumerate}
\end{lemma}
\begin{proof}(a) Since both classes are non-zero, the result follows from the fact that $\rH^1(Y_t, \Omega^4_{Y_t}(1))$ is one-dimensional.

(b) We have an exact sequence
\[
\begin{tikzcd}
  \rH^0(Y_t, T_{\PP^6}(-1)|_{Y_t}) \arrow{r}{} & \rH^0(Y_t,\cO_{Y_t}(2))   \arrow{r}{}& \rH^1(Y_t,T_{Y_t} (-1))  \arrow{r}{}& 0 \, ,
    \end{tikzcd}
    \]
    which gives an isomorphism $\rH^1(Y_t,T_{Y_t} (-1)) \cong R^2_{f_t}$, where $R^2_{f_t}$ is the 2nd component of the Jacobian ring of $Y_t$ (see \cite[6.1.3]{voisinbook2}). Then (b) follows from \cite[Corollary 6.12]{voisinbook2}.
\end{proof}
This finishes the proof of Proposition \ref{prop-smooth5fold}.
\end{proof}

Next, we extend the Lagrangian fibration structure over the points parametrizing cubic fivefolds with one nodal point. This was proved in \cite{IM08cubic} for general 1-nodal cubic fivefolds $X$ using the Fano scheme of planes contained in $X$ and the results in \cite{markman:spectral-curve}. We give a direct proof of this, following the calculation in \cite[Section 1.4]{lsv}.

Let $Y_t\subset \PP^6$ be a $1$-nodal cubic fivefold corresponding to a point $t\in V_n\setminus V_s$. We denote by $\widetilde{Y}_t$ and $\widetilde{\PP^6}$ the blow-up of $Y_t$ and $\PP^6$ at the node, respectively. Let $E\subset  \widetilde{\PP^6}$ and $E_{Y_t}\subset \widetilde{Y}_t$ be the exceptional divisors. 

\begin{lemma}\label{lemma-cohomologyopen}
We have the following isomorphisms.
\begin{enumerate}
\item The tangent space $T_{t, V_n}$ of $V_n$ at any point $t\in V_n\setminus V_s$ is isomorphic to 
\[
{\rm Ext}_{Y_t}^1(\Omega_{Y_t},  \cO_{Y_t}(-1))\cong \rH^1(\widetilde{Y}_t, T_{\widetilde{Y}_t}({\log}E_{Y_t})(-1)(2E_{Y_t})) \, ,
\] 
\item and
\[
\rH^2((Y_{t})_{\reg}, \Omega^3_{(Y_{t})_{\reg}}) \cong \rH^2(\widetilde{Y}_t, \Omega^3_{\widetilde{Y}_t}(\log E_t)) \, .
\]
\end{enumerate}
\end{lemma}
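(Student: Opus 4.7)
I would split the two isomorphisms. For the first, the tangent space $T_{t,V_n}$ parametrizes first-order embedded deformations of $Y_t\subset\PP^6$ preserving the intersection $X = Y_t\cap H$, modulo trivial deformations coming from the group $G\cong \bG_a^6\rtimes\bG_m$ identified in the proof of Theorem \ref{thm:wall-crossing-fiber}. Applying $\sheafhom(-,\cO_{Y_t}(-1))$ to the conormal sequence
\[
0\to \cO_{Y_t}(-3)\to \Omega^1_{\PP^6}|_{Y_t}\to \Omega^1_{Y_t}\to 0
\]
yields, since $Y_t$ is l.c.i.\ and $\Omega^1_{\PP^6}|_{Y_t}$ is locally free, the four-term sequence
\[
0\to \sheafhom(\Omega^1_{Y_t},\cO_{Y_t}(-1))\to T_{\PP^6}(-1)|_{Y_t}\to \cO_{Y_t}(2)\to \mathcal{E}xt^1(\Omega^1_{Y_t},\cO_{Y_t}(-1))\to 0,
\]
where the last term is the skyscraper $T^1_{Y_t}$ of length $1$ at the node. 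Taking global sections, using the local-to-global $\Ext$ spectral sequence, and exploiting the vanishings $\rH^i(\PP^6,T_{\PP^6}(-4))=0$ obtained from the Euler sequence, we identify the cokernel of $\rH^0(T_{\PP^6}(-1)|_{Y_t})\to \rH^0(\cO_{Y_t}(2))$ — which is $T_{t,V_n}$ by the description of $B_X$ — with $\Ext^1_{Y_t}(\Omega^1_{Y_t},\cO_{Y_t}(-1))$.

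For the second isomorphism I pass to the blow-up $\pi\colon \widetilde{Y}_t\to Y_t$, observing that $X$ is disjoint from the node so $\pi^*\cO_{Y_t}(-1)=\cO_{\widetilde{Y}_t}(-1)$. The target is to establish a derived isomorphism
\[
R\pi_*\bigl(T_{\widetilde{Y}_t}(\log E_{Y_t})(-1)(2E_{Y_t})\bigr)\cong R\sheafhom_{Y_t}(\Omega^1_{Y_t},\cO_{Y_t}(-1))
\]
by a local calculation at the quadric-cone singularity. Concretely, in a local chart $Y_t\cong\{q=0\}\subset\CC^6$ one has an explicit resolution of $\Omega^1_{Y_t}$, and on the blow-up one can write down generators for $T_{\widetilde{Y}_t}(\log E_{Y_t})$; the twist $(2E_{Y_t})$ is forced by matching the dualized conormal four-term sequence above against the sequence obtained from $T_{\widetilde{Y}_t}(-\log E_{Y_t})\subset T_{\widetilde{Y}_t}$ and the adjunction for $E_{Y_t}$ (whose normal bundle is $\cO_{E_{Y_t}}(-1)$). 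Taking $\rH^1$ finishes (a).

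\textbf{Plan for (b).} The decisive point is that $\pi$ restricts to an isomorphism $\widetilde{Y}_t\setminus E_{Y_t}\xrightarrow{\sim} (Y_t)_{\reg}$. I would apply the Leray spectral sequence to $\pi$ and reduce to the two local claims
\[
\pi_*\Omega^3_{\widetilde{Y}_t}(\log E_{Y_t})\cong \iota_*\Omega^3_{(Y_t)_{\reg}}\qquad\text{and}\qquad R^q\pi_*\Omega^3_{\widetilde{Y}_t}(\log E_{Y_t})=0\ \text{for}\ q\geq 1,
\]
where $\iota\colon (Y_t)_{\reg}\hookrightarrow Y_t$. Both are local around the node and can be checked either via the Poincar\'e residue short exact sequence
\[
0\to \Omega^3_{\widetilde{Y}_t}\to \Omega^3_{\widetilde{Y}_t}(\log E_{Y_t})\to \Omega^2_{E_{Y_t}}\to 0,
\]
together with the vanishing of $R^q\pi_*\Omega^3_{\widetilde{Y}_t}$ for $q\geq 1$ (a consequence of the description of the blow-up of an ODP and standard Grauert–Riemenschneider type results), or directly from the explicit local model. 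The Leray spectral sequence for $\iota$ then yields $\rH^2((Y_t)_{\reg},\Omega^3)\cong \rH^2(Y_t,\iota_*\Omega^3_{(Y_t)_{\reg}})\cong \rH^2(\widetilde{Y}_t,\Omega^3_{\widetilde{Y}_t}(\log E_{Y_t}))$.

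\textbf{Main obstacle.} The core technical work is the local analysis at the ordinary double point for both statements. For (a), pinning down the precise twist $(2E_{Y_t})$ requires matching the local free resolution of $\Omega^1_{Y_t}$ at the quadric-cone singularity with the log tangent sheaf on $\widetilde{Y}_t$ and the normal bundle $\cO_{E_{Y_t}}(-1)$; for (b), proving the two local claims above amounts to checking, in coordinates on the blow-up of $\{q=0\}$, that log three-forms on $\widetilde{Y}_t$ extend the three-forms on $(Y_t)_{\reg}$ without introducing higher direct image contributions. Once these local computations are established, both isomorphisms follow formally.
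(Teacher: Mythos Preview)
Your plan differs substantially from the paper's approach, and part (b) contains a genuine gap.

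The paper handles both isomorphisms by the same device: since $(Y_t)_{\reg}\cong \widetilde{Y}_t\setminus E_{Y_t}$, one writes
\[
\rH^i\bigl((Y_t)_{\reg},\cF|_{(Y_t)_{\reg}}\bigr)\;=\;\varinjlim_{k}\,\rH^i\bigl(\widetilde{Y}_t,\cF(kE_{Y_t})\bigr),
\]
and then shows the direct limit stabilises at the asserted value of $k$: at $k=2$ for $\cF=T_{\widetilde{Y}_t}(\log E_{Y_t})(-1)$, $i=1$ in (a), and at $k=0$ for $\cF=\Omega^3_{\widetilde{Y}_t}(\log E_{Y_t})$, $i=2$ in (b). Stabilisation is checked by proving that $\rH^{i-1}$ and $\rH^i$ of $\cF((k{+}1)E_{Y_t})|_{E_{Y_t}}$ vanish for the relevant $k$, and these are explicit computations on the smooth quadric fourfold $E_{Y_t}\subset\PP^5$ using its normal bundle $\cO_{E_{Y_t}}(-1)$. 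For the first isomorphism in (a), the paper simply quotes $T_{t,V_n}\cong\Ext^1_{Y_t}(\Omega_{Y_t},\cO(-1))\cong\rH^1((Y_t)_{\reg},T_{(Y_t)_{\reg}}(-1))$ from Artin's deformation-theory notes; your derivation via the conormal sequence and the local-to-global $\Ext$ spectral sequence is a legitimate alternative. Your proposed derived isomorphism for the second half of (a) is a different route that might work, but the paper's direct-limit argument is more transparent because it reduces immediately to vanishing on $E_{Y_t}$.

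The error in your plan for (b) is the claim that $R^q\pi_*\Omega^3_{\widetilde{Y}_t}(\log E_{Y_t})=0$ for all $q\ge 1$. The exceptional divisor is a smooth quadric fourfold $Q$ with $h^{2,2}(Q)=2$, so in the residue sequence the term $R^2\pi_*(\Omega^2_{E_{Y_t}})\cong\rH^2(Q,\Omega^2_Q)\cong\CC^2$ is nonzero, and a local computation on $\mathrm{Tot}(\cO_Q(-1))$ shows that $R^2\pi_*\Omega^3_{\widetilde{Y}_t}(\log E_{Y_t})$ is the kernel of the connecting map $\CC^2\to R^3\pi_*\Omega^3_{\widetilde{Y}_t}$ and is therefore nonzero. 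Grauert--Riemenschneider concerns $\omega_{\widetilde{Y}_t}=\Omega^5_{\widetilde{Y}_t}$, not $\Omega^3_{\widetilde{Y}_t}$, so it does not supply the vanishing you need. Consequently your Leray spectral sequence for $\pi$ does not collapse, and the further step ``Leray for $\iota$'' would require independent control of the local cohomology of the reflexive sheaf $\Omega^{[3]}_{Y_t}$ at the node, which you have not addressed either. The paper's direct-limit argument stays on $\widetilde{Y}_t$ throughout and bypasses both difficulties.
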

\begin{proof} (a) By \cite[Lemma 9.1, Lemma 9.2]{Artinnote}, we know 
\[
\rH^1((Y_{t})_{\reg}, T_{(Y_{t})_{\reg}}(-1)) \cong {\rm Ext}_{Y_t}^1(\Omega_{Y_t}, \cO_{Y_t}(-1))=T_{t, V_n} \, .
\]
 So it suffices to show 
\[
\rH^1(\wt{Y}_t, T_{\widetilde{Y}_t}({\log}E_{Y_t})(-1)(2E_{Y_t})) \cong  \rH^1((Y_{t})_{\reg}, T_{(Y_{t})_{\reg}}(-1))  \,.
\]
To this end, we identify the right-hand side as 
\[
\rH^1((Y_{t})_{\reg}, T_{(Y_{t})_{\reg}}(-1))=\varinjlim_{k}\rH^1(\widetilde{Y}_t, T_{\widetilde{Y}_t}({\log}E_{Y_t})(-1)(kE_{Y_t})) \, .
\]
Since $E_{Y_t}$ is a quadratic fourfold in $E\cong \PP^5$ and  $\cO_E(E)|_{E_{Y_t}}=\cO_{E_{Y_t}}(-1)$, then for any $k>0$, we get $\rH^i(E_{Y_t}, \cO_{E_{Y_t}}(kE|_{E_{Y_t}}) ) =0$ for $i\neq 4$. Moreover, we have an exact sequence
\[
 0\to T_{E_{Y_t}}\to T_{\mathbb{P}^5}|_{E_{Y_t}} \to \cO_{E_{Y_t}}(2)\to 0 \, .
\]
Since $\rH^1(E_{Y_t},T_{\mathbb{P}^5}(-k)|_{E_{Y_t}} )=0$ for any $k$ by the restriction of the Euler sequence to $E_{Y_t}$, we get 
\[
\rH^1(E_{Y_t},T_{E_{Y_t}}(-k) )=0 \mbox{ for any }k>2 \, .
\]
Thus for any $k>2$ by using the exact sequence
\[
0\to \cO_{E_{Y_t}}\to T_{Y_t}(\log E_{Y_t})|_{E_{Y_t}} \to T_{E_{Y_t}} \to 0 \,,
\] we conclude that
\[\rH^1(E_{Y_t},T_{Y_t}(\log E_{Y_t})(kE_{Y_t})|_{E_{Y_t}})=0 \,.
\]
Therefore, combining with $\rH^0(E_{Y_t},T_{Y_t}(\log E_{Y_t})(kE_{Y_t})|_{E_{Y_t}})=0$ for $k\ge 2$, we get 
\[
\varinjlim_{k}\rH^1(\widetilde{Y}_t, T_{\widetilde{Y}_t}({\log}E_{Y_t})(-1)(kE_{Y_t}))=\rH^1(\widetilde{Y}_t, T_{\widetilde{Y}_t}({\log}E_{Y_t})(-1)(2E_{Y_t}))\, .
\]

\noindent (b) Similarly, we have
\[
\rH^2((Y_{t})_{\reg}, \Omega^3_{(Y_{t})_{\reg}})= \varinjlim_{k}\rH^1(\widetilde{Y}_t, \Omega^3_{\widetilde{Y}_t} ({\log}E_{Y_t})(kE_{Y_t}))  \, .
\]
There is an exact sequence
\[
0\to \Omega^3_{E_{Y_t}} \to  \Omega^3_{\widetilde{Y}_t} ({\log}E_{Y_t})|_{E_{Y_t}} \to  \Omega^2_{E_{Y_t}}  \to 0 \, .
\]
For any $k<0$, we have 
\[
\rH^1(E_{Y_t}, \Omega^3_{E_{Y_t}}(k))=\rH^1(E_{Y_t}, \Omega^2_{E_{Y_t}}(k))=\rH^2(E_{Y_t}, \Omega^3_{E_{Y_t}}(k))=\rH^2(E_{Y_t}, \Omega^2_{E_{Y_t}}(k))=0 .
\]  
So for any $k>0$,
\[
\rH^1( E_{Y_t}, \Omega^3_{\widetilde{Y}_t} ({\log}E_{Y_t})(kE_{Y_t})|_{E_{Y_t}} )=\rH^2( E_{Y_t}, \Omega^3_{\widetilde{Y}_t} ({\log}E_{Y_t})(kE_{Y_t})|_{E_{Y_t}} )=0 \, .
\]
This implies that for any $k\ge 0$,
\[
\rH^2(\widetilde{Y}_t, \Omega^3_{\widetilde{Y}_t} ({\log}E_{Y_t})(kE_{Y_t}) ) \to \rH^2(\widetilde{Y}_t, \Omega^3_{\widetilde{Y}_t} ({\log}E_{Y_t})((k+1) E_{Y_t}) )
\]
is an isomorphism. 
\end{proof}

\begin{theorem}\label{lem-2-form}
There exists a holomorphic symplectic structure $\sigma_{V_n}$ on $\mathcal{J}^{\circ}(\cY_{V_n})$ such that each fiber of $\pi_0\colon \mathcal{J}^{\circ}(\cY_{V_n})\to V_n$ is Lagrangian. Moreover, $\sigma_{V_n}$ extends to a holomorphic $2$-form on any smooth partial compactification of $\mathcal{J}^{\circ}(\cY_{V_n})$.
\end{theorem}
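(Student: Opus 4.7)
The plan is to construct $\sigma_{V_n}$ as an extension of $\sigma_{V_s}$ and then verify that the extended $2$-form is symplectic with Lagrangian fibers. By Corollary \ref{cor-codim-base}, the total space $\cJ^{\circ}(\cY_{V_n})$ is smooth and contains $\cJ(\cY_{V_s})$ as a dense open subset, hence it is a smooth partial compactification of $\cJ(\cY_{V_s})$. The extension statement of Proposition \ref{prop-smooth5fold} therefore directly produces a holomorphic $2$-form $\sigma_{V_n}$ on $\cJ^{\circ}(\cY_{V_n})$ restricting to $\sigma_{V_s}$. The same reasoning gives the extension to any further smooth partial compactification, settling the last assertion of the theorem.

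For the Lagrangian property, the relative contraction
\[
\sigma_{V_n}|_{T_{\pi_0}\otimes T_{\pi_0}}\colon T_{\pi_0}\otimes T_{\pi_0}\longrightarrow \cO_{\cJ^{\circ}(\cY_{V_n})}
\]
is a morphism of locally free sheaves on a smooth variety which vanishes on the dense open $\cJ(\cY_{V_s})$ by Proposition \ref{prop-smooth5fold}; hence it vanishes identically. Each fiber of $\pi_0$ is therefore isotropic, and since $\dim V_n = 21 = \dim \cJ^{\circ}(Y_t)$ for every $t$, each fiber will automatically be Lagrangian once $\sigma_{V_n}$ is shown to be non-degenerate.

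The main obstacle is the pointwise non-degeneracy of $\sigma_{V_n}$ at the codimension-one boundary locus parametrizing $1$-nodal cubic fivefolds. Since non-degeneracy is an open condition and already holds on $\cJ(\cY_{V_s})$, it suffices to verify it at a single point $x \in \cJ^{\circ}(Y_t)$ for each $t \in V_n \setminus V_s$. Using the $\mathbb{C}^*$-action along the fibers of $\cJ^{\circ}(Y_t)\to \mathcal{J}(Z)$ together with the isotropy already established, the computation reduces to showing that the contraction
\[
\lrcorner \sigma_t\colon T_t V_n \longrightarrow \rH^0(\cJ^{\circ}(Y_t), \Omega^1)
\]
is an isomorphism. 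Lemma \ref{lemma-cohomologyopen} identifies both sides with log-cohomology groups on the blow-up $\wt{Y}_t$, namely $\rH^1(\wt{Y}_t, T_{\wt{Y}_t}(\log E_{Y_t})(-1)(2E_{Y_t}))$ and $\rH^2(\wt{Y}_t, \Omega^3_{\wt{Y}_t}(\log E_{Y_t}))$. Imitating Lemma \ref{lemma-identifyclasssmooth}, I would realize $\lrcorner \sigma_t$ (up to a nonzero scalar) as cup product with the extension class of the log normal sequence
\[
0\to T_{\wt{Y}_t}(\log E_{Y_t})\to T_{\wt{\PP^6}}(\log E)|_{\wt{Y}_t}\to \cO_{\wt{Y}_t}(3H-2E_{Y_t})\to 0,
\]
after the log adjunction identification $\Omega^3_{\wt{Y}_t}(\log E_{Y_t}) \cong T_{\wt{Y}_t}(\log E_{Y_t})\otimes \omega_{\wt{Y}_t}(\log E_{Y_t})$. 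That this cup product is an isomorphism should follow from a Jacobian-ring computation on the log pair $(\wt{\PP^6}, E)$ analogous to \cite[Corollary 6.12]{voisinbook2}, with the new residue contribution along $E_{Y_t}$ pairing the smoothing direction of the node in $V_n$ with the $\mathbb{C}^*$-direction in $\cJ^{\circ}(Y_t)$. Establishing this log-Jacobian ring isomorphism is the main technical hurdle.
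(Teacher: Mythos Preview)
Your setup is essentially identical to the paper's: extend $\sigma_{V_s}$ via Proposition \ref{prop-smooth5fold}, get isotropy by density, reduce non-degeneracy to the contraction $\lrcorner\sigma_t\colon T_tV_n\to \rH^2(\wt{Y}_t,\Omega^3_{\wt{Y}_t}(\log E_{Y_t}))$ using Lemma \ref{lemma-cohomologyopen}, and identify this map (up to a scalar) with cup product by the extension class $e_{Y_t}$ of the log normal sequence
\[
0\to T_{\wt{Y}_t}(\log E_{Y_t})\to T_{\wt{\PP^6}}(\log E)|_{\wt{Y}_t}\to \cO_{\wt{Y}_t}(3)(-2E_{Y_t})\to 0.
\]
All of this matches the paper.

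The divergence is at the last step, which you explicitly flag as the ``main technical hurdle'' and propose to attack via a log Jacobian ring computation on $(\wt{\PP^6},E)$. The paper does \emph{not} do this, and your proposed route is left as a hope rather than an argument. The paper's device is simpler and bypasses any Jacobian ring analysis: take the second exterior power of the log normal sequence,
\[
0\to \wedge^2 T_{\wt{Y}_t}(\log E_{Y_t})\to \wedge^2 T_{\wt{\PP^6}}(\log E)|_{\wt{Y}_t}\to T_{\wt{Y}_t}(\log E_{Y_t})(3)(-2E_{Y_t})\to 0,
\]
and tensor by $\cO_{\wt{Y}_t}(-4)(4E_{Y_t})$. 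The connecting map in the resulting long exact sequence is precisely cup product with $e_{Y_t}$ from $\rH^1(\wt{Y}_t,T_{\wt{Y}_t}(\log E_{Y_t})(-1)(2E_{Y_t}))$ to $\rH^2(\wt{Y}_t,\wedge^2 T_{\wt{Y}_t}(\log E_{Y_t})(-4)(4E_{Y_t}))\cong \rH^2(\wt{Y}_t,\Omega^3_{\wt{Y}_t}(\log E_{Y_t}))$. Injectivity (hence isomorphism, by equidimensionality) then follows from the single vanishing
\[
\rH^1\bigl(\wt{Y}_t,\wedge^2 T_{\wt{\PP^6}}(\log E)(-4)(4E)|_{\wt{Y}_t}\bigr)=0.
\]
So your outline is correct but incomplete; replacing the speculative log Jacobian ring argument with this $\wedge^2$-sequence vanishing closes the gap.
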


\begin{proof}
By Proposition \ref{prop-smooth5fold} and \cite[Theorem 1.4]{lsv}, the holomorphic symplectic structure $\sigma_{V_s}$ on $\cJ(\cY_{V_s})$ extends to a holomorphic $2$-form $\sigma_{V_n}$ on $\cJ^{\circ}(\cY_{V_n})$ such that each fiber of $\pi_0$ is isotropic with respect to $\sigma_{V_n}$ and $\sigma_{V_n}$ extends to a holomorphic $2$-form on any smooth partial compactification of $\mathcal{J}^{\circ}(\cY_{V_n})$. Then we only need to show that $\sigma_{V_n}$ is non-degenerate over any point $t\in V_n\setminus V_s$. The following argument is similar to \cite[Proposition 1.9]{lsv}.

There is an exact sequence
\begin{equation}\label{e-extensionlog}
0\to T_{\widetilde{Y}_t}({\log}E_{Y_t})\to T_{\widetilde{\PP^6}}(\log E)|_{\widetilde{Y}_t} \to \cO_{\widetilde{Y}_t}(3)(-2E_{Y_t}) \to 0 \, .
\end{equation}
The above gives an extension class
\[
e_{Y_t}\in \rH^1(\widetilde{Y}_t ,T_{\widetilde{Y}_t}({\log}E_{Y_t})(2E_{Y_t})(-3))=\rH^1(\widetilde{Y}_t ,\Omega^4_{\widetilde{Y}_t}({\log}E_{Y_t})(-2E_{Y_t})(1) ) \, , 
\]
using
\[
T_{\widetilde{Y}_t}({\log}E_{Y_t})(2E_{Y_t})(-3)  \cong \Omega^4_{\widetilde{Y}_t}({\log}E_{Y_t})(-2E_{Y_t})(1)  \, ,
\]
which follows from the fact that the log canonical class is $\cO_{\widetilde{Y}_t }(K_{\widetilde{Y}_t}(E_{Y_t}))\cong \cO_{\widetilde{Y}_t }(-4)(4E_{\widetilde{Y}_t })$. 

As in \cite[Lemma 1.11]{lsv}, by using Lemma \ref{lemma-cohomologyopen}(b), the operation $\lrcorner{\sigma_t}$ coincides with the multiplication map given by a non-zero multiple of $e_{Y_t}$. Therefore, to show that $\sigma_{V_n}$ is non-degenerate over over $t$, it suffices to check the injectivity of 
\[
 \lrcorner{\sigma_t} \colon T_{t,V_n}\cong \rH^1(\widetilde{Y}_t, T_{\widetilde{Y}_t}({\log}E_{Y_t})(-1)(2E_{Y_t})) \to  \rH^2(\widetilde{Y}_t, \Omega^3_{\widetilde{Y}_t}(\log E_{Y_t}) )
\]
as the latter is isomorphic to the cotangent space of $\cJ^{\circ}(Y_t)$ at any point and spaces on both sides are of the same dimension.
 This is equivalent to the fact that the multiplication map with $e_{Y_t}$
\[
\rH^1(\widetilde{Y}_t, T_{\widetilde{Y}_t}({\log}E_{Y_t})(-1)(2E_{Y_t}))  \to \rH^2(\widetilde{Y}_t, \Omega^3_{\widetilde{Y}_t}(\log E_{Y_t}) )=\rH^2(\widetilde{Y}_t, \wedge^2T_{\widetilde{Y}_t}(\log E_{Y_t})(-4)(4E_{Y_t}) )
\]
is injective. To prove this, by \eqref{e-extensionlog}, we have the exact sequence
\[
0\to \wedge^2 T_{\widetilde{Y}_t}({\log}E_{Y_t})\to  \wedge^2 T_{\widetilde{\PP^6}}(\log E)|_{\widetilde{Y}_t} \to T_{\widetilde{Y}_t}({\log}E_{Y_t}) (3)(-2E_{Y_t}) \to 0 \, .
\]
Tensoring with $\cO_{\widetilde{Y}_t}(-4)(4E_{Y_t})$, $e_{Y_t}$ is given by the connection map
\[
\rH^1(\widetilde{Y}_t, T_{\widetilde{Y}_t}({\log}E_{Y_t}) (-1)(2E_{Y_t}))  \to \rH^2(\widetilde{Y}_t,  \wedge^2 T_{\widetilde{Y}_t}({\log}E_{Y_t})(-4)(4E_{Y_t}))\, .
\]
So to verify it is injective, it suffices to observe the following fact
\[
\rH^1 ( \widetilde{Y}_t, \wedge^2 T_{\widetilde{\PP^6}}(\log E)(-4)(4E)|_{\widetilde{Y}_t} ) =0 \, .
\]
\end{proof}

Now, we can apply Theorem \ref{thm-irr-sym}, and conclude the following:

\begin{theorem}\label{thm-cubic}
Let $X$ be a general cubic fourfold. Then there exists a $\QQ$-factorial, terminal, irreducible symplectic variety $(\overline{\mathcal{J}}, \overline{\sigma})$ with a Lagrangian fibration $$\pi\colon \overline{\mathcal{J}}\to \PP(1^{15},2^6,3)$$ extending $\pi_0\colon \mathcal{J}^{\circ}(\cY_{V_n})\to V_n$ and $\overline{\sigma}|_{\mathcal{J}^{\circ}(\cY_{V_n})}=\sigma_{V_n}$. Moreover, we have $b_2(\overline{\mathcal{J}})\geq 24$.
\end{theorem}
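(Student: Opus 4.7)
The plan is to derive this theorem by invoking Theorem~\ref{thm-irr-sym} applied to $X_0 = \mathcal{J}^{\circ}(\cY_{V_n})$, $\sigma_0 = \sigma_{V_n}$, the extended Jacobian fibration $\pi_0$ from Theorem~\ref{lem-2-form}, and $B = \PP(1^{15},2^6,3)$ (viewing $V_n$ as an open subset of $B$ via Theorem~\ref{thm:wall-crossing-fiber} and Lemma~\ref{lem-BX}). Three hypotheses of Theorem~\ref{thm-irr-sym} require verification: (a) $\codim_B(B\setminus V_n)\ge 2$; (b) $\sigma_0$ extends to a holomorphic $2$-form on some smooth projective compactification of $\pi_0$; and (c) very general fibers of $\pi_0$ are Lagrangian simple abelian varieties. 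The promised lower bound on $b_2$ will then be established separately.

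Hypothesis (a) is exactly Corollary~\ref{cor-codim-base}. For (b), I would apply Hironaka's theorem to produce a smooth projective compactification $\wt X$ of $\mathcal{J}^{\circ}(\cY_{V_n})$ equipped with a morphism $\wt X \to B$ extending $\pi_0$; Theorem~\ref{lem-2-form} then supplies the required extension of $\sigma_{V_n}$ to $\wt X$. For (c), the fibers of $\pi_0$ over the open dense locus $V_s \subset V_n$ are principally polarized abelian varieties of dimension $21$ (as $h^{0,5}(Y) = h^{1,4}(Y) = 0$ for a smooth cubic fivefold $Y$), and these are Lagrangian by Theorem~\ref{lem-2-form}; only simplicity of the very general such fiber remains. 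To establish this I would apply Lemma~\ref{lem-simple-jac} to the smooth projective $6$-fold $M := \Bl_X\PP^6$. By the blow-up formula combined with the Lefschetz hyperplane theorem for the smooth cubic fourfold $X \subset \PP^5$,
\[
\rH^5(M,\QQ) \;=\; \rH^5(\PP^6,\QQ)\oplus \rH^3(X,\QQ) \;=\; 0.
\]
Sections of $\cO_M(3H-E)$, where $E$ is the exceptional divisor, correspond to cubics in $\PP^6$ containing $X$; this linear system is base-point free, so Lemma~\ref{lem-simple-jac} may be applied to its very general members. For any smooth cubic fivefold $Y \supset X$, the equality $X = Y\cap H$ makes $X$ a Cartier divisor in $Y$, so the strict transform $\wt Y \subset M$ is isomorphic to $Y$, whence $\mathcal{J}(\wt Y) \cong \mathcal{J}(Y)$. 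Lemma~\ref{lem-simple-jac} then gives $\mathrm{End}(\mathcal{J}(Y)) \cong \ZZ$ for very general $Y$, so $\mathcal{J}(Y)$ is simple.

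With the hypotheses verified, Theorem~\ref{thm-irr-sym} delivers the $\QQ$-factorial terminal irreducible symplectic variety $(\overline{\mathcal{J}}, \overline\sigma)$ with the Lagrangian fibration $\pi\colon \overline{\mathcal{J}} \to B$ extending $\pi_0$ and satisfying $\overline\sigma|_{\mathcal{J}^{\circ}(\cY_{V_n})} = \sigma_{V_n}$. For the bound $b_2(\overline{\mathcal{J}}) \ge 24$, I would use the observation of \cite{IM08cubic} that $\mathcal{J}(\cY_{V_s}) \to V_s$ is the relative Picard fibration of a natural family of Lagrangian surfaces in the Fano variety of lines $F(X)$, which is an irreducible symplectic $4$-fold of $K3^{[2]}$-deformation type and thus has $b_2(F(X)) = 23$. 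Following Markman's rank-one construction \cite[\S 7]{markman:rank-1-obstruction}, the universal Poincar\'e class yields a Hodge-theoretic injection $\rH^2(F(X), \QQ) \hookrightarrow \rH^2(\overline{\mathcal{J}}, \QQ)$; here one uses crucially that the boundary $\overline{\mathcal{J}} \setminus \mathcal{J}^{\circ}(\cY_{V_n})$ has codimension at least $2$, which follows from equidimensionality of the Lagrangian fibration together with the fact that $\mathcal{J}^{\circ}(Y_t)$ has the generic fiber dimension over the codimension-one nodal locus $V_n \setminus V_s$. The class $\pi^*c_1(\cO_B(1)) \in \rH^2(\overline{\mathcal{J}}, \QQ)$, representing a multiple of the Lagrangian fiber, is linearly independent of the image of this injection; combining them gives $b_2(\overline{\mathcal{J}}) \ge 23 + 1 = 24$.

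The main obstacle is this final Markman-type step: rigorously constructing the injection $\rH^2(F(X), \QQ) \hookrightarrow \rH^2(\overline{\mathcal{J}}, \QQ)$ by extending classes from the universal Poincar\'e construction on $\mathcal{J}(\cY_{V_s})$ across the boundary of the (singular) projective compactification $\overline{\mathcal{J}}$, and verifying the independence of all $24$ classes, constitutes the Hodge-theoretic heart of the lower bound and is the most delicate part of the argument.
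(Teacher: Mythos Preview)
Your verification of the hypotheses of Theorem \ref{thm-irr-sym} is along the right lines, but the simplicity argument has a genuine gap. The line bundle $3H-E$ on $M=\Bl_X\PP^6$ is \emph{not} ample: writing $H_0=\{x_0=0\}$ for the hyperplane containing $X$, its strict transform $H_0'\cong\PP^5$ satisfies $(3H-E)|_{H_0'}\cong\cO_{\PP^5}$ (since $H|_{H_0'}=\cO_{\PP^5}(1)$ while $E\cap H_0'\cong X$ is a cubic in $H_0'$, so $E|_{H_0'}=\cO_{\PP^5}(3)$). Thus $|3H-E|$ contracts $H_0'$, and a general pencil contains the reducible member $H_0'+Q'$, so it is not a Lefschetz pencil. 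Lemma \ref{lem-simple-jac} concerns genuine hyperplane sections and its proof uses Lefschetz monodromy; base-point freeness alone does not suffice. The paper avoids this by applying Lemma \ref{lem-simple-jac} to $\PP^6$ itself: for \emph{very general} $X$, a very general cubic fivefold $Y\supset X$ is very general among all cubic fivefolds, hence has simple Jacobian. The passage from very general $X$ to general $X$ is then handled by a separate spreading argument you omit: one runs a relative MMP over an open $U\subset\bfM_4^\circ$ to produce a family $\ocJ_U\to U$, invokes Namikawa's result that it is locally trivial, and uses that irreducible symplecticity is preserved under locally trivial deformation.

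Your route to $b_2\ge 24$ is entirely different from the paper's, and the part you flag as delicate is indeed where the argument is incomplete. The paper does not attempt to construct an injection from $\rH^2(F(X),\QQ)$; instead it argues by deformation. Using the family $\ocJ_U\to U$ above, one obtains a period map $J\colon\tU\to\Omega_\Lambda$ from the universal cover and shows $\dim\im(J)=20$: if $J$ contracted a curve through a general point, then by the birational Torelli theorem for primitive symplectic varieties two general $\ocJ_X$, $\ocJ_{X'}$ would be birational, and (after matching isotropic line bundles) their Lagrangian fibrations would have isomorphic general fibers, forcing the rational maps $\PP_X\dashrightarrow\bfM_5$ to have the same image for infinitely many $[X]$---contradicting generic finiteness of $M(\tfrac{1}{2}-\epsilon)\dashrightarrow\bfM_5\times\bfM_4$. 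Combined with $\rho(\ocJ_X)\ge 2$, this yields $\dim\Omega_\Lambda\ge 22$ and hence $b_2\ge 24$. Your Markman-style approach is in the spirit of the motivation for the paper, but carrying it out for a possibly singular $\ocJ$ would require substantial additional work.
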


\begin{proof}
We first assume $X$ to be very general.
By Corollary \ref{cor-codim-base}, we have $$\codim_{\PP(1^{15},2^6,3)}(\PP(1^{15},2^6,3)\setminus V_n)\geq 2.$$ Moreover, by Theorem \ref{lem-2-form}, there is a holomorphic symplectic structure $\sigma_{V_n}$ on $\mathcal{J}^{\circ}(\cY_{V_n})$ such that $\mathcal{J}^{\circ}(\cY_{V_n})\to V_n$ is surjective, equidimensional, and fibers are Lagrangian with respect to $\sigma_{V_n}$. Since $X$ is very general, we also know that very general fibers of $\pi_0$ are simple abelian varieties by Lemma \ref{lem-simple-jac}. From Theorem \ref{lem-2-form}, the holomorphic symplectic form $\sigma_{V_n}$ extends to a holomorphic $2$-form on any smooth compactification of $\pi_0$. Finally, by the local Torelli theorem (cf.~\cite{donagi:generic-torelli} or \cite[Section 6.3.2]{voisinbook2}), $\pi_0$ is not isotrivial. Then all assumptions in Theorem \ref{thm-irr-sym} are satisfied and the existence of $(\overline{\mathcal{J}},\pi)$ follows. 

More generally,  we recall that $\mathbf{M}_4$ is the $20$-dimensional moduli space of smooth cubic fourfolds and $\mathbf{M}_4^{\circ}\subset \bfM_4$ is the Zariski open subscheme parameterizing smooth cubic fourfolds with trivial automorphism group. Let $\mathcal{X}\to U$ be a family of smooth cubic fourfolds where $U$ is a dense Zariski open subscheme of $\bfM_4^{\circ}$, such that for any $t\in U$ the cubic fourfold $X_t = \cX\times_U \{t\}$ satisfies the statement of Corollary \ref{cor-codim-base}, and for any very general $u\in U$, the cubic fourfold $X_u = \cX\times_U \{u\}$ satisfies that the construction in the previous paragraph yields an irreducible symplectic variety.
By running a family of minimal model program, after shrinking $U$, we can construct a family $\overline{\mathcal{J}}_U\to U$ such that over each point $t\in U$, $\overline{\mathcal{J}}_t = \overline{\mathcal{J}}_U \times_U \{t\}$ is a $\bQ$-factorial terminal symplectic variety which compactifies the Lagrangian fiberation $\cJ^{\circ}(\cY_{V_n})\to V_n$ constructed as above for $X_t$ (for openness of $\mathbb Q$-factoriality, see \cite[Theorem 12.1.10]{KM-flips}). In particular, by \cite{namikawa:deform-sym}, $\overline{\mathcal{J}}_U\to U$ is locally trivial. Since for a very general $u\in U$, $\overline{\cJ}_{u}$ is an irreducible symplectic variety, this implies for any $t\in U$, $\overline{\cJ}_t$ is an irreducible symplectic variety (cf.~\cite{BL22} or \cite[Corollary 3.10]{bakker:alg-approx}).

It remains to prove $b_2(\overline{\mathcal{J}})\geq 24$. We present two different arguments here. Recall that $\rH^2(\overline{\cJ},\QQ)$ is a pure Hodge structure of weight two with $\rH^{2,0}(\overline{\cJ})=\rH^0(\overline{\cJ},\Omega^{[2]}_{\overline{\cJ}})$ (cf.~\cite[Theorem 8]{schwald:fibration}). Then the correspondence $$\cC\in {\rm CH}^3(\cJ^{\circ}(\cY_{V_n})\times X)_{\QQ}$$ 
extended from $Z\circ\cZ\in {\rm CH}^3(\cJ^{\circ}(\cY_{V_s})\times X)_{\QQ}$ induces a morphism of rational Hodge structures
\[\cC^*\colon \rH^4(X,\QQ)\to \rH^2(\cJ^{\circ}(\cY_{V_n}),\QQ)= \rH^2(\overline{\cJ},\QQ)\]
that maps $\rH^{3,1}(X)=\CC\eta$ into $\rH^{2,0}(\overline{\cJ})=\CC \overline{\sigma}$. Therefore, the transcendental lattice $\rH^4(X,\QQ)_{\mathrm{tr}}$ has nonzero image under $\cC^*$, and the simplicity of the Hodge structure $\rH^4(X,\QQ)_{\mathrm{tr}}$ implies that $\cC^*$ induces an embedding $\rH^4(X,\QQ)_{\mathrm{tr}}\hookrightarrow \rH^2(\overline{\cJ},\QQ)_{\mathrm{tr}}$. When $X$ is very general, we know that $\dim_{\QQ}\rH^4(X,\QQ)_{\mathrm{tr}}=22$, which gives $\dim_{\QQ} \rH^2(\overline{\cJ},\QQ)_{\mathrm{tr}}\geq 22$. Since $\overline{\cJ}$ has Picard number at least $2$, we obtain $b_2(\overline{\mathcal{J}})\geq 24$.

Alternatively, we can also prove $b_2(\overline{\mathcal{J}})\geq 24$ by using the geometry of moduli spaces. For a generic cubic fourfold $X$, we denote by $\overline{\mathcal{J}}_X$ a $\QQ$-factorial terminal irreducible symplectic compactification of the fibration $\mathcal{J}^{\circ}(\cY_{V_n})\to V_n$ associated with $X$ as above. We fix a $\Lambda$-marking of $\overline{\mathcal{J}}_X$, where $\Lambda$ is a lattice, and let $\Omega_{\Lambda}$ be the corresponding period domain (cf.~\cite[Definition 8.1]{BL22}). We denote by $\mathbf{M}_4$ the $20$-dimensional moduli space of smooth cubic fourfolds. Then the existence of $\overline{\mathcal{J}}_X$ defines a rational map
\[J\colon \mathbf{M}_4\dashrightarrow \Omega_{\Lambda}\]
by mapping a general cubic fourfold $[X]\in \mathbf{M}_{4}$ to the corresponding period point of $\overline{\mathcal{J}}_X$. Note that $J$ is well-defined, as any two $\QQ$-factorial terminal irreducible symplectic compactifications of $\mathcal{J}^{\circ}(\cY_{V_n})\to V_n$ are birational, hence have the same period point by \cite[Corollary 6.17]{BL22}.

Let $\mathbf{A}_{21}$ be the moduli space of principally polarized abelian varieties of dimension $21$ and $\mathbf{M}_5$ be the moduli space of smooth cubic fivefolds. Then there is a period map
\[\mathscr{P}\colon \mathbf{M}_5\to \mathbf{A}_{21}\]
given by $\mathscr{P}([Y])=\mathcal{J}(Y)$, which is generically finite onto its image by the local Torelli theorem (cf.~\cite{donagi:generic-torelli} or \cite[Section 6.3.2]{voisinbook2}).

Following the argument in \cite[Section 7.4]{markman:rank-1-obstruction}, it is enough to prove $\dim(\im(J))=20$, i.e.~$J$ is generically finite, as that implies $\dim(\Omega_{\Lambda})\ge 21$, i.e. $b_2(\overline{\cJ})\ge 24$. If there is a curve $C\subset \mathbf{M}_4$ passing through a general point contracted by $J$, then for each pair of general points $[X], [X']\in C$, we have a birational equivalence $\overline{\mathcal{J}}_X\dasharrow \overline{\mathcal{J}}_{X'}$ by \cite[Theorem 6.14, Theorem 1.1(4)]{BL22}.
Since there are at most countably many isotropic line bundles in $\Pic(\overline{\mathcal{J}}_X)\cong \Pic(\overline{\mathcal{J}}_{X'})$, there is an infinite set of points in $C$ such that for $[X]$ and $[X']$ in this set, the following diagram is commutative
\[
\begin{tikzcd}
  \overline{\mathcal{J}}_X \arrow[dashrightarrow]{r}{} \arrow{d}{} & \overline{\mathcal{J}}_{X'}\arrow{d}{}\\
    \PP_X(\cong \PP(1^{15},2^6,3))\arrow[dashrightarrow]{r}{} & \PP_{X'}
\end{tikzcd}
\]
where the vertical morphisms are Lagrangian fibrations constructed above. Thus for a general point $x\in \PP_X$, the fibers of the vertical morphisms are  isomorphic, as birational maps between abelian varieties are isomorphisms.
  Since $\mathscr{P}$ is generically finite, we conclude that the natural rational map $\PP_X\dasharrow \mathbf{M}_5$ has the same image for infinitely many  $[X]\in C$. However, the rational map 
\[
M(\tfrac{1}{2}-\epsilon)\dasharrow \mathbf{M}_5\times \mathbf{M}_4
\]
is generically finite by Proposition \ref{prop:cubic-max-variation}, which is a contradiction. 
\end{proof}

\begin{proposition}\label{prop:cubic-max-variation}
    The rational map $M(\frac{1}{2}-\epsilon)\dashrightarrow \bfM_5\times\bfM_4$ defined by $[(Y, H)]\mapsto ([Y], [Y\cap H])$ is generically finite. 
\end{proposition}

\begin{proof}
Let $M^{\sm}$ be the dense open  subscheme of $M(\frac{1}{2}-\epsilon)$ parameterizing pairs $(Y,H)$ such that both $Y$ and $Y\cap H$ are smooth. Note that $M^{\sm}$ is open as smoothness is an open condition and that each such pair $(Y,H)$ is $(\frac{1}{2}-\epsilon)$-GIT stable by Proposition \ref{prop:before-last-wall}. Then the rational map restricts to a morphism $\Phi\colon M^{\sm} \to \bfM_5\times \bfM_4$. It suffices to show that for a general point $[(Y,H)]\in M^{\sm}$, the set $\Phi^{-1}(\Phi([(Y,H)]))$ is finite. From the definition of the GIT quotient, we may assume that both $Y$ and $H$ are general, and $Y$ has a trivial automorphism group. It is clear that $\Phi_1=\pr_1\circ\Phi\colon M^{\sm} \to \bfM_5$ is the forgetful map $[(Y,H)]\mapsto [Y]$, where $\pr_1\colon \bfM_5\times \bfM_4\to \bfM_5$ is the projection. Then by \cite[Chapter 1, Proposition 5.18]{huybrechts:book-cubic-hypersurface}, we know that the rational map $(\bP^6)^* \dashrightarrow \bfM_4$ defined by $H\mapsto [Y\cap H]$ is generically finite. In particular, $f\colon\Phi_1^{-1}([Y])\to \bfM_4$ defined by $[(Y,H)]\mapsto [Y\cap H]$ is generically finite as  $\Phi_1^{-1}([Y])\subset (\bP^6)^*$ is a dense open subset by Bertini's theorem and the triviality of $\Aut(Y)$. Clearly, $\Phi^{-1}(\Phi([(Y,H)])) = f^{-1}([Y\cap H])$. Since $H$ is a general hyperplane in $\bP^6$, we know that $f^{-1}([Y\cap H])$ is finite, which implies the finiteness of $\Phi^{-1}(\Phi([(Y,H)]))$. The proof is finished.
\end{proof}

\begin{remark}\label{rmk-non-projectivity}
We expect $b_2(\overline{\cJ})=24$, for which one needs to show the locus of $\overline{\cJ}$ has codimension $2$ in the corresponding period domain, in particular, $\rho(\overline{\cJ})=2$ for a very general $X$. This requires that there exists a smooth or at least terminal relatively minimal compactification of $\mathcal{J}^{\circ}(\cY_{V_n})$ over $V_n$ with irreducible fibers over codimension one points. In the case of cubic threefolds, using degenerations of Prym varieties, it is shown that the relative compactified Jacobian $\mathcal{J}(\cY_{V_n})$ gives such a smooth compactification over $V_n$ (cf.~\cite[Lemma 5.2]{lsv}). In this paper, we will not address these issues. 
\end{remark}

\bibliography{symplectic}

\bibliographystyle{alpha}

\end{document}